  \theoremstyle{plain}
  \newtheorem{theorem}{Theorem}
  \theoremstyle{definition}
  \newtheorem{definition}[theorem]{Definition}
  \newtheorem{example}[theorem]{Example}
  \theoremstyle{remark}
\long\global\def\C#1\F{{}}
\def\term#1{\textsl{#1}}
\def\subjindex#1{\index{#1}}
\def\authindex#1{\index{~#1}}
\def\indexedterm#1{\term{#1}\subjindex{#1}}
\newcommand{\N}{\mathbb{N}}
\def\st{{\;\vrule height8pt width1pt depth2.5pt\;}}
\def\BBB{\mathcal{B}}
\def\CCC{\mathcal{C}}
\def\DDD{\mathcal{D}}
\def\FFF{\mathcal{F}}
\def\GGG{\mathcal{G}}
\def\HHH{\mathcal{H}}
\def\III{\mathcal{I}}
\def\LLL{\mathcal{L}}
\def\KKK{\mathcal{K}}
\def\OOO{\mathcal{O}}
\def\PPP{\mathcal{P}}
\def\QQQ{\mathcal{Q}}
\def\SSS{\mathcal{S}}
\def\WWW{\mathcal{W}}
\def\ss{\sigma}
\def\es{\varnothing}
\setlist[enumerate]{leftmargin=25pt,topsep=3pt}
\def\EEX{\null}  
\def\EDE{\null} 
\def\Prob{\mathbb{P}}
\def\aleks{\texttt{ALEKS}}
\def\rath{\texttt{RATH}}
\def\query{\texttt{QUERY}}
\def\Calc{\textbf{Calc}}   
\def\PreCalc{\textbf{Pre-Calc}}
\def\BusCalc{\textbf{BusCalc}}
\def\CalcExp{\textbf{CalcExp}}
\def\EQ{\Longleftrightarrow}
\def\Span{\mathbb S}
\begin{document}

\centerline{\bf\Large Knowledge Spaces and Learning Spaces\footnote{A different version is to appear in \textsl{The New Handbook of Mathematical Psychology}, Cambridge University Press.  The authors thank Laurent Fourny and Keno Merckx for their careful reading of a preliminary draft.}
}

\vskip3mm

\centerline{\textsc{Jean-Paul Doignon}\footnote{%
    Universit\'e Libre de Bruxelles,
	D\'epartement de Math\'ematique c.p.~216, 
	B-1050~Bruxelles, Belgium.
	\texttt{doignon@ulb.ac.be}}
\qquad \textsc{Jean-Claude Falmagne}\footnote{%
	Department of Cognitive Sciences,
	University of California,
	Irvine,~CA~92617,~USA,
	\texttt{jcf@uci.edu}}
}
	
\vskip3mm



\begin{abstract}
How to design automated procedures which (i)~accurately assess the knowledge of a student, and (ii)~efficiently provide advices for further study?  To produce well-founded answers, Knowledge Space Theory relies on a combinatorial viewpoint on the assessment of knowledge, and thus departs from common, numerical evaluation.  
Its assessment procedures fundamentally differ from other current ones (such as those of S.A.T.\ and A.C.T.).  They are adaptative (taking into account the possible correctness of previous answers from the student) and they produce an outcome which is far more informative than a crude numerical mark.
This chapter recapitulates the main concepts underlying Knowledge Space Theory and its special case, Learning Space Theory. 
We begin by describing the combinatorial core of the theory, in the form of two basic axioms and the main ensuing results (most of which we give without proofs).  In practical applications, learning spaces 
are huge combinatorial structures which may be difficult to manage.  We outline methods providing efficient and comprehensive summaries of such large structures.
We then describe the probabilistic part of the theory,
especially the Markovian type processes which are instrumental in uncovering the knowledge states of individuals. In the guise of the ALEKS system, which includes a teaching component, these methods have been used by millions of students in schools and colleges, and by home schooled students. We summarize some of the results of these applications. 
\end{abstract}

\noindent MSC-class: 91E45

\section[Origin and Motivation]{Origin and Motivation
\sectionmark{Origin and Motivation}}
\sectionmark{Origin and Motivation}
\label{origin}

Knowledge Space Theory (abbreviated as KST) originated with a paper by \authindex{Falmagne, J.-Cl.}\authindex{Doignon, J.-P.}\citet{Doignon_Falmagne_1985}.  This work was motivated by the shortcomings of the psychometric approach to the assessment of competence.
The psychometric models are based on the notion that competence can be measured, which the two authors thought was at least debatable.
Moreover, a typical application of a psychometric model in the form of a standardized test results in placing an individual in one of a few dozen ordered categories, which is far too coarse a classification to be useful. In the case of the S.A.T.\footnote{A test for college admission and placement in the U.S. The acronym S.A.T\ used to mean ``Scholastic Aptitude Test''.  A few years ago the meaning of S.A.T.\  was changed into ``Scholastic Assessment Test''.  Today this acronym stands alone, without any associated meaning.}, for example,
the result of the test is a number between 200 and 800 with only multiples of 10 being possible scores.

In the cited paper, Doignon and Falmagne proposed a fundamentally different theory.  The paper was followed by many others, written by them and other researchers (see the Bibliographical Notes in Section~\ref{bibliography}).

The basic idea is that an assessment in a scholarly subject should uncover the individual's `knowledge state', that is,
 the exact set of concepts mastered by the individual.  Here, `concept' means a type of problem that the individual has learned to master, such as, in Beginning Algebra:
\begin{quote} \begin{center}{\sl solving a quadratic equation with integer coefficients;}\end{center}\end{quote}
or, in Basic Chemistry
\begin{quote}\begin{center} 
\textsl{balance a chemical equation using the smallest\\ whole number stoichiometric coefficients\footnote{For example: 
Fe$_2$O$_3(s)\to\,\,\, $Fe$(s) + $O$_2(g)$, which is not balanced. The correct response is: \,\,2Fe$_2$O$_3(s)\,\to\,\,\,$4Fe$(s) + 3$O$_2(g)$.}.}
\end{center}\end{quote}
In KST, a problem type is referred to as an `item'.  Note that this usage differs to that in psychometric, where an item is a particular problem, such as: {\sl Solve the quadratic  equation} $x^2 -x -12= 0$.  In our case, the examples of an item are called \subjindex{instance}\term{instances}\footnote{So, the instances of knowledge space theory are the items of psychometrics.}. 
  
The items or problem types form a possibly quite large set, which we call the `domain' of the body of knowledge.  A knowledge state is a subset of the domain, but not any subset is a state: the knowledge states form a particular collection of subsets, which is called the `knowledge structure' or more specifically (when certain requirements are satisfied) the `knowledge space'  or the `learning space'.  The collection of states captures the whole structure of the domain.
As in Beginning Algebra, which will be our lead example in this chapter, the domain may contain as many as $650$ items, and the learning space may contain many millions states, in sharp contrast with the few dozen scoring categories of a psychometric test.  Despite this large number of possible knowledge states, an efficient assessment is feasible in the course of 25--35 questions. 
 
In Sections~\ref{core_concepts} to \ref{projection_theorem}, we review the fundamental combinatorial concepts and the main axiomatizations.  We introduce the important special case of KST, Learning Space Theory (LST). As the collection of all the feasible, realistic knowledge states may be very large, it is essential to find efficient summaries. We describe two such summaries in Sections~\ref{atoms_base} and  \ref{learning_strings}.  The theory discussed in this chapter has been extensively applied in the schools and universities.  The success of the applications is due in large part to a feature of the knowledge state produced by the assessment: the state is predictive of what a student is ready to learn.  The reason lies in a formal result, the `Fringe Theorem' (see Section~\ref{fringe_theorem}). 
In some situations, it is important to focus on a part of a knowledge structure. We call the relevant concept a `projection' of a knowledge structure on a subset of the items. This is the subject of Section~\ref{projection_theorem}.
 
A next section is devoted to the description of the Markovian type stochastic assessment procedure (Section~\ref{assessment}).   
It relies on the notion of a probabilistic knowledge structure, introduced in Section~\ref{Probabilistic Knowledge Structures}. 
In Section~\ref{sec_Building}, we give an outline of our methods for building the fundamental structure of states for a particular scholarly domain, such as  
Beginning Algebra, Pre-Calculus, or Statistics. Such constructions are enormously demanding and time consuming. They rely not only on dedicated mathematical algorithms, but also on huge bases of assessment data.  The most extensive applications of KST are in the form of the web based system called \subjindex{\aleks}\aleks\footnote{\subjindex{\aleks}\aleks{} is an acronym for ``{\bf A}ssessment and {\bf LE}arming in {\bf K}nowledge {\bf S}paces''.}, which includes a teaching component.  Millions of students have used the system, either at home, or in schools and universities.  Section~\ref{applications} reports some results of these applications.  

This chapter summarizes key concepts and results from two books. One is the monograph of \authindex{Doignon, J.-P.}\authindex{Falmagne, J.-Cl.}\citet{Falmagne_Doignon_LS}\footnote{This is a much expanded reedition of \authindex{Doignon, J.-P.}\authindex{Falmagne, J.-Cl.}\citet{Doignon_Falmagne_KS}.}. The other book is the edited volume of \authindex{Falmagne, J.-Cl.}\authindex{Albert, D.}\authindex{Doble, C.W.}\authindex{Eppstein, D.}\authindex{Hu, X.}\citet{Falmagne_Albert_Doble_Eppstein_Hu}, which contains recent data on the applications of the theory and also some new theoretical results.
A few additional results appear here in Section~\ref{learning_strings} and \ref{sec_Building}.

\section[Knowledge Structures and Learning Spaces]{Knowledge Structures and Learning Spaces
\sectionmark{Knowledge Structures and Learning Spaces}}
\sectionmark{Knowledge Structures and Learning Spaces}
\label{core_concepts}

We formalize the cognitive structure of a  scholarly subject  as a collection $\KKK$ of subsets of a basic set $Q$ of items. In the case of Beginning Algebra, the items forming $Q$ are the types of problems a student must master to be fully conversant in the subject.   We suppose that the collection $\KKK$ contains at least two subsets: the empty set, which is that of a student knowing nothing at all in the subject, and the full set $Q$ of problems. The next definition cast these basic notions in set-theoretic terms.  We illustrate the definition by a few examples.   

\begin{definition}\label{defn_ks}
A \indexedterm{knowledge structure} is a pair $(Q,\KKK)$ consisting of a  nonempty set $Q$ and a collection $\KKK$ of subsets of $Q$; we assume  $\varnothing\in\KKK$ and $Q\in\KKK$.  The set $Q$ is called the \indexedterm{domain} of the knowledge structure $(Q,\KKK)$.  The elements of $Q$ are the \subjindex{item}\term{items}, and the elements of $\KKK$ are the \subjindex{knowledge state}\term{knowledge states}, or just the \subjindex{state}\term{states}.   
A knowledge structure 
$(Q,\KKK)$ is \subjindex{finite (knowledge structure)}\term{finite} when its domain $Q$ is a finite set.
 
For any item $q$ in $Q$, we write $\KKK_q$ for $\{K\in\KKK\st q\in K\}$, the subcollection of $\KKK$ consisting of all the states containing $q$. A knowledge structure $(Q,\KKK)$ is  
\subjindex{discriminative (knowledge structure)}\term{discriminative} when for any two items $q$ and  $r$ in the domain, we have $\KKK_q=\KKK_r$ only if $q=r$. 
\EDE\end{definition}

We often abbreviate $(Q,\KKK)$ into $\KKK$ (with no loss of information because $Q=\cup\KKK$).

\pagebreak

\begin{example}\label{ex-5 examples}
Here are five examples of knowledge structures all on the same domain
$Q=\{a,b,c,d\}$:
\begin{align*}
\KKK^{(1)} &= \{\es, \{a\}, \{d\}, \{a,b\}, \{a,d\}, \{a,b,c\}, \{a,b,d\},  Q \},\\
\KKK^{(2)} &= \{\es, \{a\}, \{b\}, \{c\},\, \{a,b\}, \{a,c\},\, \{b,c\}, \{a,b,d\},\, \{a,b,c\}, \{a,c,d\},  Q \},\\
\KKK^{(3)} &= \{\es, \{a\}, \{d\}, \{a,b\}, \{c,d\}, \{a,b,c\}, \{b,c,d\},  Q \},\\
\KKK^{(4)} &= \{\es, \{c\}, \{d\}, \{c,d\}, \{a,b,c\}, \{a,b,d\},  Q \},\\\KKK^{(5)} &= \{\es, \{a\}, \{c\}, \{a,b\}, \{c,d\}, \{a,b,c\}, \{a,c,d\},  Q \}.
\end{align*}
The five knowledge structures are finite, and all but $\KKK^{(4)}$ are discriminative: we have 
\begin{equation}
\KKK_a^{(4)} = \KKK_b^{(4)} = \{\{a,b,c\}, \{a,b,d\},\, Q\}\qquad\text{with}\quad a\neq b.
\end{equation}
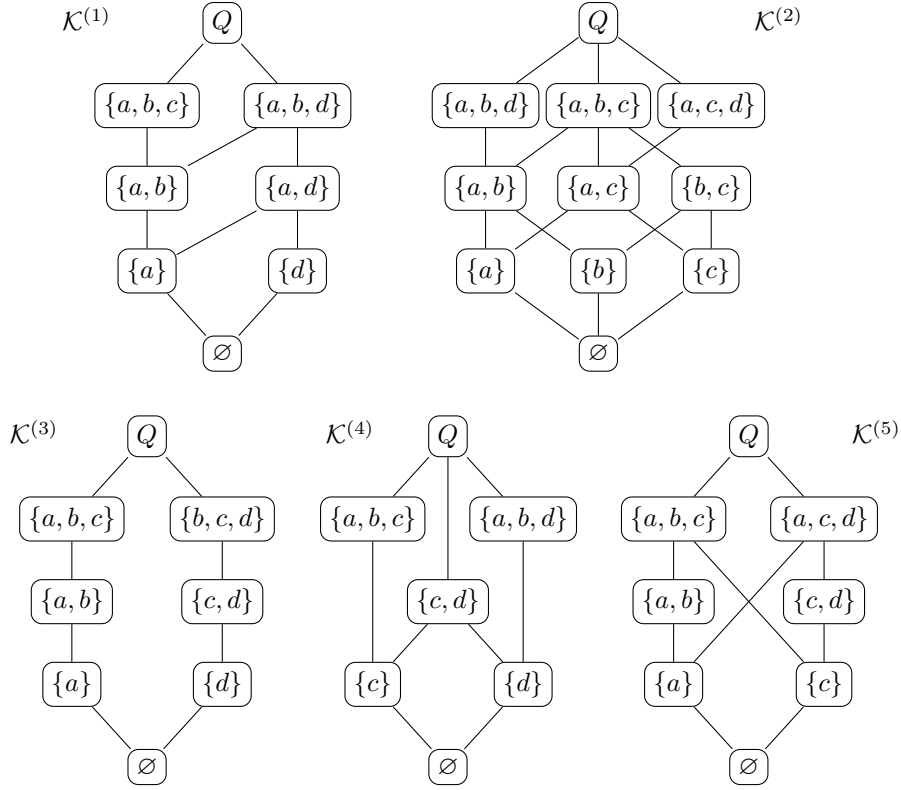
\begin{figure}
\begin{tikzpicture}[yscale=1.1,baseline=60pt,every node/.style={rectangle, draw, rounded corners}]

\begin{scope}[xshift=-1cm]
\path coordinate[label= $\KKK^{(1)}$] (KKK1) at (-1.8,3.8);
\node (es) at (0,0) {$\es$};
\node (l1) at (-1,1) {$\{a\}$};
\node (r1) at ( 1,1) {$\{d\}$};
\node (l2) at (-1,2) {$\{a,b\}$};
\node (r2) at ( 1,2) {$\{a,d\}$};
\node (l3) at (-1,3) {$\{a,b,c\}$};
\node (r3) at ( 1,3) {$\{a,b,d\}$};
\node (Q)  at ( 0,4) {$Q$};
\draw (es) -- (l1) -- (l2) -- (l3) -- (Q);
\draw (es) -- (r1) -- (r2) -- (r3) -- (Q);
\draw (l1) -- (r2);
\draw (l2) -- (r3);
\end{scope}

\begin{scope}[xshift=4cm]
\node[label= $\KKK^{(2)}$,shape=coordinate] at (2.4,3.8) {};
\node (es) at (0,0) {$\es$};
\node (l1) at (-1.5,1) {$\{a\}$};
\node (c1) at ( 0,1) {$\{b\}$};
\node (r1) at ( 1.5,1) {$\{c\}$};
\node (l2) at (-1.5,2) {$\{a,b\}$};
\node (c2) at ( 0,2) {$\{a,c\}$};
\node (r2) at ( 1.5,2) {$\{b,c\}$};
\node (l3) at (-1.5,3) {$\{a,b,d\}$};
\node (c3) at ( 0,3) {$\{a,b,c\}$};
\node (r3) at ( 1.5,3) {$\{a,c,d\}$};
\node (Q)  at ( 0,4) {$Q$};
\draw (es) -- (l1) -- (l2) -- (l3) -- (Q);
\draw (es) -- (c1) -- (r2);
\draw (c2) -- (c3) -- (Q);
\draw (es) -- (r1) -- (r2);
\draw (r3) -- (Q);
\draw (l1) -- (c2) -- (r3);
\draw (c1) -- (l2);
\draw (r1) -- (c2);
\draw (l2) -- (c3);
\draw (r2) -- (c3);
\end{scope}

\begin{scope}[xshift=-2cm,yshift=-5cm]
\node[label= $\KKK^{(3)}$,shape=coordinate] at (-1.5,3.8) {};
\node (es) at (0,0) {$\es$};
\node (l1) at (-1,1) {$\{a\}$};
\node (r1) at ( 1,1) {$\{d\}$};
\node (l2) at (-1,2) {$\{a,b\}$};
\node (r2) at ( 1,2) {$\{c,d\}$};
\node (l3) at (-1,3) {$\{a,b,c\}$};
\node (r3) at ( 1,3) {$\{b,c,d\}$};
\node (Q)  at ( 0,4) {$Q$};\
\draw (es) -- (l1) -- (l2) -- (l3) -- (Q);
\draw (es) -- (r1) -- (r2) -- (r3) -- (Q);
\end{scope}

\begin{scope}[xshift=2cm,yshift=-5cm]
\node[label= $\KKK^{(4)}$,shape=coordinate] at (-1.3,3.8) {};
\node (es) at (0,0) {$\es$};
\node (l1) at (-1,1) {$\{c\}$};
\node (r1) at ( 1,1) {$\{d\}$};
\node (c2) at ( 0,2) {$\{c,d\}$};
\node (l3) at (-1,3) {$\{a,b,c\}$};
\node (r3) at ( 1,3) {$\{a,b,d\}$};
\node (Q)  at ( 0,4) {$Q$};\
\draw (es) -- (l1) -- (l3) -- (Q) ;
\draw (es) -- (r1) -- (r3) -- (Q)  -- (c2);
\draw (l1) -- (c2) -- (r1);
\end{scope}

\begin{scope}[xshift=6cm,yshift=-5cm]
\node[label= $\KKK^{(5)}$,shape=coordinate] at (1.7,3.8) {};
\node (es) at (0,0) {$\es$};
\node (l1) at (-1,1) {$\{a\}$};
\node (r1) at ( 1,1) {$\{c\}$};
\node (l2) at (-1,2) {$\{a,b\}$};
\node (r2) at ( 1,2) {$\{c,d\}$};
\node (l3) at (-1,3) {$\{a,b,c\}$};
\node (r3) at ( 1,3) {$\{a,c,d\}$};
\node (Q)  at ( 0,4) {$Q$};\

\draw (es) -- (l1) -- (l2) -- (l3) -- (Q);
\draw (es) -- (r1) -- (r2) -- (r3) -- (Q);
\draw (l1) -- (r3);
\draw (r1) -- (l3);
\end{scope}
\end{tikzpicture}
\caption[Five examples of knowledge structures]{The five examples of knowledge structures in Example~\ref{ex-5 examples}.}
\label{fig-five-examples}
\end{figure}

In the graphs of these knowledge structures displayed in Figure~\ref{fig-five-examples}, the ascending lines show the covering relation of the states; that is, we have an ascending line from the point representing the state $K$ to the one representing the state $L$ exactly when $K$ is \subjindex{covered (state covered by)}\term{covered} by $L$, that is when $K \subset L$ and moreover there is no state $A$ in $\KKK$ such that $K \subset A \subset L$.  
\hfill\EEX\end{example}

We call a representation of a knowledge structure as exemplified in Figure~\ref{fig-five-examples} a \indexedterm{covering diagram} of the structure.
Note in passing two extreme cases of knowledge structures on a given domain $Q$.  One is $(Q,2^Q)$, where $2^Q$ denotes the \indexedterm{power set} of $Q$, that is the collection of all the subsets of $Q$.  The other one is $(Q,\{\es,Q\})$, in which the knowledge structure contains only the two required states  $\es$ and $Q$.  These two examples are trivial and uninteresting because they 
entail a complete lack of organization in the body of information covered by the items in $Q$.  

Two requirements on a knowledge structure make good pedagogical sense. One is that there should be no gaps in the organization of the material: the student should be able to master the items one by one. Also, there should be some consistency in the items:  an advanced student should have less trouble learning a new item than another, less competent student has. The two conditions incorporated in the next definition formalize the two ideas.

\begin{definition}\label{def_ls_axioms} 
A \indexedterm{learning space} $(Q,\KKK)$ is a knowledge structure which satisfies the two following conditions:
\begin{enumerate}[{[}L1{]}]
\item {\sc Learning smoothness.} For any two states $K$, $L$ with $K \subset L$, there exists a finite chain of states 
\begin{equation}\label{L1 chain}
K=K_0 \subset  K_1\subset\cdots\subset K_p = L
\end{equation}
such that  $|K_{i}\setminus K_{i-1}|=1$ for $1\leq i\leq p$ (thus we have $|L\setminus K| = p$).\\[1mm]
In words: {\sl If the learner is in some state $K$ included in some state $L$, then the learner can reach state $L$ by mastering items one by one.}\\[2mm] 
\item {\sc Learning consistency.}  For any two states $K$, $L$ with $K \subset L$,
if $q$ is an item such that $K \cup \{q\}$ is a state, then  $L \cup \{q\}$ is also a state.\\[1mm]
In words: {\sl Knowing more does not prevent learning something new.} 
\end{enumerate}
\EDE\end{definition}

Notice that any learning space is finite.  Indeed Condition~[L1] applied to the two states $\es$ and $Q$ yields a finite chain of states from $\es$ to $Q$.
In Example~\ref{ex-5 examples} (see also Figure~\ref{fig-five-examples}), only the two structures $\KKK^{(1)}$ and $\KKK^{(2)}$ are learning spaces.  The knowledge structure $\KKK^{(3)}$ satisfies Condition~[L1] in Definition~\ref{def_ls_axioms} but not Condition~[L2] (take $K=\es$, $L=\{a\}$ and $q=d$).  As for $\KKK^{(4)}$, it  satisfies [L2] but not [L1].  Finally, $\KKK^{(5)}$ does not satisfy either condition.  A simpler way to check whether a covering diagram as in Figure~\ref{fig-five-examples} represents a learning space is provided in the next section, just after Theorem~\ref{pro_learning_spaces}.

We give one more example of a learning space, which is realistic in that its ten items belong to the domain of the very large learning space of Beginning Algebra used in the \subjindex{\aleks}\aleks{} system. A remarkable feature of a learning space is that any subset of the items of a learning space also defines a learning space (see below Definition~\ref{def_projection} and Theorem~\ref{projection_theo}).  Thus we have a learning space on these ten items.  This example  will be used repeatedly later on, and in particular to illustrate the assessment mechanism, that is, the questioning algorithm uncovering the knowledge state of a student.

\begin{table}[h!]
\caption[The ten-item example]{The items of the ten-item example of Figure~\ref{ten_items_figure}.}
\label{ten_items}
\renewcommand{\arraystretch}{1.8}
\begin{tabular}{|l@{~}p{50mm}|@{~}l@{~}p{55mm}|}
\hline
\bf a. & Quotients of expressions involving exponents & \bf b. & Multiplying two binomials\\
\bf c. & Plotting a point in the coordinate plane using a virtual pencil on a Cartesian graph & \bf d. & Writing the equation of a line given the slope and a point on the line\\
\bf e. & Solving a word problem using a system of linear equations\newline(advanced problem) & \bf f. & Graphing a line given its equation \\
\bf g. & Multiplication of a decimal by a whole number & \bf h. & Integer addition \newline(introductory problem)\\
\bf i. & Equivalent fractions: fill the blank in the equation 
$\frac ab = \frac {\boxed{\phantom{.}}} {c}$, 
where $a$, $b$ and $c$ are whole numbers & \bf j. & Graphing integer functions\\
\hline
\end{tabular}
\end{table} 

\begin{example}\label{ten_items_example}
{\bf Ten items in Beginning Algebra.}  Table~\ref{ten_items} lists ten items.
Remember that items are types of problems, and not particular cases (which are called instances).  Here is an instance of item {\bf d}: {\sl A line passes through the point $(x,y) =(-3,2)$ and has a slope of $6$. Write an equation
 for this line.}  Figure~\ref{ten_items_figure} shows 34 knowledge states in $Q=\{a$, $b$, \dots, $j\}$ which together form a learning space.
 
\pgfdeclarelayer{niveau sommets}
\pgfdeclarelayer{niveau traits}
\pgfsetlayers{niveau traits,niveau sommets}
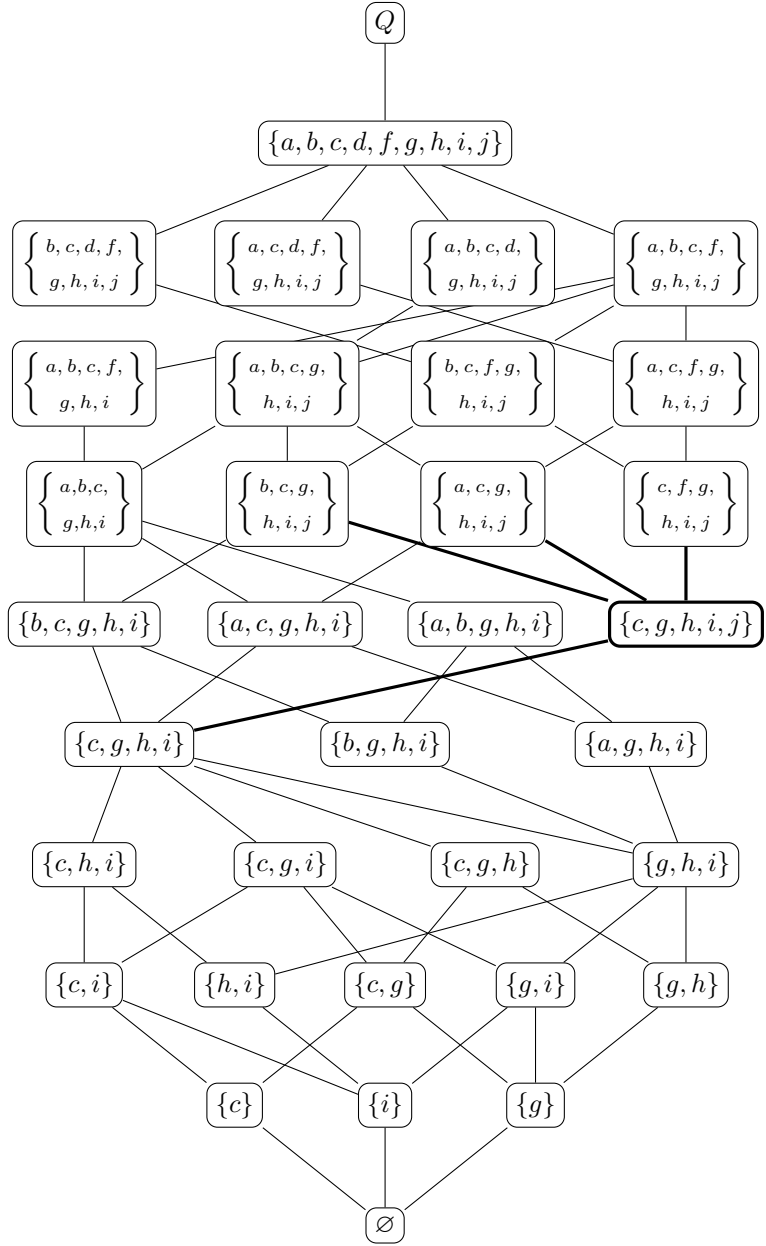
\begin{figure}[h!]
\begin{center}
\begin{tikzpicture}[yscale=1.6]

\begin{pgfonlayer}{niveau sommets}
\begin{scope}[every node/.style={rectangle, draw, rounded corners,
fill=white,}]
\node (es)  at ( 0,0) {$\es$};
\node (c) at (-2,1) {$\{c\}$};
\node (i) at ( 0,1) {$\{i\}$};
\node (g) at ( 2,1) {$\{g\}$};
\node (ci) at (-4,2) {$\{c,i\}$};
\node (gi) at ( 2,2) {$\{g,i\}$};
\node (hi) at ( -2,2) {$\{h,i\}$};
\node (gh) at ( 4,2) {$\{g,h\}$};
\node (cg) at ( 0,2) {$\{c,g\}$};
\node (cgh) at (1.33,3) {$\{c,g,h\}$};%
\node (chi) at (-4,3) {$\{c,h,i\}$};
\node (cgi) at (-1.33,3) {$\{c,g,i\}$};%
\node (ghi) at ( 4,3) {$\{g,h,i\}$};
\node (cghi) at (-3.4,4) {$\{c,g,h,i\}$};
\node (bghi) at ( 0,4) {$\{b,g,h,i\}$};
\node (aghi) at ( 3.4,4) {$\{a,g,h,i\}$};
\node (bcghi) at (-4,5) {$\{b,c,g,h,i\}$};%
\node (acghi) at ( -1.33,5) {$\{a,c,g,h,i\}$};
\node[very thick] (cghij) at (  4,5) {$\{c,g,h, i,j\}$};
\node (abghi) at (1.33,5) {$\{a,b,g,h,i\}$};
\node (cfghij) at (4,6) {$\left\{\begin{smallmatrix}\strut c,\, f,\, g,\\\strut h,\, i,\, j\end{smallmatrix}\right\}$};
\node (bcghij) at (-1.3,6) {$\left\{\begin{smallmatrix}\strut b,\,c,\,g,\\\strut h,\, i,\,j\end{smallmatrix}\right\}$};%
\node (acghij) at ( 1.3,6) {$\left\{\begin{smallmatrix}\strut a,\,c,\,g,\\\strut h,\,i,\,j\end{smallmatrix}\right\}$};
\node (abcghi) at (-4,6) {$\left\{\begin{smallmatrix}\strut a,b,c,\\\strut g,h,i\end{smallmatrix}\right\}$};
\node (abcfghi) at (-4,7) {$\left\{\begin{smallmatrix}\strut a,\,b,\,c,\,f,\\\strut g,\,h,\,i\end{smallmatrix}\right\}$};
\node (abcghij) at (-1.3,7) {$\left\{\begin{smallmatrix}\strut a,\,b,\,c,\,g,\\\strut h,\, i,\,j\end{smallmatrix}\right\}$};
\node (bcfghij) at (1.3,7) {$\left\{\begin{smallmatrix}\strut b,\,c,\, f,\,g,\\\strut h,\, i,\,j\end{smallmatrix}\right\}$};%
\node (acfghij) at ( 4,7) {$\left\{\begin{smallmatrix}\strut a,\,c,\,f,\,g,\\\strut h,\,i,\,j\end{smallmatrix}\right\}$};
\node (abcfghij) at ( 4,8) {$\left\{\begin{smallmatrix}\strut a,\,b,\,c,\,f,\\\strut g,\, h, \,i,\,j\end{smallmatrix}\right\}$};
\node (bcdfghij) at (-4,8) {$\left\{\begin{smallmatrix}\strut b,\,c,\,d,\,f,\\\strut g,\,h,\, i,\,j\end{smallmatrix}\right\}$};%
\node (acdfghij) at ( -1.3,8) {$\left\{\begin{smallmatrix}\strut a,\,c,\,d,\,f,\\\strut g,\, h,\,i,\,j\end{smallmatrix}\right\}$};
\node (abcdghij) at ( 1.3,8) {$\left\{\begin{smallmatrix}\strut a,\,b,\,c,\,d,\\\strut g,\,h,\,i,\,j\end{smallmatrix}\right\}$};
\node (abcdfghij) at (0,9) {$\{a,b,c,d,f,g,h,i,j\}$};
\node (Q) at (0,10) {$Q$};
\end{scope}
\node (schtroumpf) at (0,10.5){};
\end{pgfonlayer}

\begin{pgfonlayer}{niveau traits}
\draw (es) -- (c);
\draw (es) -- (i);
\draw (es) -- (g);
\draw (c) -- (ci);
\draw (c) -- (cg);
\draw  (i) -- (hi);
\draw (i) -- (gi);
\draw (g) -- (cg);
\draw (i) -- (ci);
\draw (g) -- (gi);
\draw (g) -- (gh);
\draw (ci) -- (chi);
\draw (hi) -- (chi);
\draw (cg) -- (cgi);
\draw (hi) -- (ghi);
\draw (gi) -- (cgi);
\draw (gh) -- (cgh);
\draw (ci) -- (cgi);
\draw (cg) -- (cgh);
\draw (gh) -- (ghi);
\draw (gi) -- (ghi);
\draw (chi) -- (cghi);
\draw (cgi) -- (cghi);
\draw (cgh) -- (cghi);
\draw (ghi) -- (cghi);
\draw (ghi) -- (bghi);
\draw (ghi) -- (aghi);
\draw (cghi) -- (bcghi);
\draw (cghi) -- (acghi);
\draw[very thick] (cghi) -- (cghij);
\draw (bghi) -- (abghi);
\draw (aghi) -- (acghi);
\draw (bghi) -- (bcghi);
\draw (aghi) -- (abghi);
\draw[very thick] (cghij) -- (acghij);
\draw[very thick] (cghij) -- (bcghij);
\draw[very thick] (cghij) -- (cfghij);
\draw (bcghi) -- (bcghij);
\draw (bcghi) -- (abcghi);
\draw (acghi) -- (acghij);
\draw (acghi) -- (abcghi);
\draw (abghi) -- (abcghi);
\draw (abcghi) -- (abcghij);
\draw (abcghi) -- (abcfghi);
\draw (bcghij) -- (bcfghij);
\draw (bcghij) -- (abcghij);
\draw (acghij) -- (acfghij);
\draw (acghij) -- (abcghij);
\draw (cfghij) -- (bcfghij);
\draw (cfghij) -- (acfghij);
\draw (bcfghij) -- (bcdfghij);
\draw (bcfghij) -- (abcfghij);
\draw (acfghij) -- (acdfghij);
\draw (acfghij) -- (abcfghij);
\draw (abcghij) -- (abcfghij);
\draw (abcghij) -- (abcdghij);
\draw (abcfghi) -- (abcfghij);
\draw (abcfghij) -- (abcdfghij);
\draw(bcdfghij) -- (abcdfghij);
\draw(acdfghij) -- (abcdfghij);
\draw (abcdghij) -- (abcdfghij);
\draw (abcdfghij) -- (Q);
\end{pgfonlayer}

\end{tikzpicture}
\end{center}
\caption[The covering diagram of the ten-item example]{The covering diagram of the ten-item learning space $\LLL$  of Example~\ref{ten_items_example}. The meaning of the four bold lines joining the state $\{c,g,h,i,j\}$  to four other states is explained in Section~\ref{fringe_theorem}.
\label{ten_items_figure}
}
\end{figure}
\EEX\end{example}

\clearpage 

\section[Knowledge Spaces and Wellgradedness]{Knowledge Spaces and Wellgradedness
\sectionmark{Knowledge Spaces and Wellgradedness}}
\sectionmark{Knowledge Spaces and Wellgradedness}
\label{other_axioms}
The name ``learning space'' specifically refers to 
Conditions~[L1] and [L2] of Definition~\ref{def_ls_axioms}.  As mentioned earlier, the two conditions have an interesting, pedagogical interpretation.  Other characterizations of the same combinatorial concept focus on some other key concepts, which we review in the present section.  The \indexedterm{symmetric difference} between two sets $K$ and $L$ is defined by $K\bigtriangleup L = (K\setminus L)\cup (L\setminus K)$.

 
\begin{definition}\label{space_wg_downgrad} 
A \indexedterm{knowledge space} $\KKK$ is a knowledge structure which is  closed under union, or \term{$\cup$-closed}\subjindex{union-closed (knowledge structure)}, that is, $\cup \CCC\in \KKK$ for any subcollection $\CCC$ of~$\KKK$. 
The knowledge structure $\KKK$ is \term{well-graded}
\subjindex{well-graded (knowledge structure)}
 if for any two states $K$ and $L$ in $\KKK$, there exists a natural number $h$ such that $|K\bigtriangleup L| = h$ and a finite sequence of states $K=K_0$, $K_1$, \dots, $K_h=L$ such that $|K_{i-1}\bigtriangleup K_i|=1$ for $1\leq i\leq h$.  
The knowledge structure $\KKK$ is \subjindex{accessible (knowledge structure)}\term{accessible} or \subjindex{downgradable (knowledge structure)}\term{downgradable}\footnote{See \citet{Doble_Doignon_Falmagne_Fishburn2001} for the latter term.\authindex{Doble, C.W.}\authindex{Doignon, J.-P.}\authindex{Falmagne, J.-Cl.}\authindex{Fishburn, P.C.}} if for any nonempty state $K$ in $\KKK$, there is some item $q$ in $K$ such that $K\setminus{q}\in \KKK$.  A downgradable, finite knowledge space is called an \indexedterm{antimatroid\,}\footnote{See \citet{Korte_Lovasz_Schrader_1991} for this use of the word. \authindex{Korte, B.}\authindex{Lov\'asz, L.}\authindex{Schrader, R.}  For other authors an antimatroid is closed under intersection rather than under union (and ``upgradable'' rather than downgradable), see for example \authindex{Edelman, P.H.}\authindex{Jamison, R.E.}\citet{Edelman_Jamison_1985}.}.
\EDE\end{definition}

The closure under union is a critical property because it enables a (sometime highly efficient) summary of a knowledge space by a minimal subcollection of its states. The subcollection is called the `base' of the knowledge space and is one of the topics of our next section.  Note that a well-graded knowledge structure $(Q,\KKK)$ is necessarily accessible (given the state $K$, take $L=\es$ in Definition~\ref{space_wg_downgrad}).  
However, an accessible knowledge structure is not necessarily finite nor well-graded.

\begin{example}
Take as items all the natural numbers, thus the domain is~$\N$.  As states, take the empty set plus all the subsets of $\N$ whose complement is finite.  We denote by $\GGG$ the collection of states:
\begin{equation}
\GGG \;=\; \{\es\} \cup \{ K\in 2^\N \;\st\; |\N \setminus K| < +\infty\}.
\end{equation}
The resulting structure $(\N,\GGG)$ is accessible.  It is infinite, and not well-graded (consider for instance the two states $\es$ and $\N$).
\EEX\end{example}

\begin{theorem}\label{big_equivalence}
\sl For any knowledge structure $(Q,\KKK)$, the following three statements are equivalent.
\begin{enumerate}[\quad\rm(i)]
\item $(Q,\KKK)$ is a learning space.
\item $(Q,\KKK)$ is an antimatroid.
\item $(Q,\KKK)$ is a well-graded knowledge space.
\end{enumerate}
\end{theorem}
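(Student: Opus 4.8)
The plan is to prove the three-way equivalence by establishing a cycle of implications, namely (i)$\Rightarrow$(iii)$\Rightarrow$(ii)$\Rightarrow$(i), since each individual arrow is more tractable than a direct characterization. Throughout I would work with the equivalent reformulation of the axioms in terms of adding single items, which is how [L1] and [L2] are phrased, and I would repeatedly use the finiteness of learning spaces (noted after Definition~\ref{def_ls_axioms}) so that all chains and inductions terminate.

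\textbf{Step (i)$\Rightarrow$(iii).} Assume $(Q,\KKK)$ is a learning space. Well-gradedness is almost immediate: for states $K\subset L$, condition [L1] supplies a chain adding one item at a time, so $|K_{i-1}\bigtriangleup K_i|=1$ along the chain and $h=|L\setminus K|=|K\bigtriangleup L|$. For arbitrary $K,L$ (not nested) I would route through a common state, for instance pass from $K$ down to $K\cap L$ and then up to $L$, checking that $K\cap L\in\KKK$ — this is where I expect the first real work, since intersection-closure is not assumed directly. The cleaner route is to first prove $\cup$-closure and then derive the general well-graded chain. To show $\cup$-closure, I would take two states $K,L$ and use [L1] and [L2] together: list the elements of $L\setminus K$ and add them one at a time to $K$, using [L2] to guarantee at each stage that appending the next item to the current (larger) set still yields a state, given that appending it to a smaller set did. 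Formalizing ``appending it to a smaller set yields a state'' requires the accessibility/smoothness chain inside $L$, so the argument interleaves [L1] applied within $L$ and [L2] applied to lift each single-item extension up to the $K$-side. This interplay is the technical heart of the forward direction.

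\textbf{Step (iii)$\Rightarrow$(ii).} Assume $(Q,\KKK)$ is a well-graded knowledge space; I must show it is a downgradable finite knowledge space, i.e.\ an antimatroid. Finiteness follows because well-gradedness applied to $\es$ and $Q$ forces $|Q|$ to equal some natural number $h$. Downgradability (accessibility) was already observed in the text right after Definition~\ref{space_wg_downgrad}: given a nonempty state $K$, take $L=\es$ in the well-graded definition to get a single-step chain descending from $K$, whose first step removes exactly one item and stays in $\KKK$. Since $\KKK$ is assumed $\cup$-closed and now known finite and accessible, it is by definition an antimatroid, so this implication is short.

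\textbf{Step (ii)$\Rightarrow$(i).} Assume $(Q,\KKK)$ is an antimatroid, i.e.\ a finite, $\cup$-closed, accessible knowledge space; I must recover [L1] and [L2]. For [L1], given $K\subset L$ I would build the chain by repeatedly applying accessibility inside the relativized structure on $L$: accessibility of $L$ produces a descending sequence removing one item at a time down to $\es$, and I must argue it can be made to pass through $K$, or equivalently build an ascending chain from $K$ to $L$ using $\cup$-closure to reinsert items of $L\setminus K$. The standard antimatroid fact here is that $\cup$-closure plus accessibility yields the single-item-extension (``chain'') property, which I would prove by induction on $|L\setminus K|$, at each step locating an item $q\in L\setminus K$ with $K\cup\{q\}\in\KKK$ (existence of such an exchangeable item is the crux). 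For [L2], given $K\subset L$ and $q$ with $K\cup\{q\}\in\KKK$, I would simply observe that $L\cup\{q\}=L\cup(K\cup\{q\})$ is a union of two states, hence in $\KKK$ by $\cup$-closure — this is the cleanest single use of the union axiom.

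\textbf{Main obstacle.} I expect the hardest point to be the ``exchange'' lemma underpinning [L1] in the direction (ii)$\Rightarrow$(i): showing that whenever $K\subsetneq L$ are states there exists $q\in L\setminus K$ with $K\cup\{q\}\in\KKK$. The natural approach is to apply accessibility to the state $L$ to strip off one item at a time; if every stripped item lands outside $K$ one eventually reaches $K$ from above, but one must ensure the descent does not ``skip below'' $K$ prematurely. Making this rigorous — likely by choosing, among all states sandwiched as $K\subseteq M\subseteq L$, a minimal $M$ strictly above $K$ and using accessibility of $M$ together with $\cup$-closure to contradict minimality unless $|M\setminus K|=1$ — is the step that carries the combinatorial weight of the whole equivalence.
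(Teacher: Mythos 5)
Your proof is correct, but its architecture differs from the paper's. The paper proves Theorems~\ref{big_equivalence} and~\ref{pro_learning_spaces} together, running the cycle (i)~$\Rightarrow$~(ii)~$\Rightarrow$~(iii)~$\Rightarrow$~((a),\,(b),\,(c))~$\Rightarrow$~(i): after reaching wellgradedness it deliberately weakens to the local conditions of Theorem~\ref{pro_learning_spaces} (finiteness, downgradability, and ``$K\cup\{q\},\,K\cup\{r\}\in\KKK$ implies $K\cup\{q,r\}\in\KKK$''), and must then reconstruct [L1] and [L2] from these alone, which costs a double-chain induction with repeated applications of condition~(c). You run the cycle in the opposite direction, (i)~$\Rightarrow$~(iii)~$\Rightarrow$~(ii)~$\Rightarrow$~(i), never surrendering full $\cup$-closure; this buys you the one-line proof of [L2] (as $L\cup\{q\}=L\cup(K\cup\{q\})$) and reduces [L1] to the one mechanism both proofs share: take a one-item-at-a-time chain from $\es$ up to a state (supplied by [L1] or by iterated accessibility), union every term of it with $K$, and invoke $\cup$-closure plus deletion of repetitions. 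Indeed your (i)~$\Rightarrow$~(iii) is exactly the paper's (i)~$\Rightarrow$~(ii) (the [L1]/[L2] interleave yielding $\cup$-closure) composed with its (ii)~$\Rightarrow$~(iii) (the chain routed through $K\cup L$); what your route gives up is only the by-product the paper wants anyway, namely the characterization in Theorem~\ref{pro_learning_spaces}. One caution on your last step: in the ``minimal sandwiched $M$'' argument, a single application of accessibility to $M$ may strip an item that lies in $K$, in which case $M\setminus\{q\}$ is not sandwiched and nothing is contradicted; to rule out $|M\setminus K|\ge 2$ you still need the full descending chain from $M$ to $\es$ unioned with $K$ --- and once that trick is in hand, applying it to $L$ itself produces the entire [L1] chain in one stroke, so the minimality detour is dispensable (this is in effect what the paper does).
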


\authindex{Cosyn, E.}\authindex{Uzun, H.}\citet{Cosyn_Uzun_2009} proved the equivalence of Conditions~(i) and (ii) in Theorem~\ref{big_equivalence}, while \authindex{Korte, B.}\authindex{Lov\'asz, L.}\authindex{Schrader, R.}\citet{Korte_Lovasz_Schrader_1991} established still another characterization of antimatroids (or learning spaces): Theorem~\ref{pro_learning_spaces} below is Lemma~1.2 of their Chapter~3.  We provide a combined proof of Theorems~\ref{big_equivalence} and \ref{pro_learning_spaces} below.

\begin{theorem}\label{pro_learning_spaces}
\sl A knowledge structure $(Q,\KKK)$ is a learning space if and only if its collection $\KKK$ of states satisfies the following three conditions:
\begin{enumerate}[\quad\rm(a)]
\item $Q$ is finite;
\item $\KKK$ is downgradable, that is: any nonempty state $K$ contains some item $q$ such that $K \setminus \{q\} \in \KKK$;
\item for any state $K$ and any items $q$, $r$, if $K \cup \{q\}$, 
$K \cup \{r\}\in \KKK$, then $K \cup \{q,r\}\in \KKK$.
\end{enumerate}
\end{theorem}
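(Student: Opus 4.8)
The plan is to prove the two implications separately. The ``only if'' direction is quick, so the real work is the converse, and within it a single union-closure lemma carries all the weight.

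First I would dispatch the forward direction, assuming $(Q,\KKK)$ is a learning space. Condition (a) is just the remark recorded after Definition~\ref{def_ls_axioms}: applying [L1] to $\es\subset Q$ produces a finite chain, so $Q$ is finite. For (b), given a nonempty state $K$ I apply [L1] to the pair $\es\subset K$; the state immediately below $K$ in the resulting tight chain has the form $K\setminus\{q\}$ and lies in $\KKK$. For (c), in the only nontrivial case $q\neq r$ with $q,r\notin K$, I apply [L2] to the inclusion $K\subset K\cup\{r\}$ and the item $q$: since $K\cup\{q\}\in\KKK$, axiom [L2] gives $(K\cup\{r\})\cup\{q\}=K\cup\{q,r\}\in\KKK$.

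For the converse I assume (a), (b), (c). The central step, and the step I expect to be the main obstacle, is to show that these three conditions force $\KKK$ to be $\cup$-closed; once that is established, $(Q,\KKK)$ is a finite downgradable knowledge space, i.e. an antimatroid, and one can either finish by hand (below) or simply invoke Theorem~\ref{big_equivalence}. I would prove $A\cup B\in\KKK$ for all states $A,B$ by induction on $|A|+|B|$; arbitrary unions reduce to binary ones because $Q$ is finite. If $A\subseteq B$ or $B\subseteq A$ there is nothing to do, so suppose $A,B$ are incomparable (hence both nonempty). Using accessibility (b) I pick $a\in A$ with $A_1:=A\setminus\{a\}\in\KKK$ and $b\in B$ with $B_1:=B\setminus\{b\}\in\KKK$. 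If $a\in B$ then $A_1\cup B=A\cup B$, and the inductive hypothesis applied to $(A_1,B)$ already gives $A\cup B\in\KKK$; the case $b\in A$ is symmetric. The delicate case is $a\in A\setminus B$ and $b\in B\setminus A$: here the inductive hypothesis supplies the three states $A_1\cup B=(A\cup B)\setminus\{a\}$, $A\cup B_1=(A\cup B)\setminus\{b\}$, and $A_1\cup B_1=(A\cup B)\setminus\{a,b\}$, all of strictly smaller total size. Setting $M:=A_1\cup B_1$, these say exactly that $M$, $M\cup\{b\}=A_1\cup B$, and $M\cup\{a\}=A\cup B_1$ all lie in $\KKK$, so condition (c) applied to the state $M$ and the items $a,b$ yields $M\cup\{a,b\}=A\cup B\in\KKK$, closing the induction. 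The point worth flagging is that (c) requires its base to be a genuine state; what makes the argument go through is the recognition that this base is precisely $M=A_1\cup B_1$, which the inductive hypothesis certifies.

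Finally I would extract the learning-space axioms from $\cup$-closure. Axiom [L2] is immediate: for $K\subset L$ with $K\cup\{q\}\in\KKK$ one has $L\cup\{q\}=L\cup(K\cup\{q\})\in\KKK$. For [L1], given $K\subset L$ I use accessibility to peel $L$ down along a tight descending chain $L=M_0\supset M_1\supset\cdots\supset M_r=\es$ and then set $N_i:=M_i\cup K$. Each $N_i$ is a state by $\cup$-closure, consecutive terms satisfy $|N_i\setminus N_{i+1}|\le 1$, and the sequence runs from $N_0=L$ down to $N_r=K$; deleting repetitions leaves a tight chain, which read upward witnesses [L1]. Thus $(Q,\KKK)$ is a learning space, completing the equivalence.
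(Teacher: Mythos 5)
Your proof is correct, but it is organized quite differently from the paper's. The paper does not prove this theorem in isolation: it proves Theorems~\ref{big_equivalence} and \ref{pro_learning_spaces} together by a single cycle of implications, (i) learning space $\Rightarrow$ (ii) antimatroid $\Rightarrow$ (iii) well-graded knowledge space $\Rightarrow$ ((a),(b),(c)) $\Rightarrow$ (i). Thus its ``forward'' direction reaches (a), (b), (c) only after first establishing $\cup$-closure and wellgradedness, whereas you derive (a), (b), (c) directly from [L1] and [L2] in a few lines. In the converse, the paper proves [L1] straight from (a),(b),(c): it takes two one-item-increment chains from $\es$ up to $K$ and up to $L$ (supplied by (b)) and fills in the grid of unions $K_i\cup L_j$ by repeated use of (c), then proves [L2] by marching along the resulting chain from $K$ to $L$ with (c). You instead isolate full binary $\cup$-closure as the key lemma, proved by induction on $|A|+|B|$ with (b) peeling off elements and (c) gluing in the incomparable case; both axioms then fall out in one line each ([L2] as $L\cup(K\cup\{q\})$, [L1] by unioning $K$ along an accessibility chain for $L$). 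The engine is the same---(b) produces one-item steps, (c) merges them---but your decomposition effectively routes through condition (ii) of Theorem~\ref{big_equivalence} (finite, accessible, $\cup$-closed), which is cleaner for this theorem taken alone and hands you the antimatroid half of Theorem~\ref{big_equivalence} for free; the paper's cycle costs more per implication but proves both theorems, including wellgradedness, in one pass. One caution: your parenthetical alternative of ``simply invoking Theorem~\ref{big_equivalence}'' would be circular inside the paper, since there the only proof of that theorem is the combined cycle containing the very implication you are proving---but since you supply the self-contained finish, this does not affect your argument.
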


Theorem~\ref{pro_learning_spaces} makes it easy to check whether a (finite) covering diagram such as that pictured in Figure~\ref{fig-five-examples} represents a learning space.  Assume that the points representing two states are at the same level (or height) if and only if they have the same number of items; then it suffices to check that: (i)~any ascending line connects points at two successive levels; (ii)~any point representing a nonempty state is the end of at least one ascending line; (iii)~if two ascending lines start from the same point, then their endpoints are the origins of ascending lines having the same endpoint.

\begin{proof}[Proofs of Theorems~\ref{big_equivalence} and \ref{pro_learning_spaces}]
For a given knowledge structure $(Q,\KKK)$, we show that
\begin{center}
(i) $\Rightarrow$ (ii) $\Rightarrow$ (iii) $\Rightarrow$ \big((a), (b) and (c)\big) $\Rightarrow$ (i).
\end{center}
First notice that any of the three conditions (i), (ii), (iii) entails the finiteness of $Q$, that is, Condition~(a).

\medskip

\noindent(i) $\Rightarrow$ (ii). Let $(Q,\KKK)$ be a learning space as in Definition~\ref{def_ls_axioms}. We prove that $(Q,\KKK)$ is an antimatroid as in Definition~\ref{space_wg_downgrad}, in other words that $(Q,\KKK)$ is a finite knowledge space which is moreover downgradable.  Suppose that $K$, $L$ are states.  We first apply Learning Smoothness to $\es$ and $L$ and derive a sequence $L_0=\es$, $L_1$, \dots, $L_\ell=L$ of states such that $|L_i\setminus L_{i-1}|=1$ for $1 \le i \le \ell$.  Then applying Learning Consistency to the states $\es$ and $K$ and the item forming $L_1$, we derive $K \cup L_1\in \KKK$.  Next, we apply Learning Consistency to the states $L_1$ and $K\cup L_1$ and the item forming $L_2\setminus L_1$ to derive $K \cup L_2\in \KKK$.  The general step, for $i=1$, $2$, \dots, $\ell$, applies Learning Consistency to $L_{i-1}$ and $K\cup L_{i-1}$ and the item forming $L_i\setminus L_{i-1}$ to derive $K \cup L_i\in \KKK$.  At the last step ($i=\ell$), we get $K \cup L \in \KKK$.  On the other hand, downgradability of $\KKK$ at the state $K$ is just a particular case of Learning Smoothness at the states $\es$ and $K$. 

\medskip

\noindent(ii) $\Rightarrow$ (iii). If $(Q,\KKK)$ is an antimatroid, then $\KKK$ is closed under union by definition.  To prove the wellgradedness of $\KKK$ (Definition~\ref{space_wg_downgrad}), we take two states $K$ and $L$.  By assumption, $K\cup L$ is also a state, and moreover by downgradability there exist a sequence of states $M_0=\es$, $M_1$, \dots, $M_h=K\cup L$  with $|M_i\setminus M_{i-1}|=1$ for $1 \le i \le h$.  Then $K \cup M_0$, $K \cup M_1$, \dots, $K \cup M_h$, after deletion of repetitions, becomes an increasing sequence $K_0$, $K_1$, \dots, $K_k$ from $K$ to $K \cup L$ with increments consisting of one item.  We  derive a similar sequence $L_0$, $L_1$, \dots, $L_\ell$ from $L$ to $K\cup L$.  Finally, $K_0$, $K_1$, \dots, $K_k=L_\ell$, $L_{\ell-1}$, \dots, $L_0$ is the required sequence from $K$ to $L$ (indeed, $k+\ell=|K\bigtriangleup L|$).
   
\medskip 

\noindent(iii) $\Rightarrow$ ((a), (b) and (c)).  Downgradability~(b) is a direct consequence of wellgradedness.
To prove (c), we only  need to notice $K\cup\{q,r\}= (K\cup\{q\}) \cup (K\cup\{r\})$ and apply the assumed closure under union.

\medskip

\noindent((a), (b) and (c)) $\Rightarrow$ (i). To prove Learning Smoothness,  we consider two states $K$ and $L$ such that $K \subset L$. 
By downgradability, there exist two sequences $K_0=\es$, $K_1$, \dots, $K_k=K$ and $L_0=\es$, $L_1$, \dots, $L_\ell=L$ of states such that $|K_i\setminus K_{i-1}|=1$ for $1 \le i \le k$ and $|L_j\setminus L_{j-1}|=1$ for $1 \le j \le \ell$.  By repeated applications of (c), we derive  
$K_1 \cup L_1$, $K_2 \cup L_1$, \dots, $K_k \cup L_1 \in \KKK$, 
next 
$K_1 \cup L_2$, $K_2 \cup L_2$, \dots, $K_k \cup L_2 \in \KKK$,
etc.,
and finally 
$K_1 \cup L_{l-1}$, $K_2 \cup L_{l-1}$, \dots, $K_k \cup L_{l-1} \in \KKK$.  
Thus $K \cup L_0=K$, $K \cup L_1$, \dots, $K \cup L_{l-1}$, $L$ are all in $\KKK$ (remember $K_k=K$, $L_l=L$ and $K \subset L$).  After deletion of repetitions, we obtain the desired sequence from $K$ to $L$.  

To prove Learning Consistency, we again consider two states $K$ and $L$ with $K \subset L$ together with an item $q$ such that $K \cup \{q\}\in\KKK$.  In the previous paragraph, we proved the existence of a sequence $M_0=K$, $M_1$, \dots, $M_h=L$ of states such that $|M_i\setminus M_{i-1}|=1$ for $1 \le i \le h$.   Applying (c) repeatedly, we obtain $M_1 \cup \{q\} \in \KKK$, $M_2 \cup \{q\} \in \KKK$, \dots, $M_h \cup \{q\} \in \KKK$, the last one being $L \cup \{q\} \in \KKK$ as desired. 
\end{proof}
  
A simple case of a learning space arises when the collection of states is closed under both union and intersection. 

\begin{definition} \label{quasi_ordinal}
A \term{quasi ordinal space}\subjindex{quasi ordinal (space)} is a knowledge space closed under intersection.  A \subjindex{ordinal (space)}\subjindex{partially ordinal (space)}\term{(partially) ordinal space} is a quasi ordinal space which is discriminative.
\EDE\end{definition}

In Example~\ref{ex-5 examples}, only the structure $\KKK^{(1)}$ is a quasi ordinal space, and it is even an ordinal space.  The reason for the terminology in Definition~\ref{quasi_ordinal} lies in Theorem~\ref{Birkhoff} below, due to \authindex{Birkhoff, G.}\citet{Birkhoff37}. 
We recall that a \indexedterm{quasi order} on $Q$ is a reflexive and transitive relation on $Q$.  A \indexedterm{partial order} on $Q$ is a quasi order on $Q$ which is an \indexedterm{antisymmetric relation} (that is, for all $q$ and $r$ in $Q$, it holds that $q R r$ and $r R q$ implies $q=r$).

\begin{theorem}[Birkhoff, 1937]\label{Birkhoff}  
\sl There exists a one-to-one correspondence between the collection of all quasi ordinal spaces $\KKK$ on a set $Q$ and the collection of all quasi orders $\QQQ$ on $Q$.  One such correspondence  is specified by the two equivalences\footnote{We recall the formula $\KKK_q=\{K\in \KKK \st q\in K\}$.}
\begin{gather}
\text{for all }q, r \text{ in } Q:\quad q\QQQ r\; \EQ\; \KKK_q\supseteq \KKK_r;\\
\text{for all } K \subseteq Q:\quad K\in\KKK\; \EQ\; (\forall (q,r)\in \QQQ:\, r\in K \Rightarrow q\in K).\label{eq_ideals}
\end{gather}
Its restriction to discriminative spaces links partially ordinal spaces to partial orders.
\end{theorem}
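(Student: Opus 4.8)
The plan is to exhibit the two displayed assignments as mutually inverse maps between the quasi ordinal spaces on $Q$ and the quasi orders on $Q$, after first checking that each assignment lands in the intended class. So there are three things to do: show the space-to-relation map yields a quasi order, show the relation-to-space map yields a quasi ordinal space, and show the two compositions are identities; the antisymmetric refinement is then an easy add-on.

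First I would verify that the relation defined from a quasi ordinal space $\KKK$ by $q\QQQ r\EQ\KKK_q\supseteq\KKK_r$ is a quasi order: reflexivity is immediate since $\KKK_q\supseteq\KKK_q$, and transitivity follows from transitivity of set inclusion. Conversely, given a quasi order $\QQQ$, I would check that the collection $\KKK$ of its order ideals, namely the sets $K$ with $r\in K\Rightarrow q\in K$ whenever $q\QQQ r$, is a quasi ordinal space. Both $\es$ and $Q$ are ideals, and an arbitrary union or intersection of ideals is again an ideal: if $r$ lies in a union it lies in some member, which then contains $q$, and if $r$ lies in an intersection then every member contains $r$ hence $q$. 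Thus $\KKK$ is $\cup$-closed and closed under intersection, as required.

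The core of the argument is that the assignments are inverse. Starting from $\QQQ$, passing to its ideals $\KKK$, and back to the relation $q\QQQ' r\EQ\KKK_q\supseteq\KKK_r$, I would prove $\QQQ'=\QQQ$. The implication $q\QQQ r\Rightarrow q\QQQ' r$ is immediate from the definition of an ideal, and for the converse I would invoke the principal down-set $\{s\st s\QQQ r\}$, which is an ideal by transitivity and contains $r$ by reflexivity; hence any $q$ forced into every ideal containing $r$ satisfies $q\QQQ r$. Starting instead from a space $\KKK$, forming $\QQQ$, and then its ideals $\KKK'$, the inclusion $\KKK\subseteq\KKK'$ is routine: if $K\in\KKK$ with $q\QQQ r$ and $r\in K$, then $K\in\KKK_r\subseteq\KKK_q$, so $q\in K$, i.e.\ $K$ is an ideal.

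The main obstacle is the reverse inclusion $\KKK'\subseteq\KKK$, i.e.\ that every order ideal is genuinely a state; this is where the closure properties do real work. The key lemma I would isolate is that, for each item $q$, the smallest state containing $q$, which exists because $\KKK$ is closed under intersection, equals the principal down-set $K(q)=\{s\st s\QQQ q\}$: indeed $s$ belongs to every state containing $q$ exactly when $\KKK_q\subseteq\KKK_s$, which is the definition of $s\QQQ q$. Given an ideal $K$, I would then write $K=\bigcup_{q\in K}K(q)$, where $\subseteq$ holds because $q\in K(q)$ by reflexivity and $\supseteq$ holds because the ideal property of $K$ pulls each $s\QQQ q$ with $q\in K$ back into $K$. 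Since $\KKK$ is $\cup$-closed, this exhibits $K$ as a state and closes the equivalence. Finally, for the last assertion I would observe that antisymmetry of $\QQQ$ says precisely that $\KKK_q=\KKK_r$ forces $q=r$, which is exactly discriminativeness, so the correspondence restricts to one between partially ordinal spaces and partial orders.
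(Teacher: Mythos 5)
Your proof is correct and complete. There is, in fact, no in-paper proof to compare it against: the paper states Theorem~\ref{Birkhoff} without proof, crediting it to Birkhoff and deferring to the literature, so your argument stands on its own merits, and every step checks out — the verification that the order ideals of a quasi order form a collection closed under arbitrary unions and intersections, the use of the principal down-set $\{s \st s\QQQ r\}$ (an ideal by transitivity, containing $r$ by reflexivity) to recover the relation from its ideals, and the decomposition $K=\bigcup_{q\in K}K(q)$ to show that every ideal of the derived quasi order is a state. Two remarks are worth adding. First, your key lemma — that the smallest state containing $q$ exists and equals $K(q)=\{s\st s\QQQ q\}$ — silently uses closure of $\KKK$ under \emph{arbitrary} (not merely pairwise) intersections; this is the intended reading of Definition~\ref{quasi_ordinal} and is exactly what makes the theorem hold for infinite $Q$, so it deserves to be said explicitly. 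Second, your argument sits naturally inside the paper's own framework: your $K(q)$ is precisely the unique atom at $q$ in the sense of Definition~\ref{atoms}, and writing each ideal as the union of the atoms at its elements is the specialization, to intersection-closed spaces, of the Galois-connection argument behind Theorem~\ref{surmise_theo}, where intersection closure is dropped and the single minimal state $K(q)$ is replaced by the collection of atoms at $q$ (the clauses of the surmise function). Seen this way, your proof explains why the paper calls Theorem~\ref{surmise_theo} an extension of Birkhoff's theorem.
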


Note in passing that the closure under intersection does not make good pedagogical sense.
A variant of Theorem~\ref{Birkhoff}\authindex{Birkhoff, G.} for knowledge spaces appears below as Theorem~\ref{surmise_theo}; a variant for learning spaces follows from Theorem~\ref{pro_LS_through_surmise}.
  
\section[The Base and the Atoms]{The Base and the Atoms\sectionmark{The Base and the Atoms}}
\sectionmark{The Base and the Atoms}
\label{atoms_base}
In practice, learning spaces tend to be very large, counting millions of states.  For various purposes---for example, to store the structure in a computer's memory---such huge structures need to be summarized. One such summary is the `base' of the structure, which we define below. 

\begin{definition}\label{base_etc}
The \indexedterm{span} of a collection of sets $\FFF$ is the collection of sets $\FFF'$ containing exactly those sets that are unions of sets in $\FFF$.  We say then that $\FFF$ spans $\FFF'$ and we write $\Span(\FFF)=\FFF'$. So, $\Span(\FFF)$ is necessarily $\cup$-closed. A \indexedterm{base} of a $\cup$-closed collection~$\SSS$ of sets is a minimal subcollection $\BBB$ of $\SSS$ spanning $\SSS$---where ``minimal'' refers to inclusion, that is, if $\Span(\HHH)=\SSS$ for some $\HHH\subseteq \BBB$, then necessarily $\BBB\subseteq \HHH$.  
\EDE\end{definition}

Note that by a common convention, the empty set is the union of zero set in $\BBB$.  Accordingly, the empty set never belongs to a base.
It is easily shown that the base of a knowledge space is unique when it exists.  Also, any finite knowledge space has a base \authindex{Doignon, J.-P.}\authindex{Falmagne, J.-Cl.}\citep[see Theorems 3.4.2 and 3.4.4 in][]{Falmagne_Doignon_LS}.  However, some $\cup$-closed collection of sets have no base; an example is the collection of all the open subsets of the set of real numbers. Any learning space has a base because it is finite and $\cup$-closed (cf.~Theorem~\ref{big_equivalence}, (i) $\Leftrightarrow$ (iii)).  
  
For example, the base of the learning space $\KKK^{(2)}$ displayed in Figure~\ref{fig-five-examples} is
\begin{equation}
\{\; \{a\},\, \{b\},\; \{c\},\; \{a,\, b,\, d\},\; \{a,\, c,\, d\}\;\}. 
\end{equation}
The economy is not great in  this little example but may become spectacular in the case of the very large structures encountered in practice.  
  
Another reason for the importance of the base stems from a pedagogical concept. The relevant question is: ``\textsl{Given some item~$q$, which minimal state, or states, must be mastered for $q$ to be mastered?}''.
In more direct words: ``\textsl{what are the minimal states containing a given item $q$?}''.   As one might guess, these minimal sets coincide with the elements of the base.

\begin{definition}\label{atoms}
Let $\KKK$ be a knowledge space. For any item $q$, an \term{atom at} $q$ is a minimal state of $\KKK$ containing $q$.  A state $K$ is called an \indexedterm{atom} if $K$ is an atom at $q$ for some item $q$.  A knowledge space is \subjindex{granular (knowledge space)}\term{granular} if for any item $q$ and any state $K$ containing $q$, there is an atom at $q$ which is included in~$K$. 
\EDE\end{definition}

Clearly, any finite knowledge space is granular.  On the other hand, a state $K$ is an atom in a knowledge space $\KKK$ if and only if any subcollection of states $\FFF$ such that $K=\cup \FFF$ contains $K$ \authindex{Doignon, J.-P.}\authindex{Falmagne, J.-Cl.}\citep[cf.~Theorem 3.4.7 in][]{Falmagne_Doignon_LS}.  Note also that any granular knowledge space has a base \citep[cf.~Proposition~3.6.6 in][]{Falmagne_Doignon_LS}.
 
\begin{example}\label{exa_atoms}
For the ten-item learning space pictured in Figure~\ref{ten_items_figure} (page~\pageref{ten_items_figure}), there are two atoms at item $f$, namely
\begin{equation}
\{c,\, f,\, g,\, h,\, i,\, j\},\quad \{a,\, b,\, c,\, f,\, g,\, h,\, i\}.
\end{equation} 
You can check from the figure that these two sets are indeed minimal states containing $f$ and that they are the only ones with that property.  Note that there is just one atom at the item $b$, which is $\{b$, $g$, $h$, $i\}$, while there are three atoms at $d$.  Table~\ref{ta_atoms} displays the full information on the atoms.
\begin{table}
\caption[Atoms in the ten-item example]{The atoms at all the items of the ten-item learning space from Example~\ref{ten_items_example} (see Example~\ref{exa_atoms}).}
\label{ta_atoms}
\begin{equation*}
\renewcommand{\arraystretch}{1.5}
\renewcommand{\tabcolsep}{1mm}
\begin{array}{c@{\qquad}c}
\hline
\text{Items} & \text{Atoms}\\
\hline
a & \{a,\, g,\, h,\, i\}\\
b & \{b,g,h,i\}\\
c & \{c\} \\
d & \{b,c,d,f,g,h,i,j\},\; \{a,c,d,f,g,h,i,j\},\; \{a,b,c,d,g,h,i,j\}\\
e & Q\\
f & \{c,f,g,h,i,j\},\; \{a,b,c,f,g,h,i\}\\
g & \{g\}\\
h & \{h,i\},\; \{g,h\}\\
i & \{i\}\\
j & \{c,g,h,i,j\}\\
\hline
\end{array}
\renewcommand{\arraystretch}{1}
\end{equation*}
\end{table}
\EEX\end{example}  

We conclude this section with the expected result (a proof is given in  \authindex{Doignon, J.-P.}\authindex{Falmagne, J.-Cl.}\citealp{Falmagne_Doignon_LS}\footnote{The reader should refer to that monograph for most of the proofs omitted in this chapter.}).

\begin{theorem} \label{atom_is_base_set}
\sl Suppose that a knowledge space has a base. Then this base is exactly the collection of all the atoms.
\end{theorem}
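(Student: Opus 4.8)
The plan is to establish the two inclusions $\AAA\subseteq\BBB$ and $\BBB\subseteq\AAA$ separately, where I write $\BBB$ for the base and $\AAA$ for the collection of all atoms. I would build the whole argument on the union-irreducibility characterization recalled just above the theorem statement: a state $K$ is an atom if and only if every subcollection $\FFF\subseteq\KKK$ satisfying $K=\cup\FFF$ already contains $K$. With this in hand both directions become short.

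For $\AAA\subseteq\BBB$, I would take an atom $K$. Since $\BBB$ spans $\SSS=\KKK$ and $K$ is a state, there is a subcollection $\FFF\subseteq\BBB$ with $K=\cup\FFF$. By the characterization $K\in\FFF$, and since $\FFF\subseteq\BBB$ this gives $K\in\BBB$. Thus every atom belongs to the base.

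For $\BBB\subseteq\AAA$, I would take $K\in\BBB$ and argue by contradiction, assuming $K$ is not an atom. Then there is a subcollection $\FFF\subseteq\KKK$ with $K=\cup\FFF$ but $K\notin\FFF$, so every member of $\FFF$ is a proper subset of $K$. Because $\BBB$ spans $\KKK$, each member of $\FFF$ is itself a union of base elements, and every base element occurring in these representations is contained in a proper subset of $K$, hence differs from $K$. Collecting them yields a subcollection of $\BBB\setminus\{K\}$ whose union is $K$. Substituting this representation wherever $K$ appears in the base-expansion of an arbitrary state shows that $\BBB\setminus\{K\}$ already spans $\KKK$, which contradicts the minimality of $\BBB$ in Definition~\ref{base_etc} (taking $\HHH=\BBB\setminus\{K\}$ would force $\BBB\subseteq\BBB\setminus\{K\}$). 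Hence $K$ must be an atom. The two inclusions together give $\BBB=\AAA$.

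I expect the main obstacle to be precisely this second direction: one must verify that the failure of union-irreducibility can always be refined into a representation of $K$ using only base elements distinct from $K$, so that deleting $K$ preserves spanning. The fact that makes this go through is that every member of $\FFF$ is a \emph{proper} subset of $K$, which forces all base elements in their expansions to be proper subsets of $K$ and therefore different from $K$; without this observation the redundancy of $K$ in $\BBB$ would not follow.
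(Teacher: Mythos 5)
Your proof is correct; note that this chapter does not actually contain a proof of Theorem~\ref{atom_is_base_set}---it defers to the monograph of Falmagne and Doignon---so the only basis for comparison is the tool the chapter sets up right before the statement, namely the union-irreducibility characterization of atoms, which is precisely what you use. Your two inclusions (an atom, being a union of base elements, must itself lie in the base; and a non-atom base element could be rewritten as a union of base elements all distinct from it, so deleting it would leave a spanning subcollection, contradicting minimality) constitute exactly the standard argument given in the cited monograph, with the one delicate point---that all base elements occurring in the refined representation are proper subsets of $K$ and hence distinct from $K$---handled explicitly and correctly.
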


A simple algorithm, due to \authindex{Dowling, C.E.}\citet{Dowling:1993} and grounded on the concept of an atom, constructs the base of a finite knowledge space given the states. In the same paper, she also describes a more elaborate algorithm for efficiently building the span of a collection of subsets of a finite set. Both algorithms are sketched in \authindex{Doignon, J.-P.}\authindex{Falmagne, J.-Cl.}\citet[][pages 49--50]{Falmagne_Doignon_LS} (another algorithm for the second task, in another context, is due to Ganter; see \authindex{Ganter, B.}\authindex{Wille, R.}\citealp{Ganter_Reuter91})\authindex{Reuter, K.}. The concept of an atom is closely related to that of the `surmise system', which is the topic of our next section.  We complete the present section with a characterization of learning spaces through their atoms (\authindex{Koppen, M.}\citealp{Koppen1998}).

\clearpage

\begin{theorem}\label{pro_LS_through_atoms}
\sl For any finite knowledge structure $(Q,\KKK)$, the following three statements are equivalent:
\begin{enumerate}[\qquad{\rm(i)}]
\item $(Q,\KKK)$ is a learning space;
\item for any atom $A$ at item $q$, the set $A\setminus\{q\}$ is a state;
\item any atom is an atom at only one item.
\end{enumerate}
\end{theorem}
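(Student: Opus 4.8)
The plan is to prove the cyclic chain of implications (i)~$\Rightarrow$~(iii)~$\Rightarrow$~(ii)~$\Rightarrow$~(i), exploiting the characterizations already established in Theorem~\ref{big_equivalence} and Theorem~\ref{pro_learning_spaces}. The key conceptual hinge is that, for a finite knowledge space, the atoms constitute precisely the base (Theorem~\ref{atom_is_base_set}), and that a learning space is the same thing as a well-graded, $\cup$-closed knowledge structure. I would therefore freely use downgradability and condition~(c) of Theorem~\ref{pro_learning_spaces} whenever the structure is known to be a learning space.

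First I would prove (i)~$\Rightarrow$~(iii). Suppose $(Q,\KKK)$ is a learning space, and let $A$ be an atom, that is, a minimal state containing some item $q$. Applying downgradability to the nonempty state $A$, there is an item $p\in A$ with $A\setminus\{p\}\in\KKK$; since $A$ is minimal among states containing $q$, and $A\setminus\{p\}$ is a strictly smaller state, we must have $q\notin A\setminus\{p\}$, forcing $p=q$. The real content is uniqueness: I claim $A$ is an atom \emph{only} at $q$. Suppose $A$ were also an atom at a distinct item $r$. Then $A$ is minimal containing $r$, so $A\setminus\{q\}$ (which is a state and omits $q\neq r$, hence still contains $r$) would contradict minimality of $A$ at $r$. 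Thus $A$ is an atom at exactly one item, giving (iii). Notice this argument simultaneously shows (i)~$\Rightarrow$~(ii), since we produced $A\setminus\{q\}\in\KKK$; but to respect the stated cycle I would route (ii) through (iii).

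Next, (iii)~$\Rightarrow$~(ii). Assume every atom is an atom at only one item; I want each atom $A$ at $q$ to satisfy $A\setminus\{q\}\in\KKK$. Here the main obstacle lies: I cannot yet invoke downgradability, because I have not assumed $(Q,\KKK)$ is a learning space---I only know it is a finite knowledge structure. The natural route is to argue within the $\cup$-closed setting by considering the collection of states properly contained in $A$ and taking their union $U=\bigcup\{K\in\KKK\st K\subsetneq A\}$, which is a state by $\cup$-closure and satisfies $U\subseteq A$. Since $A$ is an atom at $q$, no proper substate contains $q$, so $q\notin U$, whence $U\subseteq A\setminus\{q\}$. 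The crux is to show $U=A\setminus\{q\}$: if some $r\in A$, $r\neq q$, lay outside $U$, then $A$ would be a minimal state containing $r$ as well, making $A$ an atom at $r$ too and violating the single-item assumption~(iii). Hence $U=A\setminus\{q\}\in\KKK$, giving~(ii). (This step tacitly requires $\KKK$ to be $\cup$-closed; if the theorem intends only a bare finite knowledge structure, one must first derive $\cup$-closure, which is delicate---I would flag this as the point needing the most care.)

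Finally, (ii)~$\Rightarrow$~(i). Here I would use the granularity of finite knowledge spaces together with Theorem~\ref{pro_learning_spaces}, verifying condition~(b) (downgradability) directly: given any nonempty state $K$, pick an item $q\in K$ and an atom $A$ at $q$ contained in $K$ (granularity); then $A\setminus\{q\}$ is a state by~(ii), and one argues that $K$ itself admits the removal of a suitable item to stay in $\KKK$, ideally by an induction on $|K|$ peeling off the atom structure. Condition~(a) is immediate from finiteness, and condition~(c) would follow from the union-closure that the knowledge-space hypothesis supplies. The anticipated difficulty is ensuring that downgradability of every state---not just of atoms---follows cleanly from~(ii); I expect to need an inductive argument writing $K$ as a union of atoms and repeatedly applying~(ii), which is the technically heaviest part of the whole proof.
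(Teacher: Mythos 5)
Your cyclic plan (i)~$\Rightarrow$~(iii)~$\Rightarrow$~(ii)~$\Rightarrow$~(i) is sensible, and your step (i)~$\Rightarrow$~(iii) is complete and correct; note that the paper itself states this theorem without proof (it is attributed to Koppen, 1998, with omitted proofs deferred to the monograph), so there is no in-paper argument to compare against. Two points in your proposal need attention.

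First, the issue you flag about $\cup$-closure is not something that can be ``derived'': the equivalence is simply false for a bare finite knowledge structure, so $\cup$-closure has to be part of the hypothesis. Take $Q=\{a,b,c\}$ and $\KKK=\{\es,\{a\},\{b\},Q\}$. The atoms are $\{a\}$ (an atom at $a$ only), $\{b\}$ (at $b$ only), and $Q$ (at $c$ only, since $\{a\}$ and $\{b\}$ destroy minimality of $Q$ at $a$ and at $b$); hence (iii) holds. Yet $Q\setminus\{c\}=\{a,b\}\notin\KKK$, so (ii) fails, and $\KKK$ is not a learning space, so (i) fails as well. The statement must therefore be read with $(Q,\KKK)$ a finite knowledge \emph{space}, a reading consistent with Definition~\ref{atoms} (which defines atoms only for knowledge spaces) and with Theorem~\ref{pro_LS_through_surmise}, which is stated for spaces. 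Once that reading is adopted, your (iii)~$\Rightarrow$~(ii) argument via $U=\bigcup\{K\in\KKK \st K\subsetneq A\}$ is correct exactly as you wrote it.

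Second, there is a genuine gap in (ii)~$\Rightarrow$~(i): you correctly reduce it to downgradability (condition (b) of Theorem~\ref{pro_learning_spaces}, since (a) is finiteness and (c) follows at once from $\cup$-closure), but you never actually prove downgradability of an arbitrary nonempty state $K$; you only express the hope of ``an inductive argument''. The step can be closed in a few lines, without induction. Since $\es\in\KKK$ and $\KKK$ is finite, choose a state $M$ maximal among the states properly included in $K$, and pick any $q\in K\setminus M$. By granularity of finite knowledge spaces, there is an atom $A$ at $q$ with $A\subseteq K$. Then $M\cup A$ is a state satisfying $M\subsetneq M\cup A\subseteq K$, so maximality of $M$ forces $M\cup A=K$. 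By (ii), $A\setminus\{q\}\in\KKK$, and therefore $M\cup(A\setminus\{q\})$ is a state; but $M\cup(A\setminus\{q\})=K\setminus\{q\}$ (the inclusion $\subseteq$ holds because neither $M$ nor $A\setminus\{q\}$ contains $q$, and conversely any $x\in K\setminus\{q\}$ lies in $M$ or in $A\setminus\{q\}$ because $K=M\cup A$). Thus $K\setminus\{q\}\in\KKK$, which is condition (b), and the cycle is complete.
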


\section[Surmise Systems]{Surmise Systems\sectionmark{Surmise Systems}}
\sectionmark{Surmise Systems}
\label{Surmise Systems}

In a finite knowledge space, a student masters an item $q$ only when his state includes some atom $C$ at $q$. So, the collection of all the atoms at the various items may provide a new way to specify a knowledge space. We illustrate this idea by the following example of a knowledge space.

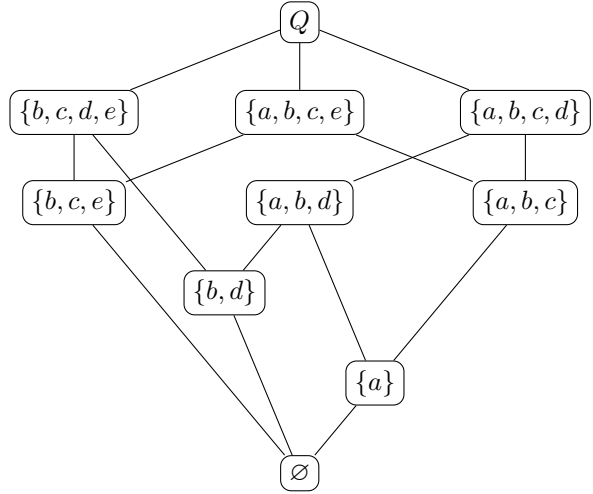
\begin{figure}[h]
\begin{center}
\begin{tikzpicture}[yscale=1.2,baseline=60pt,every node/.style={rectangle, draw, rounded corners}]
\node (es)   at (0,0)    {$\es$};
\node (a)    at ( 1,1)   {$\{a\}$};
\node (bd)   at (-1,2)   {$\{b,d\}$};
\node (bce)  at (-3,3)   {$\{b,c,e\}$};
\node (abd)  at ( 0,3)   {$\{a,b,d\}$};
\node (abc)  at ( 3,3)   {$\{a,b,c\}$};
\node (bcde) at (-3,4)   {$\{b,c,d,e\}$};
\node (abce) at ( 0,4)   {$\{a,b,c,e\}$};
\node (abcd) at ( 3,4)   {$\{a,b,c,d\}$};
\node (Q)    at ( 0,5) {$Q$};
\draw (Q) -- (bcde) -- (bce) -- (es) -- (bd) -- (bcde);
\draw (Q) -- (abce) -- (bce);
\draw (Q) -- (abcd) -- (abd) -- (a) -- (es);
\draw (abcd) -- (abc) -- (a);
\draw (bd) -- (abd);
\draw (abc) -- (abce);
\end{tikzpicture}
\end{center}
\caption[The covering diagram of Example~\ref{ex_HHH}]{The covering diagram of the knowledge space in Example~\ref{ex_HHH}.}
\label{fig_HHH}
\end{figure}

\begin{example}\label{ex_HHH}
Consider the knowledge space
\begin{align}\nonumber
\HHH = \bigl\{ \es, \{a&\}, \{b,d\}, \{a,b,c\}, \{a,b,d\}, \{b,c,e\},
  \\ \label{srmeqa}
&\{a,b,c,d\},\{a,b,c,e\},\{b,c,d,e\}, \{a,b,c,d,e\} \bigr\}
\end{align}
on the domain $Q = \{a,b,c,d,e\}$.  Figure~\ref{fig_HHH} provides its covering diagram, while Table~\ref{ta_HHH} lists its atoms.
\begin{table}[h!]
\caption[The atoms in Example~\ref{ex_HHH}]{Items and their atoms in the knowledge space of Equation~\eqref{srmeqa} (see also Figure~\ref{fig_HHH}).}
\label{ta_HHH}
\renewcommand{\arraystretch}{1.5}
\renewcommand{\tabcolsep}{1mm}
\begin{center}
\begin{tabular}{c@{\qquad}c}
\hline
Items&Atoms\\
\hline
$a$ &  $\{a\}$\\
$b$ &  $\{b,d\},\,\{a,b,c\},\,\{b,c,e\}$\\
$c$  & $\{a,b,c\},\,\{b,c,e\}$\\
$d$ & $\{b,d\}$\\
$e$  & $\{b,c,e\}$\\
\hline
\end{tabular}
\end{center}
\end{table}
Table~\ref{ta_HHH} links each of items $a$, $d$ and $e$ to a single atom of $\HHH$, and items $b$ and $c$ to three and two atoms, respectively. So, to master item $b$, one must first master either item $d$, or items $a$ and $c$, or items $c$ and $e$.
\EEX\end{example}

Example~\ref{ex_HHH} illustrates the following definition. 

\begin{definition} \label{srmthreeb}
Let $Q$ be a nonempty set of items. A function $\ss: Q \to 2^{2^Q}$ mapping each item $q$ in $Q$ to a nonempty collection $\ss(q)$  of subsets of $Q$ (so, $\ss(q)\neq\es$) is called an \indexedterm{attribution function} on the set $Q$. 
For each $q$ in $Q$, any $C$ in $\ss(q)$ is called a \subjindex{clause}\term{clause for} $q$ (in $\ss$). 
A \indexedterm{surmise function} $\ss$ on $Q$ is an attribution function on $Q$ which satisfies the three additional conditions, for all  $q,q' \in Q$, and $C,C' \subseteq Q$:
\begin{enumerate}
\item[]
\begin{enumerate}[(i)]
\item if $C \in \ss (q)$, then $q \in C$;
\item if $ q' \in C \in \ss (q)$, then $C' \subseteq C$ for some $C' \in
\ss (q')$;
\item if $C, C' \in \ss (q)$ and  $C' \subseteq C $, then $C = C'$.
\end{enumerate}
\end{enumerate}
In such a case, the pair $(Q,\ss )$ is a \indexedterm{surmise system}. A surmise system $(Q,\ss)$ is \subjindex{discriminative (surmise system)}\term{discriminative} if $\sigma$ is injective (that is: whenever $\ss(q) = \ss(q')$ for some $q,q'\in Q$, then $q =q'$). Then the surmise function $\ss$ is also called \term{discriminative}.
\EDE\end{definition}

It is easily shown that any attribution function $\ss$ on a set $Q$ defines a knowledge space $(Q,\KKK)$ via the equivalence
\begin{equation}\label{attribution_build_space}
K\in \KKK\quad \EQ\quad\forall q\in K,\,\exists C\in\ss(q):\, C\subseteq K. 
\end{equation}
In fact, we have the following extension of Birkhoff's Theorem~\ref{Birkhoff}\authindex{Birkhoff, G.} (it is an extension in the sense that the one-to-one correspondence we obtain extends the correspondence in Birkhoff's Theorem).  The result is due to \authindex{Doignon, J.-P.}\authindex{Falmagne, J.-Cl.}\citet{Doignon_Falmagne_1985}, who derive it from an appropriate `Galois connection'.\authindex{Galois, E.}

\begin{theorem} \label{surmise_theo} 
\sl There exists a one-to-one correspondence between the collection of all granular knowledge spaces $\KKK$ on a set $Q$ and the collection of all the surmise functions $\ss$ on $Q$.  One such correspondence is specified by the equivalence, for all $q$ in $Q$ and $A$ in $2^Q$,
\begin{equation}
A\,\,\text{is an atom at $q$ in $\KKK$}\quad\EQ\quad A\in\ss(q).  
\end{equation}
This one-to-one correspondence links discriminative knowledge spaces to discriminative surmise functions.
\end{theorem}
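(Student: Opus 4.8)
The plan is to display the two correspondences explicitly and prove they are mutually inverse. Given a granular knowledge space $\KKK$, define an attribution function $\ss_\KKK$ by letting $\ss_\KKK(q)$ be the collection of all atoms at $q$ (Definition~\ref{atoms}); conversely, given a surmise function $\ss$, let $\KKK_\ss$ be the collection of sets produced from $\ss$ through the equivalence~\eqref{attribution_build_space}, which the remark preceding the theorem already guarantees to be a knowledge space. The work then splits into four checks: that $\ss_\KKK$ is genuinely a surmise function, that $\KKK_\ss$ is granular, that $\ss_{\KKK_\ss}=\ss$, and that $\KKK_{\ss_\KKK}=\KKK$. The discriminative refinement is treated at the end.

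First I would verify that $\ss_\KKK$ satisfies the three clauses of Definition~\ref{srmthreeb} and maps into nonempty collections. Each $\ss_\KKK(q)$ is nonempty because $q\in Q\in\KKK$ and granularity supplies an atom at $q$ inside $Q$. Condition~(i) is immediate, since an atom at $q$ contains $q$. For~(ii), if $q'\in C$ with $C$ an atom at $q$, then $C$ is a state containing $q'$, so granularity hands us an atom at $q'$ contained in $C$. Condition~(iii) is precisely the minimality built into the notion of atom: two atoms at $q$, one included in the other, must coincide.

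The crux is the lemma that, for a surmise function $\ss$, the clauses at $q$ are exactly the atoms at $q$ of $\KKK_\ss$. I would first show each clause $C\in\ss(q)$ is a state of $\KKK_\ss$: given $p\in C$, condition~(ii) produces $C'\in\ss(p)$ with $C'\subseteq C$, which is exactly the membership criterion~\eqref{attribution_build_space}. Next, $C$ is minimal among states containing $q$: if a state $K\subseteq C$ contains $q$, then~\eqref{attribution_build_space} yields some $D\in\ss(q)$ with $D\subseteq K\subseteq C$, and condition~(iii) forces $D=C$, whence $K=C$. Thus every clause is an atom at $q$. Since, by~\eqref{attribution_build_space}, every state containing $q$ contains some clause at $q$, the space $\KKK_\ss$ is granular, and by minimality any atom at $q$ coincides with the clause it contains; this delivers $\ss_{\KKK_\ss}=\ss$ together with granularity of $\KKK_\ss$.

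The remaining round trip and the discriminative clause then close quickly. For $\KKK_{\ss_\KKK}=\KKK$, the inclusion $\KKK\subseteq\KKK_{\ss_\KKK}$ is just granularity of $\KKK$, while $\KKK_{\ss_\KKK}\subseteq\KKK$ follows because any $K$ in the constructed space equals $\bigcup_{q\in K}C_q$ with each $C_q\subseteq K$ an atom, and $\KKK$ is $\cup$-closed. For the refinement, I would note that in a granular space $K\in\KKK_q$ holds iff $K$ contains an atom at $q$, so the atoms at $q$ are the minimal members of $\KKK_q$; hence $\ss_\KKK(q)=\ss_\KKK(r)$ iff $\KKK_q=\KKK_r$, and $\KKK$ is discriminative exactly when $\ss_\KKK$ is injective. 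I expect the main obstacle to be the minimality half of the central lemma---establishing that a clause cannot properly contain a smaller state carrying the same item---since this is the one place where condition~(iii) is genuinely needed and where a careless argument risks circularly assuming what is being proved.
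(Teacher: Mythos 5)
Your proof is correct, and all four verifications go through: $\ss_\KKK$ is a surmise function (granularity giving clause~(ii), atom-minimality giving~(iii)), $\KKK_\ss$ is a granular knowledge space, and the two round trips $\ss_{\KKK_\ss}=\ss$ and $\KKK_{\ss_\KKK}=\KKK$ hold, the latter using $\cup$-closure exactly as you say; the minimality half of your central lemma is handled non-circularly via condition~(iii) of Definition~\ref{srmthreeb}, and the discriminative refinement is sound because in a granular space the atoms at $q$ are precisely the minimal members of $\KKK_q$, so that $\KKK_q=\KKK_r$ holds if and only if $\ss_\KKK(q)=\ss_\KKK(r)$. Be aware, though, that the chapter itself supplies no proof of Theorem~\ref{surmise_theo}: it attributes the result to \citet{Doignon_Falmagne_1985}, who obtain it from an appropriate Galois connection. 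In that derivation one sets up a pair of order-reversing maps between the (suitably quasi-ordered) collection of attribution functions and the collection of $\cup$-closed families, and identifies the closed elements on the two sides as exactly the surmise functions and the granular knowledge spaces; the bijection of the theorem is then the restriction of the connection to its closed pairs. Your route---exhibiting the two maps concretely and checking that they are mutually inverse---is more elementary and self-contained, and it makes explicit where each of the axioms (i)--(iii) and granularity are actually needed. What the Galois-connection route buys instead is uniformity: it situates this correspondence and Birkhoff's Theorem~\ref{Birkhoff} (quasi-ordinal spaces versus quasi orders) as instances of one phenomenon, and it also accounts for what happens to attribution functions that are not surmise functions, namely that many of them collapse onto the same knowledge space under Equation~\eqref{attribution_build_space}.
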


The correspondence between  knowledge spaces and surmise functions is suggestive of a practical method for building a knowledge space or even a learning space, based on analyzing large sets of learning data. We describe such a method in Section~\ref{sec_Building}.

A characterization of learning spaces through their surmise functions derives directly from Theorem~\ref{pro_LS_through_atoms}. 

\begin{theorem}\label{pro_LS_through_surmise}
\sl A finite knowledge space $(Q,\KKK)$ is a learning space if and only if the corresponding surmise system (as in Theorem~\ref{surmise_theo}) has the property that any clause is a clause for only one item. 
\end{theorem}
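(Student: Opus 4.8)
The plan is to reduce this statement to Theorem~\ref{pro_LS_through_atoms} by means of the dictionary supplied by Theorem~\ref{surmise_theo}. The crucial point is that a \emph{clause} and an \emph{atom} are literally the same object once one passes through the correspondence. So essentially no new combinatorial work is needed: the theorem is a translation of condition (iii) of Theorem~\ref{pro_LS_through_atoms} into the language of surmise functions.

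First I would observe that, since $(Q,\KKK)$ is a finite knowledge space, it is granular (as noted just after Definition~\ref{atoms}), so Theorem~\ref{surmise_theo} indeed assigns to $\KKK$ a surmise function $\ss$. Next I would invoke the defining equivalence of that correspondence: for every $q\in Q$ and every $A\subseteq Q$, the set $A$ is a clause for $q$ in $\ss$ (i.e.\ $A\in\ss(q)$) if and only if $A$ is an atom at $q$ in $\KKK$. Reading this across all items simultaneously, a set $A$ is a clause for more than one item in $\ss$ exactly when $A$ is an atom at more than one item in $\KKK$. Consequently, the property ``\emph{any clause is a clause for only one item}'' holds for $(Q,\ss)$ precisely when the property ``\emph{any atom is an atom at only one item}'' holds for $(Q,\KKK)$.

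Finally I would close the argument by quoting the equivalence (i)~$\Leftrightarrow$~(iii) of Theorem~\ref{pro_LS_through_atoms}, which says that a finite knowledge structure is a learning space if and only if any atom is an atom at only one item. Chaining this with the translation of the previous paragraph yields that $(Q,\KKK)$ is a learning space if and only if every clause of $\ss$ is a clause for a single item, which is exactly the assertion to be proved.

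I do not anticipate a genuine obstacle here; the only point requiring care is to confirm that the correspondence of Theorem~\ref{surmise_theo} matches clauses with atoms \emph{item by item}, so that the multiplicity condition transfers faithfully. Since that correspondence is stated as the equivalence ``$A$ is an atom at $q$ $\EQ$ $A\in\ss(q)$'', this is immediate, and the result follows.
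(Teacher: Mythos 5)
Your proposal is correct and follows exactly the route the paper intends: the paper derives Theorem~\ref{pro_LS_through_surmise} directly from Theorem~\ref{pro_LS_through_atoms} via the clause--atom correspondence of Theorem~\ref{surmise_theo}, which is what you do (including the necessary observation that a finite knowledge space is granular, so the correspondence applies). No gaps; your item-by-item translation of ``clause for only one item'' into ``atom at only one item'' is precisely the intended argument.
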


In the case of finite, partially ordinal spaces, a highly efficient summary of the space takes the form of the `Hasse diagram' of the partial order.  Attempts to extend the notion of a Hasse diagram from partially ordered sets to surmise systems are reported in \authindex{Doignon, J.-P.}\authindex{Falmagne, J.-Cl.}\citet{Doignon_Falmagne_KS} and \citet{Falmagne_Doignon_LS}.

\section[The Fringe Theorem]{The Fringe Theorem\sectionmark{The Fringe Theorem}}
\sectionmark{The Fringe Theorem}
\label{fringe_theorem}
The final result of a standardized test is a numerical 
score\footnote{Or a couple of such scores, in the case of a multidimensional model.}.  In the case of an assessment in the framework of a learning space, the result is a knowledge state which may contain hundreds of items.  Fortunately, a meaningful summary of that state can be given in the form of its `inner fringe' and `outer fringe'.
 
In the ten-item Example~\ref{ten_items_example}, consider the state 
$\{c$, $g$, $h$, $i$, $j\}$, which is printed in bold in the covering graph of Figure~\ref{ten_items_figure} (page \pageref{ten_items_figure}).  Figure~\ref{part_ten_items_figure_extract} reproduces the relevant part of the graph, in particular all the adjacent states.  From the state $\{c$, $g$, $h$, $i$, $j\}$, only three items are learnable\footnote{We mean directly learnable without requiring the mastery of any other item outside $\{c$, $g$, $h$, $i$, $j\}$.}, which are $a$, $b$ and $f$ (we mark them on their respective lines).  On the other hand, the only way to reach state $\{c$, $g$, $h$, $i$, $j\}$ is to learn item $j$ from the state $\{c$, $g$, $h$, $i\}$ (which is the unique state giving access to $\{c$, $g$, $h$, $i$, $j\}$).  The two sets of items $\{j\}$ and $\{a$, $b$, $f\}$ completely specify the state $\{c$, $g$, $h$, $i$, $j\}$ among all the states in the structure\footnote{In this particular case, the two-set summary is barely more concise than the original state.  However, in realistic learning spaces containing millions of states, the summary may be considerably smaller than the state.}; this is a remarkable property of learning spaces which we now formalize.

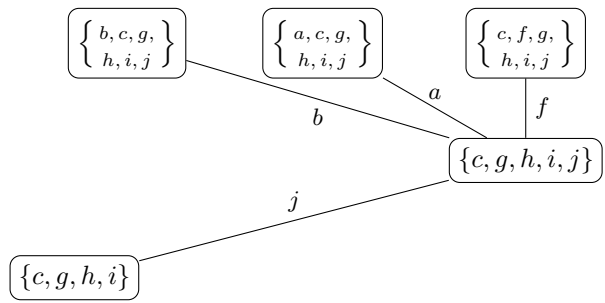
\begin{figure}[t]
\begin{center}
\begin{tikzpicture}[yscale=1.56]
\begin{scope}[every node/.style={rectangle, draw, rounded corners}]
\node (cghi) at (-2,4) {$\{c,g,h,i\}$};
\node (cghij) at (4,5) {$\{c,g,h, i,j\}$};
\node (cfghij) at (4,6) {$\left\{\begin{smallmatrix}\strut c,\, f,\, g,\\ h,\, i,\, j\end{smallmatrix}\right\}$};
\node (bcghij) at (-1.3,6) {$\left\{\begin{smallmatrix}\strut b,\,c,\,g,\\ h,\, i,\,j\end{smallmatrix}\right\}$};%
\node (acghij) at ( 1.3,6) {$\left\{\begin{smallmatrix}\strut a,\,c,\,g,\\ h,\,i,\,j\end{smallmatrix}\right\}$};
\end{scope}
\begin{scope}[every node/.style={font=\small, midway,anchor=text}] 
\draw (cghi) -- node[above]{$j$} (cghij);
\draw (cghij) --node[above]{$a$} (acghij);
\draw (cghij) --node[below]{$b$} (bcghij);
\draw (cghij) --node[right]{$f$} (cfghij);
\end{scope}
\end{tikzpicture}
\end{center}
\caption[In the ten-item example, inner fringe and outer fringe of the state $\{c$, $g$, $h$, $i$, $j\}$]{Part of Figure~\ref{ten_items_figure} showing the items in the inner fringe and outer fringe of the state $\{c$, $g$, $h$, $i$, $j\}$ (see Definition~\ref{fringe_def}).}
\label{part_ten_items_figure_extract}
\end{figure}

\begin{definition}\label{fringe_def}
Let $(Q,\KKK)$ be a knowledge structure. The \indexedterm{inner fringe} of a state $K$ in $\KKK$ is the set of items 
\begin{equation}
K^\III=\{ q\in K \;\st\; K \setminus \{q\} \in \KKK \}.
\end{equation}
The \indexedterm{outer fringe} of a state $K$ is the set of items
\begin{equation}
K^\OOO=\{q\in Q \setminus K \;\st\; K \cup \{q\} \in\KKK \}.
\end{equation}
\EDE\end{definition}

Note that the empty state $\es$ always has an empty inner fringe, and that the whole domain $Q$ has an empty outer fringe.

\begin{theorem}\label{fringe_theo}  
\sl In a learning space, any state is specified by 
the pair formed of its inner fringe and its outer fringe.
\end{theorem}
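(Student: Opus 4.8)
The plan is to prove the statement in its injective form: I will take two states $K$ and $L$ of the learning space with $K^\III = L^\III$ and $K^\OOO = L^\OOO$, and show $K = L$. Arguing by contradiction, I would suppose $K \neq L$, so that $K \bigtriangleup L \neq \varnothing$. The engine of the argument is wellgradedness. By Theorem~\ref{big_equivalence} a learning space is a well-graded knowledge space, so Definition~\ref{space_wg_downgrad} supplies a sequence of states $K = K_0, K_1, \dots, K_h = L$ with $h = |K \bigtriangleup L| \geq 1$ and $|K_{i-1} \bigtriangleup K_i| = 1$ for $1 \leq i \leq h$.

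First I would observe that such a tight sequence is necessarily a geodesic: since exactly $h = |K \bigtriangleup L|$ single-element toggles carry $K$ to $L$, each item of $K \bigtriangleup L$ must be toggled an odd number of times and each item outside $K \bigtriangleup L$ an even number of times, so the total number of toggles is at least $|K \bigtriangleup L| = h$, with equality forcing every item of $K \bigtriangleup L$ to be toggled exactly once and no other item to be touched at all. In particular the first step changes $K$ by a single item $q \in K \bigtriangleup L$, and, according to the $K$-status of $q$, either $K_1 = K \cup \{q\}$ with $q \in L \setminus K$, or $K_1 = K \setminus \{q\}$ with $q \in K \setminus L$.

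I would then split into these two cases and read off a contradiction directly from the fringe definitions (Definition~\ref{fringe_def}). If $K_1 = K \cup \{q\}$ is a state with $q \notin K$, then $q \in K^\OOO$; but $q \in L$ forces $q \notin L^\OOO$, contradicting $K^\OOO = L^\OOO$. Dually, if $K_1 = K \setminus \{q\}$ is a state with $q \in K$, then $q \in K^\III$, while $q \notin L$ gives $q \notin L^\III$, contradicting $K^\III = L^\III$. Either way we reach a contradiction, so $K = L$, and the pair of fringes indeed determines the state.

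The step I expect to be the main obstacle is the geodesic observation, namely justifying that the wellgradedness path never toggles an item outside $K \bigtriangleup L$, so that the first toggled item lies in the symmetric difference with the correct orientation (added versus removed). Once this is pinned down, the matching ``addition $\leftrightarrow$ outer fringe, deletion $\leftrightarrow$ inner fringe'' is immediate, and each fringe hypothesis handles exactly one orientation of the first toggle. I note in passing that, because a learning space is $\cup$-closed, one could instead apply Learning Smoothness~[L1] of Definition~\ref{def_ls_axioms} to $K \subset K \cup L$ and to $L \subset K \cup L$ and conclude that the outer fringe alone already separates $K$ from $L$; the symmetric wellgradedness argument above is the cleaner route and has the advantage of extending verbatim to arbitrary well-graded knowledge structures.
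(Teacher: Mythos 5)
The paper itself states this theorem without proof (it is one of the results deferred to the Falmagne--Doignon monograph), so there is no in-paper argument to compare against; judged on its own merits, your proof is correct. The sequence of length $h=|K\bigtriangleup L|$ supplied by Definition~\ref{space_wg_downgrad} is indeed a geodesic by exactly the parity count you give (every item of $K\bigtriangleup L$ is toggled an odd number of times, every other item an even number of times, and the total is $h$), so the first step toggles an item of $K\bigtriangleup L$ with the correct orientation, and your two cases then contradict $K^\OOO=L^\OOO$ and $K^\III=L^\III$ respectively. This is essentially the standard argument, and note that it uses only wellgradedness, so it proves the statement for every well-graded knowledge structure, not just learning spaces. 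Your closing remark is also correct and is a genuine strengthening: in a learning space proper, $\cup$-closure together with [L1] applied to $K\subseteq K\cup L$ and $L\subseteq K\cup L$ shows that the outer fringe alone determines the state. The inner fringe alone does not suffice, as shown by the learning space $\{\es,\{a\},\{b\},\{a,b\},\{a,c\},\{b,c\},\{a,b,c\}\}$, in which the distinct states $\{a,c\}$ and $\{b,c\}$ share the inner fringe $\{c\}$; this asymmetry explains why the theorem is stated with the pair of fringes.
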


Theorem~\ref{fringe_theo} has the following important consequence.  In any learning space, the knowledge state uncovered by an assessment can be reported as two sets of items: those in its inner fringe and those in its outer fringe.
The outer fringe is especially important because, assuming that the learning space is a faithful representation of the cognitive organization of the material, it tells us exactly what the student is ready to learn. We will see in Section~\ref{applications} that this information is accurate in real-life: the probability that a student actually succeeds in learning an item picked in the outer fringe of his or her state, estimated on the basis of hundreds of thousand \subjindex{\aleks}\aleks{} assessments, is about $.93$ (see page \pageref{outerfringe_success}). 

\section[Learning Words and Learning Strings]{Learning Words and Learning strings
\sectionmark{Learning Words and Learning Strings}}
\sectionmark{Learning Words and Learning Strings}
\label{learning_strings}
In a learning space, a learner can reach any state by learning its items one at a time---but not in any order. Let us look at an example.

\begin{example}\label{ex_another_example}
A learning space on the domain $Q=\{a,b,c,d\}$ is described by its covering diagram in Figure~\ref{fig_another_example}.

\begin{figure}[ht]
\begin{center}
\begin{tikzpicture}[yscale=1.2,baseline=60pt,every node/.style={rectangle, draw, rounded corners}]
\node (es)  at (0,0) {$\es$};
\node (a)   at (-1,1) {$\{a\}$};
\node (b)   at ( 1,1) {$\{b\}$};
\node (ab)  at (-1,2) {$\{a,b\}$};
\node (bd)  at ( 1,2) {$\{b,d\}$};
\node (abc) at (-2,3) {$\{a,b,c\}$};
\node (abd) at ( 0,3) {$\{a,b,d\}$};
\node (bcd) at ( 2,3) {$\{b,c,d\}$};
\node (Q)   at ( 0,4) {$Q$};
\draw (es) -- (a) -- (ab) -- (abc) -- (Q);
\draw (es) -- (b) -- (ab) -- (abd) -- (Q);
\draw (b) -- (bd) -- (abd);
\draw (bd) -- (bcd) -- (Q);
\end{tikzpicture}
\end{center}
\caption[The learning space in Example~\ref{ex_another_example}]{The covering diagram of the learning space in Example~\ref{ex_another_example}.}
\label{fig_another_example}
\end{figure}
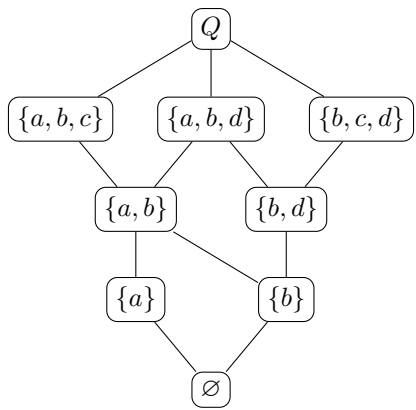

The state $\{a$, $b$, $d\}$ can be reached by mastering the items in three possible successions, which we also call `words' (see Definition~\ref{def_learning_string}):
\begin{align*}
& a\, b\, d,\\
& b\, a\, d,\\
& b\, d\, a.
\end{align*}
For the mastery of the whole domain $Q$, there are 6 `strings' in all:
\begin{align*}
& a\, b\, c\, d,\\
& a\, b\, d\, c,\\
& b\, a\, c\, d,\\
& b\, a\, d\, c,\\
& b\, d\, a\, c,\\
& b\, d\, c\, a.
\end{align*}
\hfill\EEX\end{example}

All of the words and strings in Example~\ref{ex_another_example} share a self-explanatory property:  for any of their `prefixes', the items appearing in the prefix form a knowledge state.  
Let us define the new terminology.

\begin{definition}\label{def_learning_string}
Given some finite set $Q$, a \indexedterm{word} on $Q$ is any injective mapping $f$ from $\{1$, $2$, \dots, $k\}$ to $Q$, for some $k$ with $0\le k \le|Q|$; the case $k=0$ produces the \indexedterm{empty word}.  With $f(i)=w_i$ for $1 \le i \le k$, we write the word $f$ as $w=w_1\,w_2\,\cdots w_k$, and we call $k$ the \subjindex{length (of a word)}\term{length} of the word $w$.   A \subjindex{prefix (of a word)}\term{prefix} of $w$ is a word $w_1\,w_2\,\cdots w_i$, where $0 \le i \le k$ (thus any word is a prefix of itself).  If an item $q$ does not appear in $w$, the \subjindex{concatenation (of a word and an item)}\term{concatenation} of $w$ with $q$ is the word $w\,q=w_1\,w_2\,\cdots\,w_k\,q$.  A  
\indexedterm{string} on $Q$ is a word of length $|Q|$.  Notice that our words (and strings) do not involve repetitions (because of the required injectivity of $f$), a reason for which some authors rather speak of ``simple words'' (as for example \authindex{Boyd, E.A.} \authindex{Faigle, U.}\citealp{Boyd_Faigle_1990}).  
Any word $w= w_1\,w_2 \cdots\,w_k$ \subjindex{determines (a word determines)}\term{determines} the set $\widetilde w=\{w_1$, $w_2$, \dots, $w_k\}$ (if all words are counted, $k!$ of them determine the same set of size $k$).  

Let $(Q,\KKK)$ be a finite knowledge structure with $|Q|=m$. 
A \indexedterm{learning word} (in $(Q,\KKK)$) is a word $w$ on $Q$ such that for each of its prefixes $v=w_1\,w_2\, \cdots\, w_i$ the subset $\widetilde v=\{w_1$, $w_2$, \dots, $w_i\}$ is a state in $\KKK$; here $0 \le i \le k$ if $k$ is the length of $w$.  
A \indexedterm{learning string} is such a learning word with $k=m$.
\EDE\end{definition}

\authindex{Korte, B.}\authindex{Lov\'asz, L.}\authindex{Schrader, R.}\citet{Korte_Lovasz_Schrader_1991} use the expression ``shelling sequence'' for our ``learning word'', and the expression ``basic word'' for our ``learning string'', while \authindex{Eppstein, D.}\citet{Eppstein_2013a} uses ``learning sequence'' for  ``learning string''.


General knowledge spaces can be without any learning string (for instance, it is the case when $\KKK=\{\es,Q\}$ as soon as $|Q|\ge 2$).  The axioms of learning spaces are consistent with the existence of (many) learning strings and words.  In fact, learning spaces can be recognized from properties of the collection of their learning strings (see next theorem) or the collection of their learning words (see Theorem~\ref{theo_learning_word}). 

\begin{theorem}\label{theo_learning_string}  
\sl Let $Q$ be a finite set, with $|Q|=m$.  A nonempty collection $\SSS$ of strings on $Q$ is the collection of all learning strings of some learning space on $Q$ if and only if $\SSS$ satisfies the three conditions below:
\begin{enumerate}[\quad\rm(i)]
\item any item appears in some string of $\SSS$;
\item 
if $u$ and $v$ are two strings in $\SSS$ such that for some $k$ in $\{1$, $2$, \dots, $m-1\}$ we have 
\begin{equation}
\{\, u_1,\, u_2,\, \dots,\, u_{k-1}\, \} \;=\; \{\, v_1,\, v_2,\, \dots,\, v_{k-1}\,\} \quad \text{and} \quad u_{k}\neq v_k,
\end{equation}
then 
\begin{equation}
u_1\, u_2\, \cdots\, u_{k-1}\, u_{k}\, v_{k}
\end{equation}
is a prefix of some string in $\SSS$;
\item if $u$ and $v$ are two strings in $\SSS$ such that for some $k$ in $\{0$, $1$, \dots, $m-1\}$ and some item $q$ we have 
\begin{equation}
\{v_1,\, v_2,\, \dots, \, v_{k-1},\, v_k,\, v_{k+1}\} \setminus \{u_1,\, u_2,\, \dots,\, u_k\} \;=\; \{q\},
\end{equation}
then $u_1\,u_2\,\cdots\,u_k\,q$ is a prefix of some string in $\SSS$.
\end{enumerate}
\end{theorem}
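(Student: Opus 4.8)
The plan is to prove the two implications separately, and on the $\Leftarrow$ side to build the obvious candidate structure and certify it via Theorem~\ref{pro_learning_spaces}. Throughout I would exploit that a \emph{string}, being a word of length $m=|Q|$ without repetition, is a bijection onto $Q$; hence Condition~(i) holds automatically (every item occurs in every string) and carries no real content—it is listed only for parallelism with the word version of Theorem~\ref{theo_learning_word}. The substance is in (ii) and (iii).

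For the forward implication, suppose $\SSS$ is the collection of learning strings of a learning space $(Q,\KKK)$. To verify (ii), take $u,v\in\SSS$ with common prefix set $K=\{u_1,\dots,u_{k-1}\}=\{v_1,\dots,v_{k-1}\}$ and $u_k\neq v_k$. Then $K\cup\{u_k\}$ and $K\cup\{v_k\}$ are states (prefix sets of learning strings), so condition~(c) of Theorem~\ref{pro_learning_spaces} gives $K\cup\{u_k,v_k\}\in\KKK$; following the prefix of $u$ up to $K\cup\{u_k\}$, appending $v_k$, and then extending from $K\cup\{u_k,v_k\}$ to $Q$ by Learning Smoothness~[L1] produces a learning string with prefix $u_1\cdots u_{k-1}u_kv_k$. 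To verify (iii), the hypothesis $\{v_1,\dots,v_{k+1}\}\setminus\{u_1,\dots,u_k\}=\{q\}$ forces, by a size count, $\{u_1,\dots,u_k\}\subseteq\{v_1,\dots,v_{k+1}\}=\{u_1,\dots,u_k\}\cup\{q\}$, so $q$ is learnable from the state $B=\{u_1,\dots,u_k\}$; following the prefix of $u$ to $B$, appending $q$, and extending to $Q$ by [L1] gives a string with prefix $u_1\cdots u_k\,q$.

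For the converse, assume (i)--(iii) and set $\KKK=\{\,\widetilde v : v\text{ is a prefix of some string in }\SSS\,\}$. Then $\es,Q\in\KKK$, and $\KKK$ is downgradable, since any nonempty state $\{w_1,\dots,w_j\}$ drops its last item to yield $\{w_1,\dots,w_{j-1}\}\in\KKK$. The crux is an extension property that Condition~(iii) delivers directly (call it Lemma~B): if $p=p_1\cdots p_i$ is a prefix of some $s\in\SSS$ with $\widetilde p=K$, and $K\cup\{q\}\in\KKK$, then $p_1\cdots p_i\,q$ is a prefix of some string in $\SSS$. Indeed $K\cup\{q\}\in\KKK$ means $K\cup\{q\}=\{v_1,\dots,v_{i+1}\}$ for some $v\in\SSS$, and applying (iii) to $u=s$ (whose first $i$ letters \emph{are} $p$) and this $v$ with index $i$ gives exactly $p_1\cdots p_i\,q$ as a prefix. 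Lemma~B yields condition~(c) of Theorem~\ref{pro_learning_spaces} for $\KKK$: given $K,K\cup\{q\},K\cup\{r\}\in\KKK$ with $q\neq r$, realize $K$ by a prefix $p$ of some string, invoke Lemma~B twice to get $u,v\in\SSS$ whose first $|K|$ letters both equal $p$ with $u_{|K|+1}=q$ and $v_{|K|+1}=r$, and then apply (ii) to obtain $p\,q\,r$ as a prefix, so $K\cup\{q,r\}=\widetilde{p\,q\,r}\in\KKK$. With finiteness, downgradability, and (c) in hand, Theorem~\ref{pro_learning_spaces} certifies that $(Q,\KKK)$ is a learning space.

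It remains to show its learning strings are exactly $\SSS$. The inclusion $\SSS\subseteq\{\text{learning strings}\}$ is immediate, since every prefix set of a string in $\SSS$ lies in $\KKK$ by construction. For the reverse inclusion, let $w$ be a learning string of $(Q,\KKK)$ and prove by induction on $i$ that $w_1\cdots w_i$ is a prefix of some string in $\SSS$: the base $i=0$ uses $\SSS\neq\es$, and the inductive step is one application of Lemma~B, because $\{w_1,\dots,w_i\}$ and $\{w_1,\dots,w_{i+1}\}$ are both states; at $i=m$ this gives $w\in\SSS$. I expect the main obstacle to be precisely the recognition that (iii) \emph{is} Lemma~B: a naive construction founders on the fact that one state $K$ is ordered in many different ways by different strings, so one needs a principled way to \emph{continue a prescribed ordering} by a newly learnable item. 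Once one sees that (iii) supplies exactly this continuation, while (ii) supplies the ``fan-out'' required for condition~(c), both directions become routine.
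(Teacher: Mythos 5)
Your proof is correct and takes essentially the same route as the paper's: necessity via Learning Smoothness and Learning Consistency (your use of condition~(c) of Theorem~\ref{pro_learning_spaces} in place of [L2] is immaterial), and sufficiency by forming the collection of prefix-determined sets and certifying it through Theorem~\ref{pro_learning_spaces}, with Condition~(iii) supplying the continuation step and Condition~(ii) the fan-out. The only differences are cosmetic and in fact welcome: you note that Condition~(i) is automatic for strings, you isolate the continuation property as an explicit lemma, and you spell out the closing induction showing that the learning strings of the constructed space are exactly $\SSS$ --- a step the paper dispatches in one sentence.
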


\begin{proof}
(Necessity.)  Assume $(Q,\LLL)$ is a learning space with $|Q|=m$, and denote by $\SSS$ the collection of its learning strings.   
By Learning Smoothness $\SSS$ is nonempty and each item of $Q$ appears in some string in $\SSS$, so Condition~(i) is true.  If the hypothesis of Condition~(ii) holds, then $\{u_1$, $u_2$, \dots, $u_{k-1}\}$, $\{u_1$, $u_2$, \dots, $u_{k-1}$, $u_{k}\}$ and $\{u_1$, $u_2$, \dots, $u_{k-1}$, $v_k\}$ are all states of $\LLL$.  Hence by Learning Consistency $\{u_1$, $u_2$, \dots, $u_{k-1}$, $u_{k}$, $v_k\}$ is also a state, which we denote by $L$.  On the other hand, by Learning Smoothness, there is a sequence $L_{k+1}$, $L_{k+2}$, \dots, $L_m$ of states with $L_{k+1}=L$, $L_m=Q$, and $|L_i \setminus L_{i-1}|=1$ for $i=k+2$, $k+3$, \dots, $m$.  Taking $w_i$ as the item in  $L_i \setminus L_{i-1}$, we obtain the learning string $u_1\,u_2\,\cdots\,u_{k-1}\,u_{k}\,v_k\,w_{k+2}\,w_{k+3}\,\cdots\,w_{m}$.  Thus Condition~(ii) holds.

Now suppose the strings $u$ and $v$ fulfil the assumption in Condition~(iii).  Thus $\{v_1$, $v_2$, \dots, $v_{k+1}\}$ is a state, that we call $L$.
By Learning Smoothness, there is a sequence $L_{k+1}$, $L_{k+2}$, \dots, $L_m$ of states in $\LLL$ with $L_{k+1}=L$, $L_m=Q$ and $|L_i \setminus L_{i-1}|=1$ for $i=k+2$, $k+3$, \dots, $m$.  With $\{w_i\}=L_i \setminus L_{i-1}$, we obtain the learning string 
$$
u_1\,u_2\,\cdots\,u_{k-1}\,u_{k}\,q\,w_{k+2}\,w_{k+3}\,\cdots\,w_{m}.
$$
Hence Condition~(iii) holds.

\medskip

(Sufficency.) Given a collection $\SSS$ of strings on $Q$ satisfying (i)--(iii), we call $\LLL$ the collection of all prefixes of strings in $\SSS$.  Then $\es$ and $Q$ are in $\LLL$.  Moreover, $\LLL$ clearly satisfies downgradability, that is Condition~(b)  in Theorem~\ref{pro_learning_spaces}.   To establish that $\LLL$ satisfies Condition~(c), let $K$, $K\cup\{q\}$ and $K\cup\{r\}$ be in $\LLL$.  There exist strings $u$ and $v$ such that $\{u_1$, $u_2$, \dots, $u_{|K|}\}=K$ and $\{v_1$, $v_2$, \dots, $v_{|K|+1}\}=K\cup\{q\}$.  Then by Condition~(iii) $u_1\,u_2\,\cdots\,u_{|K|}\,q$ is a prefix of some string in $\SSS$.  A similar argument shows that $u_1\,u_2\,\cdots\,u_{|K|}\,r$ is a string prefix.  Then by Condition~(ii), $u_1\,u_2\,\cdots\,u_{|K|}\,q\,r$ is also a prefix of some string.   So $K\cup\{q,r\}\in\LLL$.   
Hence, by Theorem~\ref{pro_learning_spaces}, $(Q,\LLL)$ is a learning space.  Moreover, the learning strings of $(Q,\LLL)$ constitute exactly $\SSS$ (this derives again from the definition of $\SSS$ together with Condition~(iii)). 
\end{proof}

Here is an example showing that Conditions~(ii) and (iii) in Theorem~\ref{theo_learning_string} are independent.

\begin{example}
On the domain $Q=\{a,b,c,d\}$, the two strings
\begin{align*}
& a\, b\, d\, c,\\
& a\, c\, d\, b
\end{align*}
form a collection which satisfies Condition~(iii) in Theorem~\ref{theo_learning_string} but not Condition~(ii) (take $k=2$, $u_1\,u_2=a\,b$ and $v_1\,v_2=a\,c$).
Conversely, the two strings on the same domain $Q=\{a,b,c,d\}$
\begin{align*}
& a\, b\, c\, d,\\
& b\, a\, d\, c
\end{align*}
form a collection which satisfies Condition~(ii) in Theorem~\ref{theo_learning_string} but not Condition~(iii) (take $k=2$,  $u_1\,u_2=b\,a$ and $v_1\,v_2\,v_3=a\,b\,c$).
\end{example}

The next result is Theorem~2.1 in \authindex{Boyd, E.A.} \authindex{Faigle, U.}\cite{Boyd_Faigle_1990} (compare with Theorem~1.4 in \authindex{Korte, B.}\authindex{Lov\'asz, L.}\authindex{Schrader, R.}\citealp[]{Korte_Lovasz_Schrader_1991}).

\begin{theorem}\label{theo_learning_word}  
\sl Let $Q$ be a finite domain.   
A collection $\WWW$ of words on $Q$ is the collection of all learning words of some learning space on $Q$ if and only if $\WWW$ satisfies the following three conditions:
\begin{enumerate}[\quad\rm(i)]
\item any item from $Q$ appears in at least one word of $\WWW$;
\item any prefix of a word in $\WWW$ also belongs to $\WWW$;
\item if $v$ and $w$ are two words of $\WWW$ with $\widetilde v \not\subseteq \widetilde w$, then for some item $q$ in $\widetilde v \setminus \widetilde w$ the concatenation $w\,q$ is a word again in $\WWW$.
\end{enumerate}
\end{theorem}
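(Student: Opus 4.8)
The plan is to prove both directions by connecting learning words to the states of a learning space, leaning on the characterization in Theorem~\ref{pro_learning_spaces} (downgradability plus the two-item closure condition~(c)). Throughout, I will use the bijection-like correspondence between a learning word $w=w_1w_2\cdots w_k$ and the chain of states $\es=\widetilde{w_1\cdots w_0}\subset \widetilde{w_1}\subset\cdots\subset\widetilde w$; the key point is that a word is a learning word exactly when each such partial set is a state, so learning words encode precisely the maximal chains of states ending anywhere.

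For \emph{necessity}, I assume $\WWW$ is the collection of all learning words of a learning space $(Q,\LLL)$. Condition~(i) follows from Learning Smoothness applied to $\es$ and $Q$: the resulting chain yields a learning string using every item, so each item appears in some word. Condition~(ii) is immediate from the definition, since every prefix of a learning word is itself a learning word (each of its prefixes is among the prefixes of the original word, hence a state). For Condition~(iii), given learning words $v,w$ with $\widetilde v\not\subseteq\widetilde w$, I would pick a state-theoretic witness: since $\widetilde w\subset \widetilde w\cup\widetilde v$ and $\widetilde v$ is reachable from $\es$ by one-item steps, I want to show some single item $q\in\widetilde v\setminus\widetilde w$ can be appended to $w$ keeping it a learning word, i.e.\ $\widetilde w\cup\{q\}\in\LLL$. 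This is where Learning Consistency enters: I would take a minimal state inside $\widetilde v$ not contained in $\widetilde w$, locate an item $q$ by which it covers a subset already inside $\widetilde w$, and use [L2] to lift the step $\widetilde w\to\widetilde w\cup\{q\}$. The cleanest route is to argue by induction on $|\widetilde v\setminus\widetilde w|$ via downgradability of $\widetilde v$, finding the first item added to $\widetilde v$ that lies outside $\widetilde w$.

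For \emph{sufficiency}, I assume $\WWW$ satisfies (i)--(iii) and define $\LLL=\{\widetilde w\st w\in\WWW\}$, the collection of sets determined by words in $\WWW$. By~(ii) the empty word is in $\WWW$ (it is a prefix of anything, and $\WWW$ is nonempty by~(i)), so $\es\in\LLL$; repeatedly applying~(iii) to extend a word until it uses all items shows $Q\in\LLL$. I will verify the three conditions of Theorem~\ref{pro_learning_spaces}: finiteness~(a) is given; downgradability~(b) holds because any $\widetilde w$ with $w=w_1\cdots w_k$ has $\widetilde{w_1\cdots w_{k-1}}=\widetilde w\setminus\{w_k\}\in\LLL$ by~(ii). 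For condition~(c), given $K,K\cup\{q\},K\cup\{r\}\in\LLL$, I take a word $w$ with $\widetilde w=K$ and apply~(iii) twice — once against a word determining $K\cup\{q\}$ and once against one determining $K\cup\{r\}$ — to conclude that $wq$ and then $wqr$ (or symmetrically) extend to words in $\WWW$, giving $K\cup\{q,r\}\in\LLL$. Finally I must check that the learning words of $(Q,\LLL)$ are \emph{exactly} $\WWW$, not a superset; the inclusion $\WWW\subseteq$ (learning words) is clear from~(ii), and the reverse uses that every learning word of $\LLL$ is built step by step through states each realized by a word in $\WWW$, reconstructed using~(iii).

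The main obstacle I anticipate is condition~(iii) in the necessity direction: extracting a single usable item $q\in\widetilde v\setminus\widetilde w$ rather than merely knowing $\widetilde w\cup\widetilde v\in\LLL$ requires a careful minimality argument combining downgradability of $\widetilde v$ with Learning Consistency, and I must ensure the item I produce genuinely lies in $\widetilde v\setminus\widetilde w$ and that $\widetilde w\cup\{q\}$ is a state so that $wq$ remains a learning word. A secondary subtlety is confirming in the sufficiency direction that $\LLL$ recovers \emph{precisely} $\WWW$ as its learning-word collection, which closes the ``if and only if'' cleanly.
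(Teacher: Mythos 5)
Your proposal is correct, and its overall decomposition coincides with the paper's: necessity of (i) and (ii) read off directly from the definition of learning words, and sufficiency by setting $\LLL=\{\widetilde w \st w\in\WWW\}$ and verifying the conditions of Theorem~\ref{pro_learning_spaces} (downgradability from (ii), the two-item closure condition~(c) from (iii)), followed by the check that $\WWW$ is exactly the collection of learning words of $\LLL$. The one genuinely different step is the necessity of (iii). The paper first notes $\widetilde w \cup \widetilde v \in \LLL$ because $\LLL$ is $\cup$-closed (invoking Theorem~\ref{big_equivalence}), then applies Learning Smoothness to the pair $(\widetilde w,\ \widetilde w \cup \widetilde v)$ and takes $q$ to be the first item added; automatically $q\in\widetilde v\setminus\widetilde w$ and $w\,q\in\WWW$. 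You instead stay entirely inside the axioms [L1]--[L2]: scan the prefix chain of the learning word $v$ and let $v_i$ be the first letter with $v_i\notin\widetilde w$; then $K=\{v_1,\dots,v_{i-1}\}$ and $K\cup\{v_i\}$ are states with $K\subseteq\widetilde w$, so Learning Consistency gives $\widetilde w\cup\{v_i\}\in\LLL$, i.e.\ $w\,v_i\in\WWW$. This works, and the induction you anticipate needing is unnecessary --- fixing the \emph{first} such letter settles it in one step. What each route buys: yours is self-contained with respect to the learning-space axioms and does not presuppose the equivalence with well-graded knowledge spaces, while the paper's is shorter given that Theorem~\ref{big_equivalence} is already established. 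Your sufficiency argument for (c) (two applications of (iii) starting from a word realizing $K$) is a harmless variant of the paper's single application between words realizing $K\cup\{q\}$ and $K\cup\{r\}$, and your sketch of the exactness check --- rebuilding any learning word of $\LLL$ prefix by prefix via (iii) --- correctly fills in what the paper dismisses as ``easily checked''.
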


\begin{proof}
(Necessity)  Assume $(Q,\LLL)$ is a learning space, and denote by $\WWW$ the collection of all its learning words.  Then by downgradability of $\LLL$ and $Q\in\LLL$, the collection $\WWW$ contains some string, so Condition~(i) is true.  By the definition of a learning word $w$, any prefix of $w$ is also a learning word, so Condition~(ii) holds.  Now take two words $v$ and $w$ as in Condition~(iii).  Then $\widetilde w \cup \widetilde v \in \LLL$ (by Theorem~\ref{big_equivalence}, $\LLL$ is $\cup$-closed). Because of Learning Smoothness, there is a sequence $L_0=\widetilde w$, $L_1$, \dots, $L_\ell=\widetilde w \cup \widetilde v$ with $|L_i \setminus L_{i-1}|=1$ for $i=1$, $2$, \dots, $\ell$.  Let $\{q\}=L_1 \setminus L_0$.  Then $q\in \widetilde v \setminus \widetilde w$ and $w \, q \in \WWW$.

\medskip

(Sufficiency)  Given a collection $\WWW$ of words as in the statement, set $\LLL=\{\widetilde w \st w\in\WWW\}$.  Then $\es\in\LLL$.  Repeatedly applying Conditions~(i) and (iii), we infer that there is a string in $\WWW$, and so $Q \in \LLL$.  By (ii), $\LLL$ is downgradable.  
To conclude that $(Q,\LLL)$ is a learning space it now suffices to prove that $\LLL$ satisfies  Condition~(iii) in Theorem~\ref{pro_learning_spaces}.  Let $K$, $K\cup\{q\}$ and $K\cup\{r\}$ be states in $\LLL$ with $q\neq r$.  
There are then some words $v$ and $w$ in $\WWW$ such that $\widetilde v=K\cup\{q\}$ and $\widetilde w=K\cup\{r\}$.  Because $\widetilde v \setminus \widetilde w=\{q\}$, Condition~(iii) implies $w\,q\in\WWW$.  Now $\widetilde{w\,q}=K\cup\{q,r\}$, and so $K\cup\{q,r\}\in\LLL$.  Finally, it is easily checked that $\WWW$ consists of all learning words of $\LLL$.
\end{proof}

Learning words and strings form a useful tool for the handling of large learning spaces.  For instance, they are implicit in the new representation in Figure~\ref{fig_another_example_bis} of the learning space $\LLL$ from Example~\ref{ex_another_example}: a learning string consists of the letters (representing items) on a path from the vertex representing $\es$ to the vertex representing $Q$.  We call such a representation (with letters displayed only to show addition of a single item) a \indexedterm{learning diagram}.

\begin{figure}[ht]
\begin{center}
\begin{tikzpicture}[yscale=1.5]
\node at (0,-0.3) {$\es$};
\node at (0,4.3) {$Q$};
\begin{scope}[every node/.style={circle,scale=0.3,draw}]
\node (es)  at (0,0)  {};
\node (a)   at (-1,1) {};
\node (b)   at ( 1,1) {};
\node (ab)  at (-1,2) {};
\node (bd)  at ( 1,2) {};
\node (abc) at (-2,3) {};
\node (abd) at ( 0,3) {};
\node (bcd) at ( 2,3) {};
\node (Q)   at ( 0,4) {};
\end{scope}
\begin{scope}[every node/.style={font=\small,midway}]
\draw (es) --node[anchor=base east]{\strut$a$} (a);
\draw (es) --node[anchor=base west]{\strut$b$} (b);
\draw (a)  --node[anchor=base east]{\strut$b$} (ab); 
\draw (b)  --node[anchor=base west]{\strut$d$} (bd);;
\draw (b)  --node[anchor=base west]{\strut$a$} (ab); 
\draw (ab) --node[anchor=base east]{\strut$c$} (abc);
\draw (ab) --node[anchor=base east]{\strut$d$} (abd);
\draw (bd) --node[anchor=base east]{\strut$a$} (abd);
\draw (bd) --node[anchor=base west]{\strut$c$} (bcd);
\draw (abc)--node[anchor=base east]{\strut$d$} (Q);
\draw (abd)--node[anchor=base west]{\strut$c$} (Q);
\draw (bcd)--node[anchor=base west]{\strut$a$} (Q);
\end{scope}
\end{tikzpicture}
\end{center}
\caption[The learning diagram of the learning space in Example~\ref{ex_another_example}]{The learning diagram representing the learning space in Example~\ref{ex_another_example} (see also Figure~\ref{fig_another_example}).}
\label{fig_another_example_bis}
\end{figure}
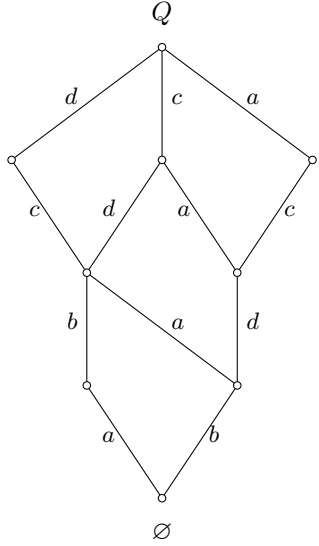

Figure~\ref{fig_ten_items_learning_diagram} on page~\pageref{fig_ten_items_learning_diagram} shows a similar learning diagram for our ten-item example from Example~\ref{ten_items_example}.  

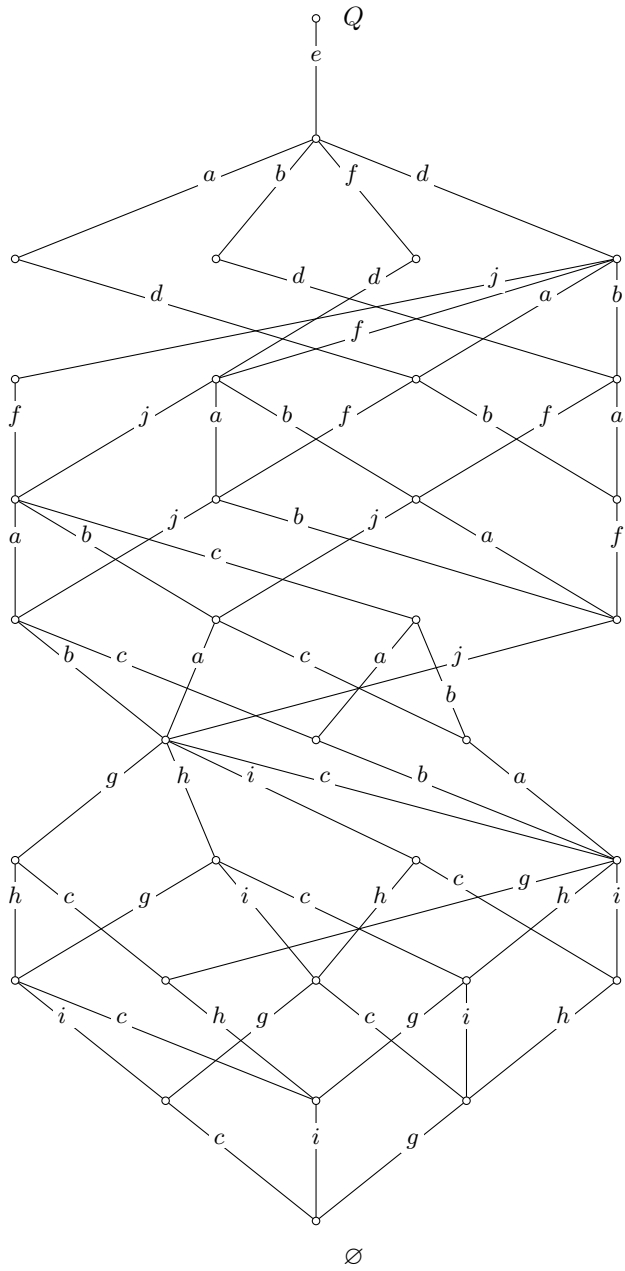
\begin{figure}[t!]
\begin{center}
\begin{tikzpicture}[yscale=1.6]

\node at (0.5,-0.3) {$\es$};
\node at (0.5,10) {$Q$};
\begin{scope}[every node/.style={circle,scale=0.3,draw}]
\node (es) at ( 0,0) {};
\node (c) at (-2,1) {};
\node (i) at ( 0,1) {};
\node (g) at ( 2,1) {};
\node (ci) at (-4,2) {};
\node (hi) at (-2,2){};
\node (cg) at ( 0,2) {};
\node (gi) at ( 2,2) {};
\node (gh) at ( 4,2) {};

\node (chi) at (-4,3) {};
\node (cgi) at (-1.33,3) {};
\node (cgh) at ( 1.33,3) {};
\node (ghi) at ( 4,3) {};
\node (cghi) at (-2,4) {}; 
\node (bghi) at ( 0,4) {};
\node (aghi) at ( 2,4) {}; 
\node (bcghi) at (-4,5) {};%
\node (acghi) at (-1.33,5) {};
\node (abghi) at ( 1.33,5) {};
\node (cghij) at ( 4,5) {};  
\node (abcghi) at (-4,6) {};
\node (bcghij) at (-1.33,6) {};%
\node (acghij) at ( 1.33,6) {};
\node (cfghij) at ( 4,6) {};
\node (abcfghi) at (-4,7) {};
\node (abcghij) at (-1.33,7) {};%
\node (bcfghij) at ( 1.33,7) {};
\node (acfghij) at ( 4,7) {};
\node (bcdfghij) at (-4,8) {};%
\node (acdfghij) at (-1.33,8) {};
\node (abcdghij) at ( 1.33,8) {};
\node (abcfghij) at ( 4,8) {};
\node (abcdfghij) at (0,9) {};
\node (Q) at (0,10) {};
\end{scope}

\begin{scope}[every node/.style={font=\small, pos=0.65, fill=white,
anchor=base,  inner sep=1.8pt}]
\draw (es) --node{$c$} (c);
\draw (es) --node{$i$} (i);
\draw (es) --node{$g$} (g);
\draw (c) --node{$i$\;\;} (ci);
\draw (c) --node{$g$} (cg);
\draw (i) --node{$h$} (hi);
\draw (i) --node{$g$} (gi);
\draw (g) --node{$c$} (cg);
\draw (i) --node{$c$} (ci);
\draw (g) --node{$i$} (gi);
\draw (g) --node{$h$} (gh);
\draw (ci) --node{$h$} (chi);
\draw (hi) --node{$c$} (chi);
\draw (cg) --node{$i$\;\;} (cgi);
\draw (gi) --node{$c$} (cgi);
\draw (gh) --node[pos=0.8]{$c$} (cgh);
\draw (ci) --node{$g$} (cgi);
\draw (hi) --node[pos=0.8]{$g$} (ghi);
\draw (cg) --node{$h$} (cgh);
\draw (gh) --node{$i$} (ghi);
\draw (gi) --node{$h$} (ghi);
\draw (chi) --node{$g$} (cghi);
\draw (cgi) --node{$h$} (cghi);
\draw (cgh) --node{$i$\;} (cghi);
\draw (ghi) --node{$c$} (cghi);
\draw (ghi) --node{$b$} (bghi);
\draw (ghi) --node{$a$} (aghi);
\draw (cghi) --node{$b$} (bcghi);
\draw (cghi) --node{$a$} (acghi);
\draw (cghi) --node{$j$} (cghij);
\draw (bghi) --node{$a$} (abghi);
\draw (aghi) --node{$c$} (acghi);
\draw (bghi) --node{$c$} (bcghi);
\draw (aghi) --node[pos=0.3]{$b$} (abghi);
\draw (cghij) --node{$a$} (acghij);
\draw (cghij) --node[pos=0.8]{$b$} (bcghij);
\draw (cghij) --node{$f$} (cfghij);
\draw (bcghi) --node[pos=0.8]{$j$} (bcghij);
\draw (bcghi) --node{$a$} (abcghi);
\draw (acghi) --node[pos=0.8]{$j$} (acghij);
\draw (acghi) --node{$b$} (abcghi);
\draw (abghi) --node[pos=0.5]{$c$} (abcghi);
\draw (abcghi) --node{$j$} (abcghij);
\draw (abcghi) --node{$f$} (abcfghi);
\draw (bcghij) --node{$f$} (bcfghij);
\draw (bcghij) --node{$a$} (abcghij);
\draw (acghij) --node{$f$} (acfghij);
\draw (acghij) --node{$b$} (abcghij);
\draw (cfghij) --node{$b$} (bcfghij);
\draw (cfghij) --node{$a$} (acfghij);
\draw (bcfghij) --node{$d$} (bcdfghij);
\draw (bcfghij) --node{$a$} (abcfghij);
\draw (acfghij) --node[pos=0.8]{$d$} (acdfghij);
\draw (acfghij) --node{$b$} (abcfghij);
\draw (abcghij) --node[pos=0.35]{$f$} (abcfghij);
\draw (abcghij) --node[pos=0.8]{$d$} (abcdghij);
\draw (abcfghi) --node[pos=0.8]{$j$} (abcfghij);
\draw (abcfghij) --node{$d$} (abcdfghij);
\draw (bcdfghij) --node{$a$} (abcdfghij);
\draw (acdfghij) --node{$b$} (abcdfghij);
\draw (abcdghij) --node{$f$} (abcdfghij);
\draw  (abcdfghij) --node{$e$} (Q);
\end{scope}

\end{tikzpicture}
\end{center}
\caption[The learning diagram of the ten-item example]{The learning diagram of the ten-item learning space $\LLL$ of Example~\ref{ten_items_example} (see also Figure~\ref{fringe_theorem}). 
}
\label{fig_ten_items_learning_diagram}
\end{figure}

Theorem~\ref{theo_learning_string} characterizes learning spaces through their complete collections of learning strings.  
In the same vein as the base which, containing only a relatively small number of states of the knowledge structure, gives us access to the whole collection, we might want to summarize in a similar way the collection of learning strings in a subcollection.  The following definition comes from  \authindex{Eppstein, D.}\citet{Eppstein_2013a}.

\begin{definition}\label{def_strings_to_learning_space}
Let $\SSS$ be a collection of strings on a finite domain $Q$.
Form the collection $\LLL_\SSS$ containing all possible unions of the sets determined by prefixes of strings in $\SSS$.  Then  $(Q,\LLL_\SSS)$ is the learning space \subjindex{encoded (learning space)}\term{encoded} by $\SSS$.
\EDE\end{definition}

That $(Q,\LLL_\SSS)$ indeed forms a learning space is easy to verify (for instance, apply Theorem~\ref{big_equivalence}(ii)).  Now, conversely, given a learning space $(Q,\LLL)$, we may select any nonempty subset $\SSS$ of its collection of learning strings; then, in general, $\LLL_\SSS \subseteq \LLL$ holds, but there is no reason to have equality here (see Theorem~13.5.7 in Eppstein, 2013a, for a criterion).

The case in which we have the equality $\LLL_\SSS = \LLL$ is interesting for algorithmic work on the learning space $\LLL$.  Let us denote as $S_1$, $S_2$, \dots, $S_k$ the strings forming $\SSS$.  Then any state in $\LLL$ is univocally encoded by a list of natural numbers $n_1$, $n_2$, \dots, $n_k$: each of the numbers specifies the length of the prefix we need to extract from the corresponding string in $\SSS$ to get the state at hand as the union of the prefixes. \authindex{Eppstein, D.}\cite{Eppstein_2013a} shows how to exploit the new state encoding for various tasks.  The present context generates the following problem: given a learning space, how do we compute the smallest number of learning strings needed to encode it?  The ensuing invariant was dubbed \indexedterm{convex dimension} by \authindex{Edelman, P.H.}\authindex{Jamison, R.E.}\cite{Edelman_Jamison_1985} 
\authindex{Korte, B.}\authindex{Lov\'asz, L.}\authindex{Schrader, R.}\cite[see also][]{Korte_Lovasz_Schrader_1991}.  
\authindex{Eppstein, D.}\cite{Eppstein_2013b} gives an algorithm to compute it.

\begin{example}
The learning space of Example~\ref{ex_another_example} (see also Figure~\ref{fig_another_example}) is encoded by the following three of its learning strings:
\begin{align*}
& a\, b\, d\, c,\\
& b\, a\, c\, d,\\
& b\, d\, c\, a.
\end{align*}
It is as well encoded by the two strings
\begin{align*}
& a\, b\, c\, d,\\
& b\, d\, c\, a,
\end{align*}
but never by just one string.
\EEX\end{example}

\section[The Projection Theorem]{The Projection Theorem
\sectionmark{The Projection Theorem}}
\sectionmark{The Projection Theorem}
\label{projection_theorem}
How large is the structure of a real-life learning space? For instance, what is the ratio of the number of knowledge states to the number of possible subsets of the domain?
In the ten-item example of Table~\ref{ten_items} and Figure~\ref{ten_items_figure} we have $34$ knowledge states, which gives the ratio $\frac {34}{2^{10}}\approx .03$. However, this example may be misleading.  In real-life learning spaces, the ratio may become considerably smaller as soon as there are a couple of dozen items.   
As another illustration, we again take the 37 items example in Beginning Algebra.
There are $4\,615$ knowledge states in the corresponding (induced) learning space.  With just $37$ items, the ratio of the number of states to the number of subsets is $\frac {4\,615}{2^{37}}\approx .03\times 10^{-6}$.  We mention in passing that there are $217$ different knowledge states containing exactly $25$ items. Presumably all these knowledge states would be assigned the same psychometric score in classical test theory, while KST treats them as different.
 
As mentioned in Section~\ref{origin}, the full domain of Beginning Algebra in the \subjindex{\aleks}\aleks{} system contains about $650$ items.  The complexity of the resulting learning space is daunting.  
It calls for ways of parsing such huge learning spaces into meaningful components.  One of the goals could be a placement test for which only part of the full collection of items is needed.  A more important reason arises when we need an assessment on the full structure.  In such a case, the number of knowledge states is so large that a straightforward approach becomes infeasible. 

A practical solution has been worked out which consists in suitably partitioning the domain and then carrying on simultaneous, parallel, mutually informative assessments of the resulting substructures, ultimately followed by the computation of the final state.  
We outline this technique in Section~\ref{assessment}.  Here, we define two useful concepts, that of a `projection' and that of `children' of a learning space, given a subset of the domain.  
We show---without proof---that such a projection remains a learning space, while in general children satisfy only some of the requirements in Definition~\ref{def_ls_axioms}.

We begin with an illustration based on a small learning space.

\begin{example}\label{ex_still_another_example}
Let $(Q,\KKK)$ be the learning space on the domain $\{a,b,c,d\}$ whose covering diagram is provided in Figure~\ref{fig_still_another_example}.
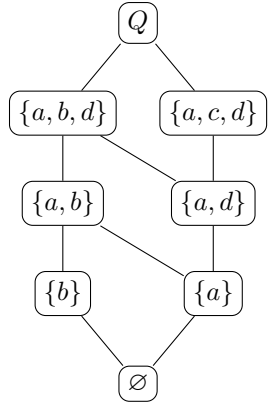
\begin{figure}[ht]
\begin{center}
\begin{tikzpicture}[yscale=1.2,every node/.style={rectangle, draw, rounded corners}]
\node (es)  at (0,0) {$\es$};
\node (a)   at ( 1,1) {$\{a\}$};
\node (b)   at (-1,1) {$\{b\}$};
\node (ab)  at (-1,2) {$\{a,b\}$};
\node (ad)  at ( 1,2) {$\{a,d\}$};
\node (abd) at ( -1,3) {$\{a,b,d\}$};
\node (acd) at ( 1,3) {$\{a,c,d\}$};
\node (Q)   at ( 0,4) {$Q$};
\draw (es) -- (a) -- (ad) -- (acd) -- (Q);
\draw (es) -- (b) -- (ab) -- (abd) -- (Q);
\draw (a) -- (ab);
\draw (ad) -- (abd);
\end{tikzpicture}
\end{center}
\caption[The covering diagram of the learning space in Example~\ref{ex_still_another_example}]{The covering diagram of the learning space in Example~\ref{ex_still_another_example}.}
\label{fig_still_another_example}
\end{figure}
Consider the subset\linebreak
 $Q'=\{c,d\}$ of the domain $Q$ and form all the `traces' $K \cap Q'$, for $K$ any state in $\KKK$.  The resulting collection, the `projection' of $\KKK$ on $Q'$,
\begin{equation}
\{\, \es,\, \{d\},\, \{c,d\} \,\},
\end{equation}
forms again a learning space.  The general result appears in Theorem~\ref{projection_theo}(i) below.  We summarize the construction in Figure~\ref{fig_patates}. 

\begin{figure}[h!]
\begin{center}
\begin{tikzpicture}[scale=0.9]
\begin{scope}[every node/.style={rectangle, draw, rounded corners}]
\node (es)  at (0,0) {$\es$};
\node (a)   at ( 2,2) {$\{a\}$};
\node (b)   at (-2,2) {$\{b\}$};
\node (ab)  at (-2,4) {$\{a,b\}$};
\node (ad)  at ( 2,4) {$\{a,d\}$};
\node (abd) at ( -2,6) {$\{a,b,d\}$};
\node (acd) at ( 2,6) {$\{a,c,d\}$};
\node (Q)   at ( 0,8) {$Q$};
\draw (es) -- (a) -- (ab);
\draw (es) -- (b) -- (ab);
\draw (ad) -- (abd);
\draw (a) -- (ad) -- (acd);
\draw (ab) -- (abd) -- (Q);
\draw (acd) -- (Q);
\end{scope}
\begin{scope}[xshift=45,yshift=-37,rotate=63]
\draw[rounded corners,dashed,thick] (0,0) rectangle (3.7,7);
\end{scope} 
\begin{scope}[xshift=75,yshift=81,rotate=63]
\draw[rounded corners,dashed,thick] (0,0) rectangle (1.4,7);
\end{scope}
\begin{scope}[xshift=80,yshift=136,rotate=63]
\draw[rounded corners,dashed,thick] (0,0) rectangle (2.2,7);
\end{scope} 
\node[draw,thick] (t_es) at (5,-0.75) {$\es$};
\node[draw,thick] (t_d) at (5,2.5) {$\{d\}$};
\node[draw,thick] (t_cd) at (5,4.9) {$\{c,d\}$};
\draw[thick] (2.45,0.45) -- (t_es);
\draw[thick] (2.95,3.5) -- (t_d);
\draw[thick] (3.3,5.75) -- (t_cd);
\end{tikzpicture}
\end{center}
\caption[Two constructions (projection and children)]{An illustration of the two constructions in Example~\ref{ex_still_another_example}
(with $Q'=\{c,d\}$): 
the three equivalence classes are in the rounded, dashed rectangles; the traces are displayed on the right.}
\label{fig_patates}
\end{figure}
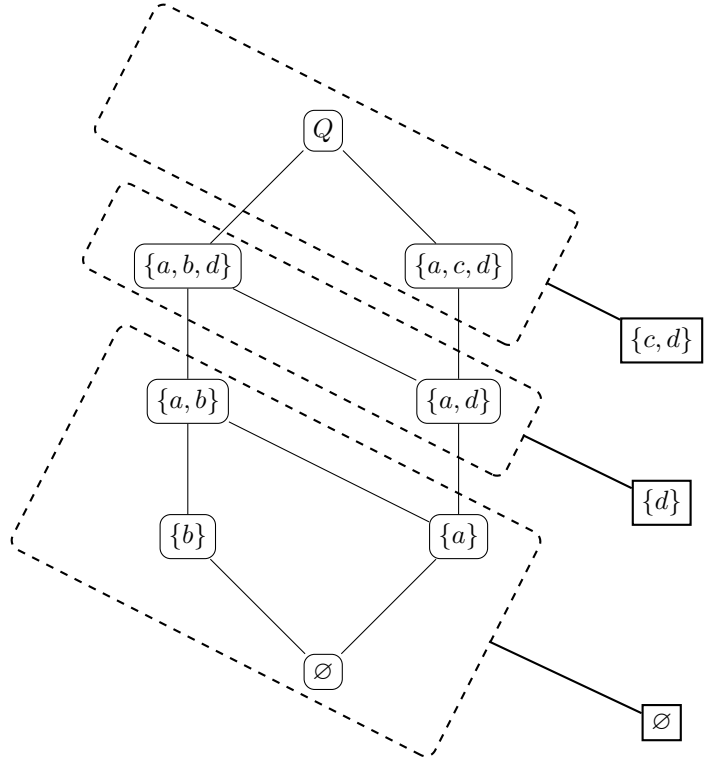

With the same space we now illustrate another construction.  This time, we sort out the states in $\KKK$ according to their intersections with $Q'=\{c,d\}$.  The resulting equivalence classes are displayed row by row in Table~\ref{table_sorting}.  
Thus the states of $\KKK$ in a same row all have the same trace on $Q'=\{c,d\}$ (shown in the second column).  It happens that they always form a `union-stable', well-graded knowledge structure (see Theorem~\ref{projection_theo}(ii) below).  However, in view of the absence of $\es$, the two last structures do not constitute a learning space.  In the third column, we show the `children'; they are obtained by subtracting from the states (of $\KKK$ shown in that row) their common intersection.
\begin{table}[h!]  
\caption[Traces and children]{The states of the learning space $(Q,\KKK)$ in Example~\ref{ex_still_another_example} are sorted according to their intersection with $\{c,d\}$.  The third column provides the corresponding children.}
\label{table_sorting}
\begin{equation*}
\renewcommand{\arraystretch}{1.5}
\renewcommand{\tabcolsep}{1mm}
\begin{array}{c@{\quad}c@{\quad}c}
\hline
\text{Classes of states} & \text{Intersections with }\{c,d\} & \text{Children}\\
\hline
\{\{a,c,d\},\, Q \}& \{c,d\}  & \{\es,\, \{b\}\}\\
\{\{a,d\},\, \{a,b,d\}\} & \{d\} & \{\es,\, \{b\}\}\\
\{\es,\, \{a\},\, \{b\},\, \{a,b\}\} & \es & \{\es,\, \{a\},\, \{b\},\, \{a,b\}\}\\
\hline
\end{array}
\end{equation*}
\end{table}
\EEX\end{example}

In the second row of Table~\ref{table_sorting}, all the states (of the original learning space) have in common the items $d$ (by construction) and moreover $a$.  Removing the two common items gives the two sets $\es$ and  $\{b\}$ which altogether form a learning space on the new domain $\{b\}$.  The last assertion is not true in general.

\begin{example}\label{ex_counter_example_child}
Let $(Q,\KKK)$ be the learning space on the domain $\{a,b,c,d\}$ whose covering diagram is provided in Figure~\ref{fig_counter_example_child}.
\begin{figure}[ht]
\begin{center}
\begin{tikzpicture}[yscale=1.2,every node/.style={rectangle, draw, rounded corners}]
\node (es)   at ( 0,0) {$\es$};
\node (b)    at ( 0,1) {$\{b\}$};
\node (bc)   at (-2,2) {$\{b,c\}$};
\node (bd)   at ( 2,2) {$\{b,d\}$};
\node (abc)  at (-2,3) {$\{a,b,c\}$};
\node (bcd)  at ( 0,3) {$\{b,c,d\}$};
\node (abd)  at ( 2,3) {$\{a,b,d\}$};
\node (Q)    at ( 0,4) {$Q$};
\draw (es) -- (b) -- (bc) -- (abc) -- (Q);
\draw (bc) -- (bcd) -- (Q);
\draw (b) -- (bd) -- (bcd);
\draw (bd) -- (abd) -- (Q);
\end{tikzpicture}
\end{center}
\caption[The covering diagram of the learning space in Example~\ref{ex_counter_example_child}]{The covering diagram of the learning space in Example~\ref{ex_counter_example_child}.}
\label{fig_counter_example_child}
\end{figure}
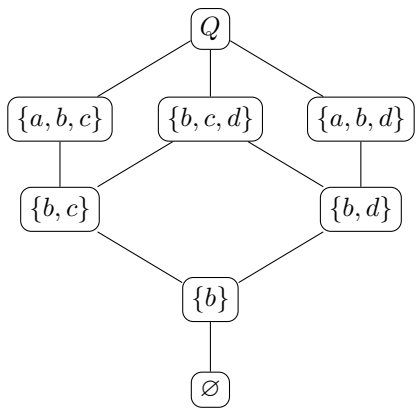
For $Q'=\{a\}$, one of the two children equals $\{\{c\}$, $\{d\}$, $\{c,d\}\}$; it does not contain the empty set.
\end{example}

We now define the concept of a projection, and then that of children.

\begin{definition} \label{def_projection} 
Suppose that $(Q,\KKK)$ is a knowledge structure with $|Q|\geq 2$,  and let $Q'$ be any proper nonempty subset of $Q$.
For $K$ in $\KKK$, the subset $K \cap Q'$ of $Q'$ is the \indexedterm{trace} of $K$ on $Q'$.  The collection of all traces
\begin{equation}\label{eq KQ'}
\KKK_{|Q'}= \{K\cap Q' \;\st\; K\in\KKK\}
\end{equation}
is the \subjindex{projection (of a knowledge structure)}\term{projection} of $\KKK$ on $Q'$. 
\EDE\end{definition}

Figure~\ref{fig_children_10_items} shows the trace of the ten-item learning space on $\{c,d,g,j\}$.
Note that the sets in $\KKK_{|Q'}$ may be or not be states of $\KKK$.

\begin{figure}[t!]
\begin{center}
\begin{tikzpicture}[xscale=1,yscale=1.5]
\begin{scope}[every node/.style={rectangle, draw, rounded corners}]
\node (es)  at ( 0,0) {$\es$};
\node (c) at (-2,1) {$\{c\}$};
\node (i) at ( 0,1) {$\{i\}$};
\node (g) at ( 2,1) {$\{g\}$};
\node (ci) at (-4,2) {$\{c,i\}$};
\node (gi) at ( 2,2) {$\{g,i\}$};
\node (hi) at (-2,2) {$\{h,i\}$};
\node (gh) at ( 4,2) {$\{g,h\}$};
\node (cg) at ( 0,2) {$\{c,g\}$};
\node (chi) at (-4,3) {$\{c,h,i\}$};
\node (cgi) at (-1.33,3) {$\{c,g,i\}$};
\node (cgh) at ( 1.33,3) {$\{c,g,h\}$};
\node (ghi) at ( 4,3) {$\{g,h,i\}$};
\node (cghi) at (-3.4,4) {$\{c,g,h,i\}$}; 
\node (bghi) at ( 0,4) {$\{b,g,h,i\}$};
\node (aghi) at ( 3.4,4) {$\{a,g,h,i\}$};  
\node (bcghi) at (-4,5) {$\{b,c,g,h, i\}$};
\node (acghi) at (-1.33,5) {$\{a,c,g,h,i\}$};
\node (abghi) at ( 1.33,5) {$\{a,b,g,h,i\}$};
\node (cghij) at ( 4,5) {$\{c,g,h, i,j\}$};
\node (cfghij) at (4,6) {$\left\{\begin{smallmatrix}\strut c,\, f,\, g,\\ h,\, i,\, j\end{smallmatrix}\right\}$};
\node (bcghij) at (-1.3,6) {$\left\{\begin{smallmatrix}\strut b,\,c,\,g,\\ h,\, i,\,j\end{smallmatrix}\right\}$};%
\node (acghij) at ( 1.3,6) {$\left\{\begin{smallmatrix}\strut a,\,c,\,g,\\ h,\,i,\,j\end{smallmatrix}\right\}$};
\node (abcghi) at (-4,6) {$\left\{\begin{smallmatrix}\strut a,b,c,\\ g,h,i\end{smallmatrix}\right\}$};
\node (abcfghi) at (-4,7) {$\left\{\begin{smallmatrix}\strut a,\,b,\,c,\,f\\ g,\,h,\,i\end{smallmatrix}\right\}$};

\node (abcghij) at (-1.3,7) {$\left\{\begin{smallmatrix}\strut a,\,b,\,c,\,g,\\ h,\, i,\,j\end{smallmatrix}\right\}$};
\node (bcfghij) at (1.3,7) {$\left\{\begin{smallmatrix}\strut b,\,c,\, f,\,g,\\ h,\, i,\,j\end{smallmatrix}\right\}$};%
\node (acfghij) at ( 4,7) {$\left\{\begin{smallmatrix}\strut a,\,c,\,f,\,g,\\ h,\,i,\,j\end{smallmatrix}\right\}$};
\node (abcfghij) at ( 4,8) {$\left\{\begin{smallmatrix}\strut a,\,b,\,c,\,f\\ g,\, h, \,i,\,j\end{smallmatrix}\right\}$};
\node (bcdfghij) at (-4,8) {$\left\{\begin{smallmatrix}\strut b,\,c,\,d,\,f\\ g,\,h,\, i,\,j\end{smallmatrix}\right\}$};%
\node (acdfghij) at ( -1.3,8) {$\left\{\begin{smallmatrix}\strut a,\,c,\,d,\,f\\ g,\, h,\,i,\,j\end{smallmatrix}\right\}$};
\node (abcdghij) at ( 1.3,8) {$\left\{\begin{smallmatrix}\strut a,\,b,\,c,\,d\\ g,\,h,\,i,\,j\end{smallmatrix}\right\}$};
\node (abcdfghij) at (0,9) {$\{a,b,c,d,f,g,h,i,j\}$};
\node (Q) at (0,10) {$Q$};
\end{scope}

\draw [rounded corners,dashed,thick] 
(-5.15,7.6) -- (2.4,7.6) -- (2.4,8.3) -- (1.9,9.2) -- (0.3,10.25) -- (-0.3,10.25) -- (-5.15,8.3) -- cycle;
\draw[thick] (-3,9.16) -- (-3.3,9.5) node[above left,draw]{$\{c,d,g,j\}$};

\draw[rounded corners,dashed,thick]   (5.2,8.35) -- (2.9,8.35) -- (2.9,7.4) -- (-2.4,7.4) -- (-2.3,5.6) -- (2.8,5.6) -- (2.8,4.7) -- (5.2,4.7) -- cycle;
\draw[thick] (4,8.35) -- (4.3,8.65) node[above right,draw]{$c,g,j$};

\draw [rounded corners,dashed,thick] (-0.7,1.7) -- (0.7,1.7) -- (1,2.6) -- (2.2,2.8) -- (2.2,3.2) -- (-0.5,3.6) -- (-2.3,3.8) -- (-2.3,4.2) -- (-0.1,4.8) -- (-0.1,5.3) -- (-1.7,5.3) -- (-2.9,5.7) -- (-2.9,7.4) -- (-5.2,7.4) -- (-5.2,4.8) -- (-4.4,3.8) -- (-2.2,3.2) -- (-2.2,2.8) -- (-0.7,2.3) -- cycle;
\draw[thick] (-4.7,4.19) -- (-5,4) node[below left ,draw]{$\{c,g\}$};

\draw [rounded corners,dashed,thick] (1.2,0.7) -- (2.4,0.7) -- (4.9,1.8) -- (4.9,3.2) -- (4.4,4.3) -- (2.6,4.7) -- (2.6,5.25) -- (0.1,5.25) -- (0.1,4.75) -- (-1.1,4.2) -- (-1.1,3.8) -- (3,3.2) -- (3,2.8) -- (1.2,2.2) -- (1.2,1.8) -- cycle;
\draw[thick] (3.5,1.19) -- (4,0.7) node[below right,draw]{$\{g\}$};

\draw [rounded corners,dashed,thick] (-2.5,0.7) -- (-1.4,0.7) -- (-1.4,1.2) -- (-3.1,1.7) -- (-3.1,3.3) -- (-4.9,3.3) -- (-4.9,1.7) -- cycle;
\draw[thick] (-3.3,1.025) -- (-3.8,0.55) node[below left,draw]{$\{c\}$};

\draw [rounded corners,dashed,thick] (-0.5,-0.25) -- (0.5,-0.25) -- (0.5,1.3) -- (-0.1,1.3) -- (-1.3,1.8) -- (-1.3,2.3) -- (-2.7,2.3) -- (-2.7,1.8) -- (-0.5,1) -- cycle;
\draw[thick] (0.47,-0.2) -- (1,-0.5) node[below right,draw]{$\es$};

\end{tikzpicture}
\end{center}
\caption[The children of the ten-item example (projection on $\{c,d,g,j\}$)]{Projection of the ten-item example on $\{c,d,g,j\}$: the dashed lines delineate the equivalence classes, the little black rectangles show the traces.}
\label{fig_children_10_items}
\end{figure}

\begin{definition} \label{def_children} 
Suppose again that $(Q,\KKK)$ is a knowledge structure with $|Q|\geq 2$, and let $Q'$ be any proper nonempty subset of $Q$.
Define the relation $\sim_{Q'}$ on $\KKK$ by
\begin{align}\label{def_sim_eq}
K \sim_{Q'} L \quad&\EQ\quad K\cap Q' =  L \cap Q'\\
\label{def_sim_eq_2}
&\EQ\quad K\bigtriangleup L \subseteq Q\setminus Q'.
\end{align}
(The equivalence between the right hand sides of (\ref{def_sim_eq}) and (\ref{def_sim_eq_2}) is easily verified.)
Then $\sim_{Q'}$ is an equivalence relation on $\KKK$.  When the context specifies the subset $Q'$,  we may simply write $\sim$ for $\sim_{Q'}$.
We denote by $[K]$ the equivalence class of $\sim$ containing $K$, and by $\KKK_\sim=\{\,[K]\,\st\, K\in\KKK\}$ the partition of $\KKK$ induced by $\sim$ (or by $Q'$).  
For any state $K$ in $\KKK$, we define  the collection
\begin{equation}
\label{def Kk}
\KKK_{[K]}= \{\,L\setminus \cap [K]\, \;\st\; L \in [K]\}.
\end{equation}
The collection $\KKK_{[K]}$ is  a $Q'$-\term{child} of $\KKK$, or simply a \indexedterm{child} of $\KKK$ when the set $Q'$ is made clear by the context.  A child of $\KKK$ may take the form of the singleton $\{\es\}$; it is then the \subjindex{trivial (child)}\term{trivial} child.  We refer to $\KKK$ as the \subjindex{parent (structure)}\term{parent} structure.
\EDE\end{definition}

Because $\es\in\KKK$ we have $\KKK_{[\es]}=[\es]$.  We may have $\KKK_{[K]} = \KKK_{[L]}$ even when $K\not\sim L$. (Examples are easily built: see Table~\ref{table_sorting}.)

\begin{theorem} \label{projection_theo} 
\sl Let $(Q,\KKK)$ be a learning space, with $|Q|\geq 2$. The following two properties hold for any proper nonempty subset $Q'$ of $Q$.
\begin{enumerate}[\quad\rm(i)]
\item The projection $\KKK_{|Q'}$ of $\KKK$ on $Q'$ is a learning space.
\item The children of $\KKK$ are well-graded and $\cup$-stable collections.  The latter means that the union of any nonempty subcollection of the collection also belongs to the 
collection\footnote{Notice a slight difference between ``union-stable'' and ``union-closed''; only the second condition entails that the empty set belongs to the collection.}. Example~\ref{ex_still_another_example} shows that the children are not necessarily learning spaces.
\end{enumerate}
\end{theorem}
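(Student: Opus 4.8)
The plan is to prove both parts by reducing to characterizations already in hand, chiefly the antimatroid characterization of learning spaces (Theorem~\ref{big_equivalence}), together with the facts that every learning space is $\cup$-closed and well-graded and that taking traces commutes with unions, i.e. $(\bigcup_\alpha K_\alpha)\cap Q'=\bigcup_\alpha(K_\alpha\cap Q')$.

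For part~(i), I would first record that $\KKK_{|Q'}$ is a knowledge structure on the finite set $Q'$, since it contains $\es\cap Q'=\es$ and $Q\cap Q'=Q'$. The trace-commutes-with-union identity, applied to the $\cup$-closure of $\KKK$, immediately shows that $\KKK_{|Q'}$ is $\cup$-closed. It then remains to check downgradability. Given a nonempty trace $T=K\cap Q'$, I would apply Learning Smoothness to $\es\subset K$ to get a chain $\es=K_0\subset K_1\subset\cdots\subset K_p=K$ with one-item increments, and track the traces $T_i=K_i\cap Q'$. Each increment adds a single item, so $T_i$ either equals $T_{i-1}$ or exceeds it by one item of $Q'$; since $T_p=T\neq\es$, taking the last index $j$ at which the trace grows yields $T_{j-1}=T\setminus\{q\}\in\KKK_{|Q'}$ for some $q\in T$. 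A finite, $\cup$-closed, downgradable knowledge structure is an antimatroid, hence a learning space by Theorem~\ref{big_equivalence}.

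For part~(ii), fix a state $K$, write $I=\cap[K]$ and $T=K\cap Q'$, and note that every $L\in[K]$ satisfies $L\cap Q'=T$ and $I\subseteq L$; one checks $I\cap Q'=T$, so $L\setminus I\subseteq Q\setminus Q'$ for each $L\in[K]$. The $\cup$-stability of $\KKK_{[K]}=\{L\setminus I\st L\in[K]\}$ follows again from the trace identity: for members $L_\alpha\setminus I$ the union is $(\bigcup_\alpha L_\alpha)\setminus I$, where $\bigcup_\alpha L_\alpha\in\KKK$ (by $\cup$-closure) and has trace $T$, hence lies in $[K]$, so the union is a member of $\KKK_{[K]}$.

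The substance of part~(ii) is well-gradedness, and this is where I expect the main obstacle, since a $\cup$-stable collection need not be well-graded in general (e.g. $\{\{a\},\{a,b,c\}\}$). The key observation is that tight chains can be kept inside a single class: given $L,L'\in[K]$, the union $L\cup L'$ lies in $\KKK$ and in $[K]$, and a Learning-Smoothness chain from $L$ up to $L\cup L'$ adds only items of $L'\setminus L\subseteq Q\setminus Q'$, so every intermediate state retains trace $T$ and stays in $[K]$; symmetrically for the chain from $L'$ up to $L\cup L'$. Concatenating the first chain with the reverse of the second produces a tight sequence from $L$ to $L'$ of length $|L\bigtriangleup L'|$ lying entirely in $[K]$. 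Finally I would subtract $I$ from every state of this sequence: since $I$ is contained in every member of $[K]$, one has $(N\setminus I)\bigtriangleup(N'\setminus I)=N\bigtriangleup N'$ for $N,N'\in[K]$, so unit increments and the total length are preserved, and the map $N\mapsto N\setminus I$ is injective on $[K]$; this yields the required tight sequence in $\KKK_{[K]}$ from $L\setminus I$ to $L'\setminus I$. The fact that children need not be learning spaces is already witnessed by Example~\ref{ex_still_another_example}, so nothing further is needed there.
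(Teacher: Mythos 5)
Your proof is correct, and every step checks out; note that there is no in-paper argument to compare it against, since the chapter states Theorem~\ref{projection_theo} explicitly without proof and defers to \citet{Falmagne_Doignon_LS}. Your proposal therefore genuinely fills the gap, and it does so using only tools the chapter makes available. For part~(i), the reduction to the antimatroid characterization of Theorem~\ref{big_equivalence} is clean: the identity $(\bigcup_\alpha K_\alpha)\cap Q'=\bigcup_\alpha(K_\alpha\cap Q')$ gives $\cup$-closure of $\KKK_{|Q'}$, and tracking traces along an [L1]-chain from $\es$ to $K$ and taking the last index at which the trace grows gives accessibility; this is the same style of argument the chapter uses in its combined proof of Theorems~\ref{big_equivalence} and~\ref{pro_learning_spaces}. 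For part~(ii), you correctly isolate the two facts that make the argument work: first, a tight chain from $L$ up to $L\cup L'$ adds only items of $L'\setminus L\subseteq Q\setminus Q'$, so every intermediate state keeps the trace $K\cap Q'$ and stays in the class $[K]$, whence concatenation through the union yields a tight path of length $|L\bigtriangleup L'|$ inside $[K]$; second, since $I=\cap[K]$ is contained in every member of $[K]$, the map $N\mapsto N\setminus I$ preserves symmetric differences, so that path descends to a tight path in the child $\KKK_{[K]}$. Your auxiliary verifications ($I\cap Q'=K\cap Q'$, hence members of the child avoid $Q'$; union-stability via $\bigcup_\alpha(L_\alpha\setminus I)=(\bigcup_\alpha L_\alpha)\setminus I$ with the union staying in $[K]$) are also right, and you correctly prove only stability under nonempty unions, which is all that can hold since Example~\ref{ex_counter_example_child} shows a child need not contain~$\es$.
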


We can impose some restricting conditions on the set $Q'$ that guarantee any child to be a learning space, provided that the empty state is added to the collection if necessary \authindex{Doignon, J.-P.}\authindex{Falmagne, J.-Cl.}\citep[see][Definition 2.4.11 and Theorem 2.4.12]{Falmagne_Doignon_LS}.    

The concept of a projection plays an essential role in designing assessment algorithms for realistic learning spaces. In applications, the size of the collection of states may be so prohibitively large that the obvious strategy of gradually narrowing down, by some method or other, the class of states consistent with the assessment results is not practical.  The solution discussed in Subsection~\ref{Modified Response Rule} is to first design a suitable partition of the domain into $N$ manageable classes.  Second, one builds the $N$ projections on these classes.  The assessment procedure then operates in parallel on the $N$ projections.  A combination of the results of the $N$ simultaneous assessments delivers a final knowledge state.    

\section[Probabilistic Knowledge Structures]{Probabilistic Knowledge Structures
\sectionmark{Probabilistic Knowledge Structures}}
\sectionmark{Probabilistic Knowledge Structures}
\label{Probabilistic Knowledge Structures}
The concept of a learning space is deterministic. As such, it does not provide realistic predictions of subjects' responses to the problems of a test.  Probabilities must enter in at least two ways in
a realistic model. For one, the knowledge states will certainly occur with different frequencies in the population of reference. So, it makes sense to postulate the existence of a probability distribution on the collection of states. For another, a subject's knowledge state does not necessarily specify the observed responses. A subject having mastered an item may be careless in responding, and make an error. Also, in some situations, a subject may be able to guess the correct response to a question not yet mastered\footnote{Such lucky guesses have probability zero or are very unlikely in assessment systems requiring open responses to the items, instead of multiple-choice. \subjindex{\aleks}\aleks{} is one of those systems.}. In general, it makes sense to introduce conditional probabilities of responses, given the states.

\begin{definition}\label{def probabilistic ks}
A \indexedterm{probabilistic (knowledge structure)} $(Q,\KKK,p)$ consists of a finite knowledge structure $(Q,\KKK)$ with a probability distribution $p$ on the collection $\KKK$ of states.  Thus $p(K)$ is a real number with $0\leq p(K) \leq 1$, for any state $K$, and moreover $\sum_{K\in\KKK}p(K)=1$.
A \indexedterm{parametrized (probabilistic knowledge structure)} $(Q,\KKK,p, \beta, \eta)$ is a probabilistic knowledge structure $(Q,\KKK,p)$ equipped with two functions $\beta: Q \to [0,1]:q\mapsto \beta_q$ and $\eta:Q\to [0,1]:q\mapsto \eta_q$. The number $\beta_q$ represents the \indexedterm{careless error probability} to item $q$, and the number $\eta_q$ is the \indexedterm{lucky guess probability} for item $q$. 
\EDE\end{definition}

\begin{definition}\label{def straight ppks}
A parametrized probabilistic knowledge structure $(Q,\KKK$, $p, \beta, \eta)$ is \subjindex{straight (parametrized probabilistic knowledge structure)}\term{straight} if $\beta_q=\eta_q=0$ for all items $q$.  
\EDE\end{definition}

We now describe two types of construction of probability distributions on a set of states. In the first type, the set of states results from a projection.

\begin{definition}\label{def_prob_projection}
Let $(Q,\KKK,p)$ be a probabilistic knowledge structure, and let $\es \neq Q'\subset Q$.  On the projection $\KKK_{|Q'}$, we define the \indexedterm{projected distribution}~$p'$ by setting, for $K'$ a state in $\KKK_{|Q'}$,
$$
p'(K') \;=\; 
\sum\left\{ p(K) \st K\in\KKK \text{ and }K \cap Q'=K' \right\}.
$$ 
Then $\left( Q',\KKK_{Q'},p' \right)$ is the \subjindex{probabilistic projection (of a probabilistic knowledge structure)}\term{probabilistic projection} on $Q'$ of the probabilistic knowledge structure $(Q,\KKK,p)$. 
\end{definition}

In the second case, we start with a probabilistic knowledge structure and extend it on a larger set of states.

\begin{definition}\label{def_prob_extension}
Let $(Q,\KKK)$ be a knowledge structure, and let $\es \neq Q'\subset Q$.  Assume $p'$ is a probability distribution on the projection $\KKK_{|Q'}$.  We define the \indexedterm{extended distribution} $p^+$ to $\KKK$ of $p$ by setting, for $K$ a state in $\KKK$,
$$
p^+(K) \;=\; 
\dfrac
{p'(K \cap Q')}
{\sum\left|\{ L\in\KKK \st L \cap Q' = K \cap Q'\} \right|}.
$$ 
Then $\left(Q,\KKK,p^+ \right)$ is the \subjindex{uniform 
extension (of a probabilistic knowledge structure)}\term{uniform extension} to $(Q,\KKK)$ of the probabilistic knowledge structure $(Q',\KKK',p')$. 
\end{definition}

\section[The Stochastic Assessment Algorithm]{The Stochastic Assessment Algorithm
\sectionmark{The Stochastic Assessment Algorithm}}
\sectionmark{The Stochastic Assessment Algorithm}
\label{assessment}
The general idea of the assessment algorithm is to gradually update, after each response of the subject, the distribution of probabilities on the collection of states.  On each step of the assessment, the system selects an item, and presents a randomly chosen instance of that item to the student.  The student's response is evaluated and classified as ``correct'' or ``false''.  The result serves to update the probability distribution on the set of states.  The new distribution is the starting point of the next step.  Ultimately, only one or a few states will remain with a high probability.  The system then chooses the final state.
 
The assessment algorithm we just sketched is applicable in the `straightforward situation', that is, when the learning space (or the knowledge structure for that matter) is moderately large, with a domain not exceeding $50$ items.  Such learning spaces can serve in the design of some placement tests, for example. In Subsection~\ref{parallel}, we deal with the more usual case of domains having hundreds of items.  
On such large domains, the application of the algorithm requires 
considerably more sophistication because the number of states becomes so large that operating on the probability distribution on the set of states is unmanageable. The solution outlined in Subsection~\ref{parallel} is to build a suitable partition of the domain and to perform parallel (simultaneous, mutually informative) 
assessments on the projections on all the subdomains. 
The final state is constructed by combining the outcomes obtained in each of the parallel assessments.

\subsection{Sketch of the algorithm in the straightforward situation}
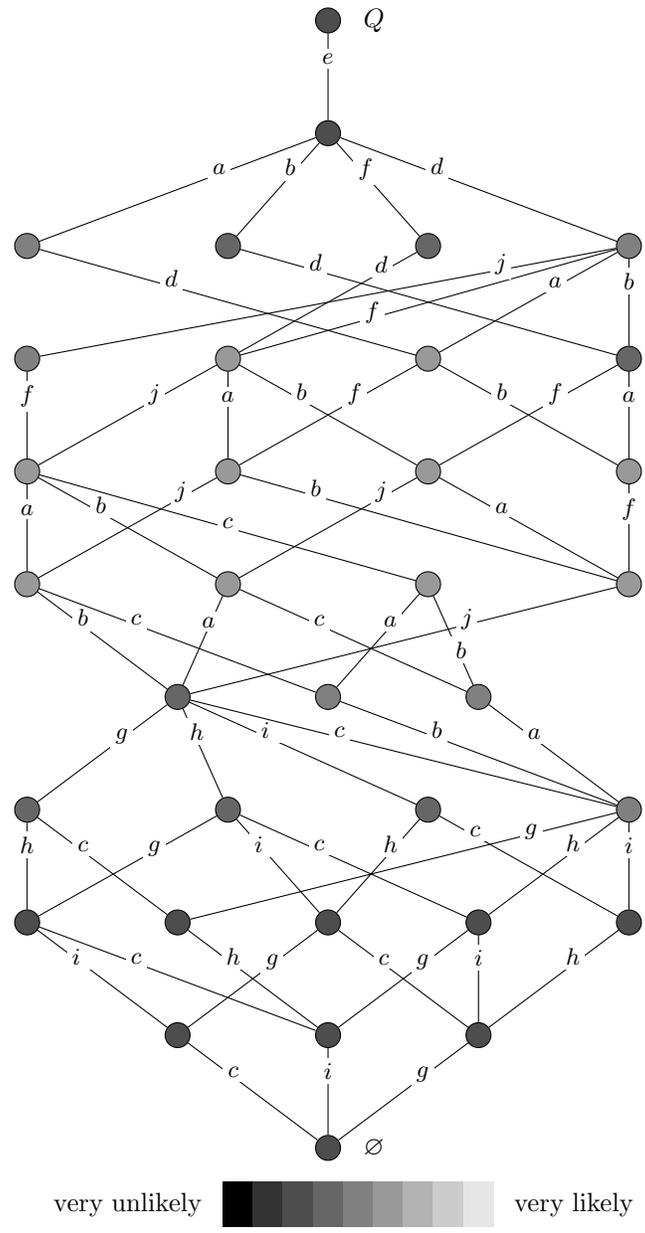
\begin{figure}[t!]
\begin{center}
\begin{tikzpicture}[yscale=1.5]
\begin{scope}[every node/.style={circle,scale=1,draw}]
\node[fill=black!70] (es) at ( 0,0) {};
\node[fill=black!70] (c) at (-2,1) {};
\node[fill=black!70] (i) at ( 0,1) {};
\node[fill=black!70] (g) at ( 2,1) {};
\node[fill=black!70] (ci) at (-4,2) {};
\node[fill=black!70] (gi) at ( 2,2) {};
\node[fill=black!70] (hi) at (-2,2){};
\node[fill=black!70] (gh) at ( 4,2) {};
\node[fill=black!70] (cg) at ( 0,2) {};
\node[fill=black!60] (chi) at (-4,3) {};
\node[fill=black!60] (cgi) at (-1.33,3) {};
\node[fill=black!60] (cgh) at ( 1.33,3) {};
\node[fill=black!50] (ghi) at ( 4,3) {};
\node[fill=black!60] (cghi) at (-2,4) {}; 
\node[fill=black!50] (bghi) at ( 0,4) {};
\node[fill=black!50] (aghi) at ( 2,4) {};  
\node[fill=black!40] (bcghi) at (-4,5) {};%
\node[fill=black!40] (acghi) at (-1.33,5) {};
\node[fill=black!40] (abghi) at ( 1.33,5) {};
\node[fill=black!40] (cghij) at ( 4,5) {};
\node[fill=black!40] (abcghi) at (-4,6) {};
\node[fill=black!40] (bcghij) at (-1.33,6) {};%
\node[fill=black!40] (acghij) at ( 1.33,6) {};
\node[fill=black!40] (cfghij) at ( 4,6) {};
\node[fill=black!50] (abcfghi) at (-4,7) {};
\node[fill=black!40] (abcghij) at (-1.33,7) {};%
\node[fill=black!40] (bcfghij) at ( 1.33,7) {};
\node[fill=black!60] (acfghij) at ( 4,7) {};
\node[fill=black!50] (bcdfghij) at (-4,8) {};%
\node[fill=black!60] (acdfghij) at (-1.33,8) {};
\node[fill=black!60] (abcdghij) at ( 1.33,8) {};
\node[fill=black!50] (abcfghij) at ( 4,8) {};
\node[fill=black!70] (abcdfghij) at (0,9) {};
\node[fill=black!70] (Q) at (0,10) {};
\end{scope}

\begin{scope}[every node/.style={font=\small, pos=0.65, fill=white,
anchor=base,  inner sep=1.8pt}]
\draw (es) --node{$c$} (c);
\draw (es) --node{$i$} (i);
\draw (es) --node{$g$} (g);
\draw (c) --node{$i$\;\;} (ci);
\draw (c) --node{$g$} (cg);
\draw (i) --node{$h$} (hi);
\draw (i) --node{$g$} (gi);
\draw (g) --node{$c$} (cg);
\draw (i) --node{$c$} (ci);
\draw (g) --node{$i$} (gi);
\draw (g) --node{$h$} (gh);
\draw (ci) --node{$h$} (chi);
\draw (hi) --node{$c$} (chi);
\draw (cg) --node{$i$\;\;} (cgi);
\draw (gi) --node{$c$} (cgi);
\draw (gh) --node[pos=0.8]{$c$} (cgh);
\draw (ci) --node{$g$} (cgi);
\draw (hi) --node[pos=0.8]{$g$} (ghi);
\draw (cg) --node{$h$} (cgh);
\draw (gh) --node{$i$} (ghi);
\draw (gi) --node{$h$} (ghi);
\draw (chi) --node{$g$} (cghi);
\draw (cgi) --node{$h$} (cghi);
\draw (cgh) --node{$i$\;} (cghi);
\draw (ghi) --node{$c$} (cghi);
\draw (ghi) --node{$b$} (bghi);
\draw (ghi) --node{$a$} (aghi);
\draw (cghi) --node{$b$} (bcghi);
\draw (cghi) --node{$a$} (acghi);
\draw (cghi) --node{$j$} (cghij);
\draw (bghi) --node{$a$} (abghi);
\draw (aghi) --node{$c$} (acghi);
\draw (bghi) --node{$c$} (bcghi);
\draw (aghi) --node[pos=0.3]{$b$} (abghi);
\draw (cghij) --node{$a$} (acghij);
\draw (cghij) --node[pos=0.8]{$b$} (bcghij);
\draw (cghij) --node{$f$} (cfghij);
\draw (bcghi) --node[pos=0.8]{$j$} (bcghij);
\draw (bcghi) --node{$a$} (abcghi);
\draw (acghi) --node[pos=0.8]{$j$} (acghij);
\draw (acghi) --node{$b$} (abcghi);
\draw (abghi) --node[pos=0.5]{$c$} (abcghi);
\draw (abcghi) --node{$j$} (abcghij);
\draw (abcghi) --node{$f$} (abcfghi);
\draw (bcghij) --node{$f$} (bcfghij);
\draw (bcghij) --node{$a$} (abcghij);
\draw (acghij) --node{$f$} (acfghij);
\draw (acghij) --node{$b$} (abcghij);
\draw (cfghij) --node{$b$} (bcfghij);
\draw (cfghij) --node{$a$} (acfghij);
\draw (bcfghij) --node{$d$} (bcdfghij);
\draw (bcfghij) --node{$a$} (abcfghij);
\draw (acfghij) --node[pos=0.8]{$d$} (acdfghij);
\draw (acfghij) --node{$b$} (abcfghij);
\draw (abcghij) --node[pos=0.35]{$f$} (abcfghij);
\draw (abcghij) --node[pos=0.8]{$d$} (abcdghij);
\draw (abcfghi) --node[pos=0.8]{$j$} (abcfghij);
\draw (abcfghij) --node{$d$} (abcdfghij);
\draw (bcdfghij) --node{$a$} (abcdfghij);
\draw (acdfghij) --node{$b$} (abcdfghij);
\draw (abcdghij) --node{$f$} (abcdfghij);
\draw  (abcdfghij) --node{$e$} (Q);
\end{scope}

\node[anchor=west] at (Q) {\quad$Q$};
\node[anchor=west] at (es) {\quad$\es$};

\begin{scope}[xshift=-1cm,
yshift=-0.7cm,scale=0.2]
\fill[black] (-2,0) rectangle node [anchor=east] {very unlikely\quad~} (0,2) ;
\fill[black!80] ( 0,0) rectangle ( 2,2);
\fill[black!70] ( 2,0) rectangle ( 4,2) ;
\fill[black!60] ( 4,0) rectangle ( 6,2);
\fill[black!50] ( 6,0) rectangle ( 8,2);
\fill[black!40] ( 8,0) rectangle (10,2) ;
\fill[black!30] (10,0) rectangle (12,2);
\fill[black!20] (12,0) rectangle (14,2);
\fill[black!10] (14,0) rectangle node [black,anchor=west] {\quad very likely} (16,2);
\end{scope}

\end{tikzpicture}
\end{center}
\caption[Initial probabilities of all the states in an assessment]{\small\rm Initial probabilities of all the states at the beginning of the assessment. The probabilities are marked by the shading of the circles representing the states. Dark grey means very unlikely, and bright grey very likely.}
\label{initial}
\end{figure}
We suppose that, at the outset of the algorithm execution, there exists some probability distribution on the collection of 
states\footnote{This is one of the reasons why the algorithm cannot be readily applied in the case of large domains: we cannot manage a probability distribution on a set containing billions of states.}. Such a probability distribution may be inferred from some information on the population that the testee belongs to. Failing such information, we may simply assume that the distribution is uniform.
The assessment is adaptive. On each of its main steps, the algorithm applies a Bayesian type updating operator of the current probability distribution, producing thus for each knowledge state an estimate of  the probability that the student be in this state.
 
We keep illustrating our discussion by our example of the 10 items in Beginning Algebra and give in Figure~\ref{initial} a picture of the learning space at the beginning of the assessment.  Notice that the probability values for the states are suggested by the strength of gray (the lighter the disk is, the higher the probability value is).

Figure~\ref{transition_diagram_ter}
summarizes the sequence of events on each step.  We call such steps \subjindex{trial}\term{trials}.  At the beginning of each trial, the algorithm picks the `most informative item'\footnote{For the technical meaning of this term, see the Questioning Rule on page~\pageref{Questioning Rule}.} to give the student; for doing so, it relies on the current, estimated probabilities of the states.  Next, it randomly chooses an instance of the item.  
For example, if the item selected is 
 \begin{align*}
  a:\quad &\text{Quotients of expressions involving exponents}\\[2mm]
  \noalign{the instance could be}
  &\text{Simplify the expression}\quad \frac{a^4b^5}{5a^6b}\quad \text{as much as possible}.
  \end{align*}
The algorithm then proposes the instance to the student. It records the response and checks the correctness.  To complete the trial, the algorithm uses the information just collected in order to update the probability distribution on the collection of states.  Of course, we still need to specify how the item selection and the probability updating are performed (see the Questioning Rule and the Updating Rule below, page~\pageref{Questioning Rule}).

\begin{figure}[h]
\begin{center}
\begin{tikzpicture}[scale=0.8]
\node[draw,rectangle] (proba) at (0,0) 
{State probabilities on trial $n$};
\node[draw,rectangle] (instance) at (2.1,-3) 
{Selected instance};
\node[draw,rectangle] (correctness) at (4.2,-6) 
{Response correctness};
\node[draw,rectangle] (new proba) at (0,-10) 
{State probabilities  on trial $n+1$};
\node[rectangle, draw, rounded corners=3mm] (select) at (8,-1.5) 
{Select item instance};
\node[draw,double,rounded corners] (quest rule) at (8,-0.5)
{Questioning Rule};
\node[draw,rectangle, rounded corners=3mm] (check) at (8,-4.5) 
{Check student's response};

\node[rectangle, draw, rounded corners=3mm] (update) at (8,-8.5) 
{Update state probabilities};
\node[draw,double,rounded corners] (updating rule) at (8,-7.5)
{Updating Rule};

\node[draw,circle,fill=white] (junction) at (0,-8.5) {};

\draw[->,thick,>=triangle 45] (proba) -- (instance);
\draw[->,thick,>=triangle 45] (instance) -- (correctness);
\draw (correctness) -- (junction);
\draw (proba) -- (junction);
\draw (instance) -- (junction);
\draw[->,thick,>=triangle 45] (junction) -- (new proba);

\draw[decorate,decoration=snake] (1.05,-1.5) -- (select);
\node[draw,circle,fill=white] at (1.05,-1.5) {};

\draw[decorate,decoration=snake] (3.15,-4.5) -- (check);
\node[draw,circle,fill=white] at (3.15,-4.5) {};

\draw[decorate,decoration=snake] (junction) -- (update);

\draw (quest rule) -- (select);
\draw (updating rule) -- (update);

\end{tikzpicture}
\end{center}
\caption[A representation of the assessment from trial $n$ to trial $n+1$]{Top to bottom, a schematic representation of the assessment from trial $n$ to trial $n+1$: on the left the data acted upon, on the right the general instructions  executed according to specific rules.}
\label{transition_diagram_ter}
\end{figure}
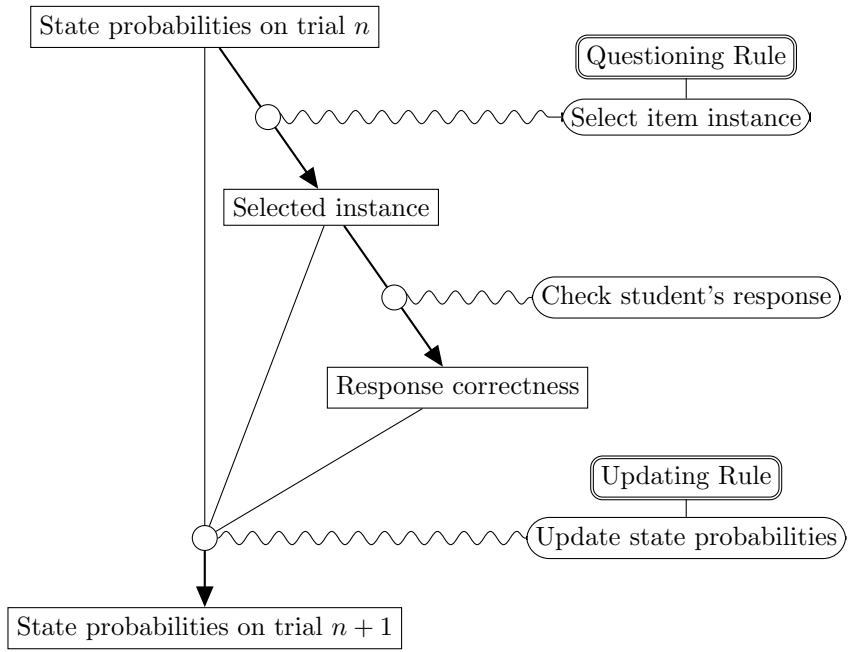

The basic idea of the assessment algorithm is to ensure that, in the course of the assessment, the probability distribution becomes gradually concentrated on a knowledge state, or on a few knowledge states which are close together\footnote{From the standpoint of their symmetric difference.}. If several states end up with the same high probability, the system chooses randomly between them.  
In practical situations, the assessment terminates in about 25--35 trials with the algorithm we will describe.  We first provide an illustration and then introduce the required notation.

Figure~\ref{initial} pictured a (possible) probability distribution on the set of states at the beginning of the assessment.  The brightest shades indicate the high probability states, the darkest ones represent the lowest probability states.
We infer from the picture that the student may have master about 4--7 items.  On page \pageref{three_parallel}, Figure~\ref{three_parallel} shows the graphs of three later situations.  Suppose the student gives during the first trial a correct response to item~$a$.  The graph on the left shows the updated probability values: note the  increase of the probability values for states containing~$a$, and the decrease of the values for the states not containing~$a$.  
The middle graph sketches the estimated distribution at the end of trial $2$, after the student gave next a false response to item~$f$.
The last graph represents the  typical situation at the end of the assessment: only one state remains with a high probability, which is here $\{a$, $c$, $g$, $h$, $i$, $j\}$.  The algorithm would choose that state as representing the competence of the student in that part of  Beginning Algebra.

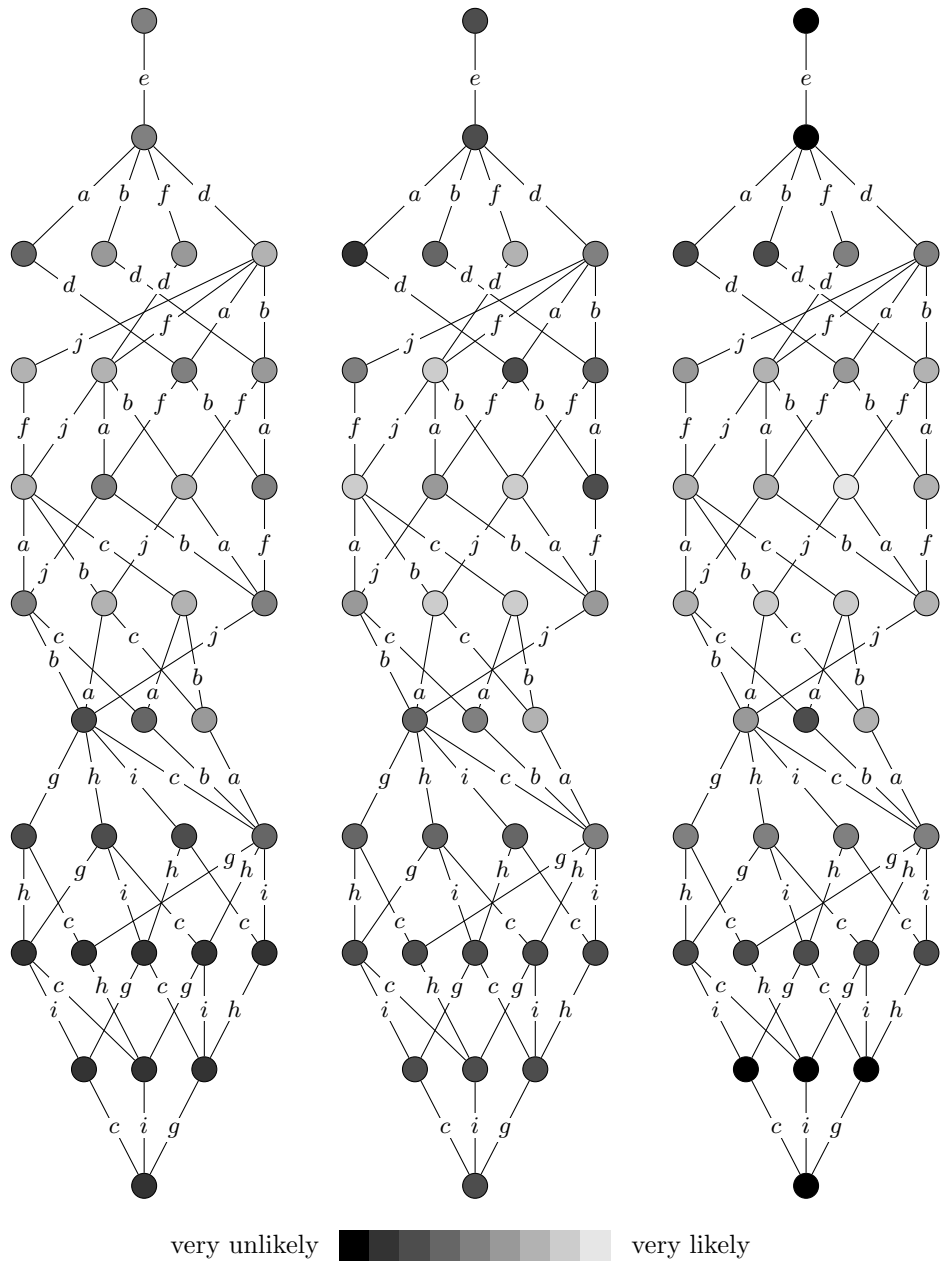
\begin{figure}[t!]
\begin{tikzpicture}
\begin{scope}[yscale=1.55, 
xscale=0.4,every node/.style={circle,scale=1,draw}]
\node[fill=black!80] (es) at ( 0,0) {};
\node[fill=black!80] (c) at (-2,1) {};
\node[fill=black!80] (i) at ( 0,1) {};
\node[fill=black!80] (g) at ( 2,1) {};
\node[fill=black!80] (ci) at (-4,2) {};
\node[fill=black!80] (hi) at (-2,2){};
\node[fill=black!80] (cg) at ( 0,2) {};
\node[fill=black!80] (gi) at ( 2,2) {};
\node[fill=black!80] (gh) at ( 4,2) {};
\node[fill=black!70] (chi) at (-4,3) {};
\node[fill=black!70] (cgi) at (-1.33,3) {};
\node[fill=black!70] (cgh) at ( 1.33,3) {};
\node[fill=black!60] (ghi) at ( 4,3) {};
\node[fill=black!70] (cghi) at (-2,4) {};
\node[fill=black!60] (bghi) at ( 0,4) {};
\node[fill=black!40] (aghi) at ( 2,4) {};
\node[fill=black!50] (bcghi) at (-4,5) {};
\node[fill=black!30] (acghi) at (-1.33,5) {};
\node[fill=black!30] (abghi) at ( 1.33,5) {};
\node[fill=black!50] (cghij) at ( 4,5) {};
\node[fill=black!30] (abcghi) at (-4,6) {};
\node[fill=black!50] (bcghij) at (-1.33,6) {};%
\node[fill=black!30] (acghij) at ( 1.33,6) {};
\node[fill=black!50] (cfghij) at ( 4,6) {};
\node[fill=black!30] (abcfghi) at (-4,7) {};
\node[fill=black!30] (abcghij) at (-1.33,7) {};%
\node[fill=black!50] (bcfghij) at ( 1.33,7) {};
\node[fill=black!40] (acfghij) at ( 4,7) {};
\node[fill=black!60] (bcdfghij) at (-4,8) {};%
\node[fill=black!40] (acdfghij) at (-1.33,8) {};
\node[fill=black!40] (abcdghij) at ( 1.33,8) {};
\node[fill=black!30] (abcfghij) at ( 4,8) {};

\node[fill=black!50] (abcdfghij) at (0,9) {};
\node[fill=black!50] (Q) at (0,10) {};
\end{scope}
\begin{scope}[every node/.style={font=\small,
fill=white, pos=0.5,
inner sep=1.7pt,
anchor=mid}]
\draw (es) --node{$c$} (c);
\draw (es) --node{$i$} (i);
\draw (es) --node{$g$} (g);
\draw (c) --node{$i$} (ci);
\draw (c) --node[font=\small,fill=white,pos=0.75,inner sep=1.7pt,
anchor=mid]{$g$} (cg);
\draw (i) --node[font=\small,fill=white,pos=0.75,inner sep=1.7pt,
anchor=mid]{$h$} (hi);
\draw (i) --node[font=\small,fill=white,pos=0.75,inner sep=1.7pt,
anchor=mid]{$g$} (gi);
\draw (g) --node[font=\small,fill=white,pos=0.75,inner sep=1.7pt,
anchor=mid]{$c$} (cg);
\draw (i) --node[font=\small,fill=white,pos=0.75,inner sep=1.7pt,
anchor=mid]{$c$} (ci);
\draw (g) --node{$i$} (gi);
\draw (g) --node{$h$} (gh);
\draw (ci) --node{$h$} (chi);
\draw (hi) --node[font=\small,fill=white,pos=0.2,inner sep=1.8pt,
anchor=mid]{$c$} (chi);
\draw (cg) --node{$i$} (cgi);
\draw (gi) --node[font=\small,fill=white,pos=0.2,inner sep=1.8pt,
anchor=mid]{$c$} (cgi);
\draw (gh) --node[font=\small,fill=white,pos=0.2,inner sep=1.8pt,
anchor=mid]{$c$} (cgh);
\draw (ci) --node[font=\small,fill=white,pos=0.75,inner sep=1.8pt,
anchor=mid]{$g$} (cgi);
\draw (hi) --node[font=\small,fill=white,pos=0.85,inner sep=1.8pt,
anchor=mid]{$g$} (ghi);
\draw (cg) --node[font=\small,fill=white,pos=0.75,inner sep=1.8pt,
anchor=mid]{$h$} (cgh);
\draw (gh) --node{$i$} (ghi);
\draw (gi) --node[font=\small,fill=white,pos=0.75,inner sep=1.8pt,
anchor=mid]{$h$} (ghi);
\draw (chi) --node{$g$} (cghi);
\draw (cgi) --node{$h$} (cghi);
\draw (cgh) --node{$i$} (cghi);
\draw (ghi) --node{$c$} (cghi);
\draw (ghi) --node{$b$} (bghi);
\draw (ghi) --node{$a$} (aghi);
\draw (cghi) --node{$b$} (bcghi);
\draw (cghi) --node[pos=0.15]{$a$} (acghi);
\draw (cghi) --node[near end]{$j$} (cghij);
\draw (bghi) --node[pos=0.15]{$a$} (abghi);
\draw (aghi) --node[near end]{$c$} (acghi);
\draw (bghi) --node[near end]{$c$} (bcghi);
\draw (aghi) --node[pos=0.3]{$b$} (abghi);
\draw (cghij) --node{$a$} (acghij);
\draw (cghij) --node{$b$} (bcghij);
\draw (cghij) --node{$f$} (cfghij);
\draw (bcghi) --node[pos=0.2]{$j$} (bcghij);
\draw (bcghi) --node{$a$} (abcghi);
\draw (acghi) --node[pos=0.5]{$j$} (acghij);
\draw (acghi) --node[pos=0.2]{$b$} (abcghi);
\draw (abghi) --node{$c$} (abcghi);
\draw (abcghi) --node{$j$} (abcghij);
\draw (abcghi) --node{$f$} (abcfghi);
\draw (bcghij) --node[pos=0.75]{$f$} (bcfghij);
\draw (bcghij) --node{$a$} (abcghij);
\draw (acghij) --node[pos=0.75]{$f$} (acfghij);
\draw (acghij) --node[pos=0.75]{$b$} (abcghij);
\draw (cfghij) --node[pos=0.75]{$b$} (bcfghij);
\draw (cfghij) --node{$a$} (acfghij);
\draw (bcfghij) --node[pos=0.75]{$d$} (bcdfghij);
\draw (bcfghij) --node{$a$} (abcfghij);
\draw (acfghij) --node[pos=0.85]{$d$} (acdfghij);
\draw (acfghij) --node{$b$} (abcfghij);
\draw (abcghij) --node[pos=0.37]{$f$} (abcfghij);
\draw (abcghij) --node[pos=0.8]{$d$} (abcdghij);
\draw (abcfghi) --node[pos=0.2]{$j$} (abcfghij);
\draw (abcfghij) --node{$d$} (abcdfghij);
\draw (bcdfghij) --node{$a$} (abcdfghij);
\draw (acdfghij) --node{$b$} (abcdfghij);
\draw (abcdghij) --node{$f$} (abcdfghij);
\draw  (abcdfghij) --node{$e$} (Q);
\end{scope}
\begin{scope}[yscale=1.55,xscale=0.4,every node/.style={circle,scale=1,draw},xshift=11cm]
\node[fill=black!70] (es) at ( 0,0) {};
\node[fill=black!70] (c) at (-2,1) {};
\node[fill=black!70] (i) at ( 0,1) {};
\node[fill=black!70] (g) at ( 2,1) {};
\node[fill=black!70] (ci) at (-4,2) {};
\node[fill=black!70] (gi) at ( 2,2) {};
\node[fill=black!70] (hi) at ( -2,2){};
\node[fill=black!70] (gh) at ( 4,2) {};
\node[fill=black!70] (cg) at ( 0,2) {};
\node[fill=black!60] (chi) at (-4,3) {};
\node[fill=black!60] (cgi) at (-1.33,3) {};
\node[fill=black!60] (cgh) at (1.33,3) {};
\node[fill=black!50] (ghi) at ( 4,3) {};
\node[fill=black!60] (cghi) at (-2,4) {};
\node[fill=black!50] (bghi) at ( 0,4) {};
\node[fill=black!30] (aghi) at ( 2,4) {};
\node[fill=black!40] (bcghi) at (-4,5) {};%
\node[fill=black!20] (acghi) at (-1.33,5) {}; 
\node[fill=black!20] (abghi) at ( 1.33,5) {};
\node[fill=black!40] (cghij) at ( 4,5) {};
\node[fill=black!20] (abcghi) at (-4,6) {};
\node[fill=black!40] (bcghij) at (-1.33,6) {};%
\node[fill=black!20] (acghij) at ( 1.33,6) {};
\node[fill=black!70] (cfghij) at ( 4,6) {};
\node[fill=black!50] (abcfghi) at (-4,7) {};
\node[fill=black!20] (abcghij) at (-1.33,7) {};%
\node[fill=black!70] (bcfghij) at ( 1.33,7) {};
\node[fill=black!60] (acfghij) at ( 4,7) {};
\node[fill=black!80] (bcdfghij) at (-4,8) {};%
\node[fill=black!60] (acdfghij) at ( -1.33,8) {};
\node[fill=black!30] (abcdghij) at ( 1.33,8) {};
\node[fill=black!50] (abcfghij) at ( 4,8) {};
\node[fill=black!70] (abcdfghij) at (0,9) {};
\node[fill=black!70] (Q) at (0,10) {};
\end{scope}
\begin{scope}[every node/.style={font=\small,
fill=white, pos=0.5,
inner sep=1.7pt,
anchor=mid}]
\draw (es) --node{$c$} (c);
\draw (es) --node{$i$} (i);
\draw (es) --node{$g$} (g);
\draw (c) --node{$i$} (ci);
\draw (c) --node[font=\small,fill=white,pos=0.75,inner sep=1.7pt,
anchor=mid]{$g$} (cg);
\draw (i) --node[font=\small,fill=white,pos=0.75,inner sep=1.7pt,
anchor=mid]{$h$} (hi);
\draw (i) --node[font=\small,fill=white,pos=0.75,inner sep=1.7pt,
anchor=mid]{$g$} (gi);
\draw (g) --node[font=\small,fill=white,pos=0.75,inner sep=1.7pt,
anchor=mid]{$c$} (cg);
\draw (i) --node[font=\small,fill=white,pos=0.75,inner sep=1.7pt,
anchor=mid]{$c$} (ci);
\draw (g) --node{$i$} (gi);
\draw (g) --node{$h$} (gh);
\draw (ci) --node{$h$} (chi);
\draw (hi) --node[font=\small,fill=white,pos=0.2,inner sep=1.8pt,
anchor=mid]{$c$} (chi);
\draw (cg) --node{$i$} (cgi);
\draw (gi) --node[font=\small,fill=white,pos=0.2,inner sep=1.8pt,
anchor=mid]{$c$} (cgi);
\draw (gh) --node[font=\small,fill=white,pos=0.2,inner sep=1.8pt,
anchor=mid]{$c$} (cgh);
\draw (ci) --node[font=\small,fill=white,pos=0.75,inner sep=1.8pt,
anchor=mid]{$g$} (cgi);
\draw (hi) --node[font=\small,fill=white,pos=0.85,inner sep=1.8pt,
anchor=mid]{$g$} (ghi);
\draw (cg) --node[font=\small,fill=white,pos=0.75,inner sep=1.8pt,
anchor=mid]{$h$} (cgh);
\draw (gh) --node{$i$} (ghi);
\draw (gi) --node[font=\small,fill=white,pos=0.75,inner sep=1.8pt,
anchor=mid]{$h$} (ghi);
\draw (chi) --node{$g$} (cghi);
\draw (cgi) --node{$h$} (cghi);
\draw (cgh) --node{$i$} (cghi);
\draw (ghi) --node{$c$} (cghi);
\draw (ghi) --node{$b$} (bghi);
\draw (ghi) --node{$a$} (aghi);
\draw (cghi) --node{$b$} (bcghi);
\draw (cghi) --node[pos=0.15]{$a$} (acghi);
\draw (cghi) --node[near end]{$j$} (cghij);
\draw (bghi) --node[pos=0.15]{$a$} (abghi);
\draw (aghi) --node[near end]{$c$} (acghi);
\draw (bghi) --node[near end]{$c$} (bcghi);
\draw (aghi) --node[pos=0.3]{$b$} (abghi);
\draw (cghij) --node{$a$} (acghij);
\draw (cghij) --node{$b$} (bcghij);
\draw (cghij) --node{$f$} (cfghij);
\draw (bcghi) --node[pos=0.2]{$j$} (bcghij);
\draw (bcghi) --node{$a$} (abcghi);
\draw (acghi) --node[pos=0.5]{$j$} (acghij);
\draw (acghi) --node[pos=0.2]{$b$} (abcghi);
\draw (abghi) --node{$c$} (abcghi);
\draw (abcghi) --node{$j$} (abcghij);
\draw (abcghi) --node{$f$} (abcfghi);
\draw (bcghij) --node[pos=0.75]{$f$} (bcfghij);
\draw (bcghij) --node{$a$} (abcghij);
\draw (acghij) --node[pos=0.75]{$f$} (acfghij);
\draw (acghij) --node[pos=0.75]{$b$} (abcghij);
\draw (cfghij) --node[pos=0.75]{$b$} (bcfghij);
\draw (cfghij) --node{$a$} (acfghij);
\draw (bcfghij) --node[pos=0.75]{$d$} (bcdfghij);
\draw (bcfghij) --node{$a$} (abcfghij);
\draw (acfghij) --node[pos=0.85]{$d$} (acdfghij);
\draw (acfghij) --node{$b$} (abcfghij);
\draw (abcghij) --node[pos=0.37]{$f$} (abcfghij);
\draw (abcghij) --node[pos=0.8]{$d$} (abcdghij);
\draw (abcfghi) --node[pos=0.2]{$j$} (abcfghij);
\draw (abcfghij) --node{$d$} (abcdfghij);
\draw (bcdfghij) --node{$a$} (abcdfghij);
\draw (acdfghij) --node{$b$} (abcdfghij);
\draw (abcdghij) --node{$f$} (abcdfghij);
\draw  (abcdfghij) --node{$e$} (Q);
\end{scope}
\begin{scope}[yscale=1.55,xscale=0.4,every node/.style={circle,scale=1,draw},xshift=22cm]
\node[fill=black!100] (es) at     ( 0,0) {};
\node[fill=black!100] (c) at     (-2,1) {};
\node[fill=black!100] (i) at     ( 0,1) {};
\node[fill=black!100] (g) at     ( 2,1) {};
\node[fill=black!70] (ci) at     (-4,2) {};
\node[fill=black!70] (hi) at     (-2,2){};
\node[fill=black!70] (cg) at     ( 0,2) {};
\node[fill=black!70] (gi) at     ( 2,2) {};
\node[fill=black!70] (gh) at     ( 4,2) {};

\node[fill=black!50] (chi) at     (-4,3) {};
\node[fill=black!50] (cgi) at     (-1.33,3) {};
\node[fill=black!50] (cgh) at     ( 1.33,3) {};
\node[fill=black!50] (ghi) at     ( 4,3) {};
\node[fill=black!40] (cghi) at     (-2,4) {};
\node[fill=black!70] (bghi) at     ( 0,4) {};
\node[fill=black!30] (aghi) at     ( 2,4) {};
\node[fill=black!30] (bcghi) at     (-4,5) {};
\node[fill=black!20] (acghi) at     (-1.33,5) {};
\node[fill=black!20] (abghi) at     ( 1.33,5) {};
\node[fill=black!30] (cghij) at     ( 4,5) {};
\node[fill=black!30] (abcghi) at     (-4,6) {};
\node[fill=black!30] (bcghij) at     (-1.33,6) {};
\node[fill=black!10] (acghij) at     ( 1.33,6) {};
\node[fill=black!30] (cfghij) at     ( 4,6) {};
\node[fill=black!40] (abcfghi) at     (-4,7) {};
\node[fill=black!30] (abcghij) at     (-1.33,7) {};
\node[fill=black!40] (bcfghij) at     ( 1.33,7) {};
\node[fill=black!30] (acfghij) at     ( 4,7) {};
\node[fill=black!70] (bcdfghij) at     (-4,8) {};
\node[fill=black!70] (acdfghij) at     (-1.33,8) {};
\node[fill=black!50] (abcdghij) at     ( 1.33,8) {};
\node[fill=black!50] (abcfghij) at     ( 4,8) {};
\node[fill=black!100] (abcdfghij) at     (0,9) {};
\node[fill=black!100] (Q) at     (0,10) {};
\end{scope}
\begin{scope}[every node/.style={font=\small,
fill=white, pos=0.5,
inner sep=1.7pt,
anchor=mid}]
\draw (es) --node{$c$} (c);
\draw (es) --node{$i$} (i);
\draw (es) --node{$g$} (g);
\draw (c) --node{$i$} (ci);
\draw (c) --node[font=\small,fill=white,pos=0.75,inner sep=1.7pt,
anchor=mid]{$g$} (cg);
\draw (i) --node[font=\small,fill=white,pos=0.75,inner sep=1.7pt,
anchor=mid]{$h$} (hi);
\draw (i) --node[font=\small,fill=white,pos=0.75,inner sep=1.7pt,
anchor=mid]{$g$} (gi);
\draw (g) --node[font=\small,fill=white,pos=0.75,inner sep=1.7pt,
anchor=mid]{$c$} (cg);
\draw (i) --node[font=\small,fill=white,pos=0.75,inner sep=1.7pt,
anchor=mid]{$c$} (ci);
\draw (g) --node{$i$} (gi);
\draw (g) --node{$h$} (gh);
\draw (ci) --node{$h$} (chi);
\draw (hi) --node[font=\small,fill=white,pos=0.2,inner sep=1.8pt,
anchor=mid]{$c$} (chi);
\draw (cg) --node{$i$} (cgi);
\draw (gi) --node[font=\small,fill=white,pos=0.2,inner sep=1.8pt,
anchor=mid]{$c$} (cgi);
\draw (gh) --node[font=\small,fill=white,pos=0.2,inner sep=1.8pt,
anchor=mid]{$c$} (cgh);
\draw (ci) --node[font=\small,fill=white,pos=0.75,inner sep=1.8pt,
anchor=mid]{$g$} (cgi);
\draw (hi) --node[font=\small,fill=white,pos=0.85,inner sep=1.8pt,
anchor=mid]{$g$} (ghi);
\draw (cg) --node[font=\small,fill=white,pos=0.75,inner sep=1.8pt,
anchor=mid]{$h$} (cgh);
\draw (gh) --node{$i$} (ghi);
\draw (gi) --node[font=\small,fill=white,pos=0.75,inner sep=1.8pt,
anchor=mid]{$h$} (ghi);
\draw (chi) --node{$g$} (cghi);
\draw (cgi) --node{$h$} (cghi);
\draw (cgh) --node{$i$} (cghi);
\draw (ghi) --node{$c$} (cghi);
\draw (ghi) --node{$b$} (bghi);
\draw (ghi) --node{$a$} (aghi);
\draw (cghi) --node{$b$} (bcghi);
\draw (cghi) --node[pos=0.15]{$a$} (acghi);
\draw (cghi) --node[near end]{$j$} (cghij);
\draw (bghi) --node[pos=0.15]{$a$} (abghi);
\draw (aghi) --node[near end]{$c$} (acghi);
\draw (bghi) --node[near end]{$c$} (bcghi);
\draw (aghi) --node[pos=0.3]{$b$} (abghi);
\draw (cghij) --node{$a$} (acghij);
\draw (cghij) --node{$b$} (bcghij);
\draw (cghij) --node{$f$} (cfghij);
\draw (bcghi) --node[pos=0.2]{$j$} (bcghij);
\draw (bcghi) --node{$a$} (abcghi);
\draw (acghi) --node[pos=0.5]{$j$} (acghij);
\draw (acghi) --node[pos=0.2]{$b$} (abcghi);
\draw (abghi) --node{$c$} (abcghi);
\draw (abcghi) --node{$j$} (abcghij);
\draw (abcghi) --node{$f$} (abcfghi);
\draw (bcghij) --node[pos=0.75]{$f$} (bcfghij);
\draw (bcghij) --node{$a$} (abcghij);
\draw (acghij) --node[pos=0.75]{$f$} (acfghij);
\draw (acghij) --node[pos=0.75]{$b$} (abcghij);
\draw (cfghij) --node[pos=0.75]{$b$} (bcfghij);
\draw (cfghij) --node{$a$} (acfghij);
\draw (bcfghij) --node[pos=0.75]{$d$} (bcdfghij);
\draw (bcfghij) --node{$a$} (abcfghij);
\draw (acfghij) --node[pos=0.85]{$d$} (acdfghij);
\draw (acfghij) --node{$b$} (abcfghij);
\draw (abcghij) --node[pos=0.37]{$f$} (abcfghij);
\draw (abcghij) --node[pos=0.8]{$d$} (abcdghij);
\draw (abcfghi) --node[pos=0.2]{$j$} (abcfghij);
\draw (abcfghij) --node{$d$} (abcdfghij);
\draw (bcdfghij) --node{$a$} (abcdfghij);
\draw (acdfghij) --node{$b$} (abcdfghij);
\draw (abcdghij) --node{$f$} (abcdfghij);
\draw  (abcdfghij) --node{$e$} (Q);
\end{scope}
\begin{scope}[xshift=3cm,yshift=-1cm,scale=0.2]
\fill[black] (-2,0) rectangle node [anchor=east] {very unlikely\quad~} (0,2) ;
\fill[black!80] ( 0,0) rectangle ( 2,2);
\fill[black!70] ( 2,0) rectangle ( 4,2) ;
\fill[black!60] ( 4,0) rectangle ( 6,2);
\fill[black!50] ( 6,0) rectangle ( 8,2);
\fill[black!40] ( 8,0) rectangle (10,2) ;
\fill[black!30] (10,0) rectangle (12,2);
\fill[black!20] (12,0) rectangle (14,2);
\fill[black!10] (14,0) rectangle node [black,anchor=west] {\quad very likely} (16,2);
\end{scope}
\end{tikzpicture}
\caption[Three successive situations along an assessment]{Left to right, three successive situations along the assessment: 
(i)~after a correct response to $a$; (ii)~next, after a false answer to $f$; (iii)~later, at the end of the assessment.}
\label{three_parallel} 
\end{figure}

We now give a precise mathematical meaning to the concepts outlined above, starting with a list of notations based on a knowledge structure $(Q,\KKK)$.


\clearpage

\noindent{\bf Mathematical notation.}\label{math_concepts}\newline
\begin{tabular}{l@{\quad}p{109mm}}
$n$ & the step number, or \indexedterm{trial number}, with $n=1$, $2$, \ldots;\\
$\KKK_q$ & the subcollection of $\KKK$ formed by the states containing $q$;\\
$\Lambda_+$ & the set of all positive probability distributions on $\KKK$;\\
$\mathbf L_n$ & a positive random probability distribution on $\KKK$:\newline we have $\mathbf L_n=L_n\in\Lambda_+$ if $L_n$ is the probability distribution on $\KKK$ at the beginning of trial $n$ (so $L_n>0$);\\
$\mathbf L_n(K)$ & a random variable (r.v.)~evaluating 
the probability of state $K$ on trial $n$;\\
$\mathbf Q_n$ & a r.v.~representing the question asked on trial $n$:\newline we have $\mathbf Q_n = q$ if $q$ in $Q$ is the question asked on trial $n$;\\
$\mathbf R_n$ & a r.v.~coding the response on trial $n$:\newline 
$\mathbf R_n = 
\begin{cases} 
1 & \text{if the response is correct},\\
0 & \text{otherwise};
\end{cases}$\\
$\mathbf W_n$ & the random history of the process from trial $1$ to trial $n$;\\
$\iota_A$ & the indicator function of a set $A$:\quad $\iota_A(q) = 
\begin{cases}  1 &\text{if } q\in A,\\
               0 &\text{if }  q\notin A;
  \end{cases}$\\
$\zeta_{q,r}$ & a collection of parameters defined for $q \in Q$, $r \in \{0,1\}$ and satisfying $1 < \zeta_{q,r}$ (see Updating Rule~[U] below). 
\end{tabular}

\vskip2mm

The formal rules governing the assessment algorithm are as follows (where $n$ is the trial number).\\[1mm]

Using the `Questioning Rule', the algorithm picks a most informative item, that is, an item which, on the basis of the current probability distribution on the set of states, has a probability of being responded to correctly as close to $.5$ as possible.  There may be more than one such item, in which case a uniform, random selection is made.
Formally, in terms of the actual probability distribution $L_n$:\\[1mm]

\noindent[Q]\,\textbf{Questioning Rule.}\label{Questioning Rule}~For all $q \in Q$ and all positive integers,
\begin{equation}\label{random_q_picking}
\mathbb{P} (\mathbf Q_n = q \;\st\; \mathbf L_n , \mathbf W_{n-1} ) =  \frac{\iota_{S(L_n)}(q)}{|S(L_n)|}
\end{equation}
where  $S(L_n)$ is the subset of $Q$ containing all those
items  $q$ minimizing  
$|2L_n(\KKK_q) -1|.$  Under this questioning rule, which is called \subjindex{half-split rule}\term{half-split}, we must have $\mathbf Q_n \in S(L_n)$ with a
probability equal to one.  The questions in the set $S(L_n)$ are then
chosen with equal probability. \\[1mm]

The `Response Rule' formalized below states that the student's response to an instance of an item is correct with probability 1 if the item belongs to the student's knowledge state $K_0$, and false with probability 1 otherwise.  While this rule plays no role in the assessment algorithm per se, it is essential in some simulation, and so is its elaborated version in the form of the `Modified Response Rule' (see page~\pageref{Modified Response Rule} below).  These rules are used, in particular, in the `Extra problem method' (see Subsection 0.18.1, page~\pageref{extra_problem}).\\[1mm]

\noindent[R]\,\textbf{Response Rule.}~There is some state $K_0$ such that, for all positive integers $n$,
\begin{equation}
\mathbb{P}(\mathbf R_n = \iota_{K_0}(q) \;\st\; \mathbf Q_n = q,\mathbf L_n,\mathbf W_{n-1})= 1\,.
\end{equation}
The state $K_0$ is the \indexedterm{latent state} representing the set of all the items currently mastered by the student.  It is the state that the assessment algorithm aims to uncover.\\[1mm]

\noindent[U]\,\textbf{Updating Rule.}~We have $\mathbb{P} (\mathbf L_1 = L) = 1$ (with $L$ the initial probability distribution).  Moreover there are real parameters $\zeta_{q,r}$ (where $q \in Q$, $r \in \{0,1\}$ and $1 < \zeta_{q,r}$) such that for any positive integer $n$, if $\mathbf L_n = L_n$, $\mathbf Q_n = q$, $\mathbf R_n = r$ and 
\begin{equation}\label{cmeqg}
\zeta_{q,r}^K = 
\begin{cases}
   1 \quad    &\text{if } \iota_K(q) \neq r,\\
   \zeta_{q,r}&\text{if } \iota_K(q) = r,
\end{cases}
\end{equation}
then
\begin{equation}
\label{cmeqgA}
L_{n+1}(K) \;=\; \frac {\zeta_{q,r}^K L_n(K)}
{\sum_{K'\in \KKK}\zeta_{q,r}^{K'} L_n(K')}.
\end{equation}
\nopagebreak[4]
This updating rule is called \subjindex{multiplicative updating rule}
\term{multiplicative with parameters} $\zeta_{q,r}$.
The operator mapping $L_n$ to $L_{n+1}$ has two important, related properties. First, it is commutative: the order of the successive items of the assessment has no effect on the result, that is, on the final probability distribution on the set of states.  Second, the operator is essentially Bayesian (this was proved by \authindex{Koppen, M.}Mathieu Koppen\footnote{Personnal communication; see Subsection 13.4.5 in \authindex{Doignon, J.-P.}\authindex{Falmagne, J.-Cl.}\citet{Falmagne_Doignon_LS}.}).\\[1mm]

It can be shown that, under the above Rules~[Q], [R] and [U], the latent state is recoverable in the sense that:
\begin{equation}
\mathbf L_n(K_0) \overset {\text{a.s.}} {\longrightarrow} 1.
\end{equation}
We recall that ``a.s.'' stands for ``almost surely''.
For a proof, see Theorem  1.6.7 in \authindex{Doignon, J.-P.}\authindex{Falmagne, J.-Cl.}\citet{Falmagne_Doignon_LS}. Note that this theorem assumes that the careless errors and the lucky guesses have probability zero (the straight case as in Definition~\ref{def straight ppks}). 

While Rules~[U], [Q], and [R] are fundamental to the assessment,
a straightforward implementation of these rules in an assessment engine is not feasible for two reasons.
One is that students often make careless errors. A solution is to amend Rule~[R] by introducing a `careless error parameter'. This would result, for example, in the modified rule:\\[1mm]

\noindent
[R']\,\textbf{Modified Response Rule.}\label{Modified Response Rule}~For all positive integers $n$,
\begin{equation}
\Prob(\mathbf R_n = \iota_{K_0}(q) \;\st\; \mathbf Q_n = q,\mathbf L_n,\mathbf W_{n-1})=\begin{cases} 1-\beta_q \quad&\text{if } q\in K_0\\
0&\text{if }q\notin K_0\,,
\end{cases}
\end{equation}
in which $\beta_q$ is the probability of making an error in responding to the item~$q$ which lies in the latent state~$K_0$. The parameters $\beta_q$ can be estimated from the data (see Subsection~\ref{extra_problem}, page \pageref{extra_problem}).
In some cases, `lucky guess' parameters may also be used, for example to deal with cases of the multiple choice paradigm. 
However, in real life situations, the occurrence of lucky guesses would render the result of the assessment so unreliable as to render it practically useless.
Multiple choice paradigms are cheap but their results are questionable and they should be avoided.
The \subjindex{\aleks}\aleks{} assessment system, which is based on the three rules~[U], [Q], and [R'], rarely use multiple choice.  When it does, the number of possible responses is so large that the probability of a lucky guess is negligible.

The second reason is that, in many real life applications, the domain formalizing an actual curriculum is typically very large, containing several hundred items.  The learning space can still be constructed but its collection of states is huge, counting many million states.  It means that the assessment algorithm cannot proceed in the straightforward manner outlined above: the probability distribution on such a large collection of states cannot be managed.  We now expose a solution.

\subsection{The parallel algorithm for large domains}
\label{parallel}
In case the domain $Q$ of the `parent learning space' $\LLL$ is large, the algorithm first partitions $Q$ into $N$ subdomains $Q^1$, $Q^2$, \dots, $Q^N$.  Here, the subdomains are approximately of equal sizes, and they are manageable because $N$ is chosen sufficiently large.  They are representatives of the parent domain $Q$, in the sense, for example, that each one of them could serve as a placement test. 
By the Projection Theorem~\ref{projection_theo}(i) (page~\pageref{projection_theo}), these $N$ subdomains determine $N$ projections $\LLL^1$, $\LLL^2$, \dots, $\LLL^N$ which are all learning spaces.  The algorithm then manages $N$ simultaneous\footnote{or maybe better said: alternating.} assessments on the $N$ projected learning spaces $\left(Q^i,\LLL^i\right)$ (where $i=1$, $2$, \dots, $N$).  The key difficulty is to immediately transfer information obtained from a response to some item~$q$ in the domain $Q^j$ of the projected learning space $\LLL^j$, to all the other $N-1$ learning spaces with appropriate updates of their state probabilities.  

The main features of the algorithm are described in \authindex{Falmagne, J.-Cl.}\authindex{Albert, D.}\authindex{Doble, C.W.}\authindex{Eppstein, D.}\authindex{Hu, X.}\citet[][see Section 8.8, pages 143--144]{Falmagne_Albert_Doble_Eppstein_Hu}. The basic ideas are as follows.
\begin{enumerate}[\quad\rm1)] 
\item On each trial, scan the $N$ learning spaces and choose the ``most informative'' item to be presented to the student. Suppose that this item is $q$, belonging to the subdomain $Q^i$ of the projected learning space $(Q^i,\LLL^i)$.
\item Record the student's response to item $q$.  Update the state probabilities of the learning space $\LLL^i$. 
\item Update the state probabilities of all the other learning subspaces $\LLL^j$, $j\neq i$. This is achieved by temporarily adding item $q$ to the other learning spaces $\LLL^j$
and working with extended distributions (cf.~Definition~\ref{def_prob_extension}).
\item At the end of the assessment, combine the information gathered in all the $N$ learning subspaces into a final knowledge state in $\LLL$. 
For details, see \authindex{Falmagne, J.-Cl.}\authindex{Albert, D.}\authindex{Doble, C.W.}\authindex{Eppstein, D.}\authindex{Hu, X.}\citet[][Subsection 8.8.1, page~144]{Falmagne_Albert_Doble_Eppstein_Hu}.
\end{enumerate}

\section[About Building Knowledge Spaces or Learning Spaces]{About Building Knowledge Spaces or Learning Spaces
\sectionmark{About Building Knowledge Spaces or Learning Spaces}}
\sectionmark{About Building Knowledge Spaces or Learning Spaces}
\label{sec_Building} 
Building a knowledge space on a given domain is a highly demanding, data driven enterprise.  Building a learning 
space is an even much more intricate process.  Chapters 15 and 16 of \authindex{Doignon, J.-P.}\authindex{Falmagne, J.-Cl.}\citet{Falmagne_Doignon_LS} describe a method in details. We only sketch the main ideas here, proceeding in two steps.
We first describe some theoretical constructions which are instrumental for building knowledge spaces. We then show how these tools can be amended in the case of learning spaces.

The basic algorithm is the `\query' routine, which is due to \authindex{Koppen, M.}\citet{koppen_93} and  \authindex{M\"uller, C.}\citet{mueller:roskam} \authindex{Dowling, C.E.}\citep[see also][]{dowling:fischer}.  
The routine manages the following type of queries, which are either put to expert teachers or have responses inferred from assessment data. 

\begin{enumerate}[{[Q]}]
\item \textsl{Suppose that a student under examination has just provided wrong responses to all the items in some set $A$. Is it practically certain that this student will also fail item~$q$? Assume that the conditions are ideal in the sense that errors and lucky guesses are excluded.}
\end{enumerate}
 
Such a \indexedterm{query} is summarized as the pair $(A,q)$, with $\es \neq A\subset Q$ and $q\in Q$.  The \query{} routine gradually builds the relation $\PPP$ from $2^Q$ to $Q$ consisting of all the queries $(A,q)$ which receive a positive response.  For $m = |Q|$, there are $m\left(2^{m-1}-1\right)$ useful queries (asking a query $(A,q)$ with $q\in A$ is useless: the answer must be affirmative).  So their number is superexponential in $m$.  Fortunately, in most cases, only a small fraction of them need to be asked: as we explain below, the responses to some queries may be inferred from the response to previously asked queries. The importance of the queries regarding the problem at hand---building a knowledge space or a learning space---lies in the following theorem \authindex{Koppen, M.}\authindex{Doignon, J.-P.}\citep{koppen_doignon_1990}.

\begin{theorem}\label{thm_K_D}
\sl For a finite domain $Q$, there is a one-to-one correspondence between the collection of knowledge spaces $\KKK$ on $Q$ and the set of relations $\PPP$ from $2^Q\setminus\{\es\}$ to $Q$ satisfying for all $q$ in $Q$ and $A$, $B$ in $2^Q\setminus\{\es\}$:
\begin{enumerate}[\quad\rm(i)]
\item if $q\in A$, then $A \PPP q$;
\item if $A \PPP b$ for all $b$ in $B$ and $B \PPP q$, then $A \PPP q$.
\end{enumerate}
One such correspondence is defined by the two equivalences, where $r\in Q$, $K$, $B\subseteq Q$,  
\begin{align}\label{eq_entailment1}
K \in \KKK &\EQ \big(\forall (A,q)\in \PPP:\, A \cap K = \es \implies q \notin K\big),\\
\label{eq_entailment2}
(B,r)\in \PPP &\EQ \big(\forall K\in\KKK:\, B \cap K = \es \implies r \notin K\big).
\end{align}
\end{theorem}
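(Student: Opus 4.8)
The plan is to recognize the two equivalences as the polarities of an antitone Galois connection and then identify the two families of Galois-closed objects. Concretely, I would introduce the orthogonality relation between a pair $(A,q)$, with $\es\neq A\subseteq Q$ and $q\in Q$, and a subset $K\subseteq Q$, declaring $(A,q)$ and $K$ \emph{compatible} when $A\cap K=\es$ implies $q\notin K$. The two maps are then $\PPP\mapsto\KKK(\PPP)=\{K\subseteq Q\st (A,q)\text{ compatible with }K\text{ for all }(A,q)\in\PPP\}$ and $\KKK\mapsto\PPP(\KKK)=\{(B,r)\st (B,r)\text{ compatible with }K\text{ for all }K\in\KKK\}$; these are exactly the two directions of the stated equivalences. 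Being the polarities of a relation, each map is inclusion-reversing and the two composites are closure operators, so it suffices to check (a) that $\KKK(\PPP)$ is always a knowledge space and $\PPP(\KKK)$ always satisfies (i)--(ii), and (b) that each composite is the identity on the relevant family.

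First I would dispatch the easy half. For any relation $\PPP$ the collection $\KKK(\PPP)$ contains $\es$ (every pair is compatible with $\es$, as $q\notin\es$) and $Q$ (for $(A,q)\in\PPP$ one has $A\neq\es$, so $A\cap Q\neq\es$ and the implication is vacuous); it is $\cup$-closed because if $A\cap\bigcup_iK_i=\es$ then $A\cap K_i=\es$ for each $i$, forcing $q\notin K_i$ and hence $q\notin\bigcup_iK_i$. Thus $\KKK(\PPP)$ is a knowledge space. Dually, for any collection $\KKK$ the relation $\PPP(\KKK)$ satisfies (i) (if $q\in A$ and $A\cap K=\es$ then trivially $q\notin K$) and (ii) (if $A\cap K=\es$ then each $b\in B$ gives $b\notin K$, i.e.\ $B\cap K=\es$, whence $q\notin K$). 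None of this uses the hypotheses on $\PPP$ or $\KKK$, as expected for polarities.

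The substance is showing the composites are identities. For a knowledge space $\KKK$ the inclusion $\KKK\subseteq\KKK(\PPP(\KKK))$ is the generic extensivity of a polarity; for the reverse inclusion I would take $K\in\KKK(\PPP(\KKK))$ and, assuming $K\neq Q$, form $\hat K=\bigcup\{K'\in\KKK\st K'\subseteq K\}$, which lies in $\KKK$ by $\cup$-closure (the collection is nonempty since $\es\in\KKK$). If some $q\in K\setminus\hat K$ existed, then with $A=Q\setminus K\neq\es$ one checks $(A,q)\in\PPP(\KKK)$ --- any state $K'$ disjoint from $A$ is contained in $K$, hence in $\hat K$, hence omits $q$ --- and compatibility of this pair with $K$ forces $q\notin K$, a contradiction; so $\hat K=K\in\KKK$. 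Symmetrically, for a relation $\PPP$ obeying (i)--(ii) the inclusion $\PPP\subseteq\PPP(\KKK(\PPP))$ is automatic, and for the reverse I would argue by contraposition: given $(B,r)\notin\PPP$, set $K=\{q\in Q\st (B,q)\notin\PPP\}$, so that $r\in K$ while $K\cap B=\es$ by (i). The crucial point is that $K\in\KKK(\PPP)$, which is exactly where transitivity (ii) is used: if $(A,p)\in\PPP$ with $p\in K$ but $A\cap K=\es$, then $B\PPP a$ for every $a\in A$ together with $A\PPP p$ would yield $B\PPP p$ by (ii), contradicting $p\in K$. This state witnesses $(B,r)\notin\PPP(\KKK(\PPP))$, completing the equality.

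With (a) and (b) in hand the two maps restrict to mutually inverse bijections between knowledge spaces and relations satisfying (i)--(ii), which is the claimed correspondence. I expect the only genuinely delicate step to be the reverse inclusion $\PPP(\KKK(\PPP))\subseteq\PPP$: choosing the witnessing state $K=\{q\st(B,q)\notin\PPP\}$ and seeing that its membership in $\KKK(\PPP)$ is precisely an instance of the entailment axiom (ii) is the heart of the argument, whereas everything on the knowledge-space side reduces to $\cup$-closure and the presence of $\es$ and $Q$.
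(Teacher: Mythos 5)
Your proof is correct, and it follows essentially the route of the source the paper cites for this result: the paper itself states Theorem~\ref{thm_K_D} without proof, attributing it to Koppen and Doignon (1990), where the correspondence is likewise obtained as the Galois connection (polarity) induced by the compatibility relation, with the two closure conditions verified exactly as you do. In particular, your two nontrivial steps check out: the witness $\hat K=\bigcup\{K'\in\KKK \st K'\subseteq K\}$ together with the pair $(Q\setminus K,\,q)$ handles $\KKK(\PPP(\KKK))\subseteq\KKK$ (with the case $K=Q$ rightly set aside), and the state $K=\{q\in Q \st (B,q)\notin\PPP\}$, whose membership in $\KKK(\PPP)$ is precisely an application of condition (ii), handles $\PPP(\KKK(\PPP))\subseteq\PPP$.
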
  

In view of Formula~\eqref{eq_entailment1}, a positive answer to the query $(A,q)$ may lead to remove some subsets from the collection of potential states (namely, all the subsets $K$ satisfying both $A \cap K = \es$ and $q \in K$).  Negating both sides of Formula~\eqref{eq_entailment1} gives us the equivalent formula
\begin{equation}\label{eq_entailment1_minus}
K \notin \KKK\EQ \big(\exists (A,q)\in \PPP: \; A \cap K = \es \text{ and } q \in K\big),
\end{equation} 
which clearly shows that any non-state $K$ is ruled out by a positive answer to some query.  Thus Theorem~\ref{thm_K_D} is instrumental for building a knowledge space---which may or may not be a learning space. We outline below (starting on page~\pageref{page_adapted}) how we can adapt the implementation of Theorem~\ref{thm_K_D} so that 
learning spaces are generated. 

In the \query{} procedure, Block~$j$ consists of all the queries $(A,q)$ with $j=|A|$.  The procedure starts with Block~1, followed by Block~2, etc.
In principle an expert teacher is able to provide the answer to any query in Block~1, because such a query takes the following simple form.  

\begin{enumerate}[{[Q1]}]
\item \textsl{Suppose that a student under examination has just provided a false response to question $q$. Is it practically certain that this student will also fail item~$r$? Assume that the conditions are ideal in the sense that errors and lucky guesses are excluded.}
\end{enumerate}
Collecting the responses to such queries $(\{r\},q)$, or $(r,q)$ for short, with $r$, $q$ in $Q$ and $r \neq q$ amounts to constructing a relation $R$ on the set $Q$.  But not all such queries need to be asked. Assuming that the expert is consistent, we can infer the responses of some queries from other queries. For instance if the responses to the queries $(r,t)$ and $(t,q)$ are positive, then the response to $(r,q)$ should also be positive.  In general, we only need to ask the expert a relatively small set of queries of the type [Q1].  Their responses yield some relation~$R$. The  \indexedterm{transitive closure}\footnote{That is, the smallest transitive relation on $Q$ that includes $R$.} $t(R)$ of the relation $R$ is a quasi order on~$Q$.  By Birkhoff's Theorem~\ref{Birkhoff} (page \pageref{Birkhoff}), \authindex{Birkhoff, G.} the quasi order $t(R)$ uniquely defines a quasi ordinal knowledge space on $Q$, that is, a knowledge structure closed under union and intersection.  The design of the \query{} algorithm ensures that this quasi order is in fact a partial order.
\label{page_45}  Thus the quasi ordinal structure is an ordinal space $\LLL_1$, \label{L_1} namely, a learning space closed under intersection.  The learning space $\LLL_1$ contains 
all the actual knowledge states of the learning space under construction.  However, it also typically contains a possibly very large number of false states, which are due to the closure of intersection of $\LLL^1$.

While human experts are capable of providing useful responses to queries of the type [Q1], their responses to queries of higher blocks are less reliable.  Fortunately, despite the presence of false states, the learning space $\LLL_1$ is sufficiently informative to be used in the schools and colleges\footnote{The likely reason is that any false state $L$ may be close to some true state $K$ (in the sense of the symmetric difference distance $|K\bigtriangleup L| = |(L\setminus K) \cup (K\setminus L)|$).}. The data collected by assessments using $\LLL_1$ can then be used to simulate human expert responses to queries of higher block numbers, such as Block~2 or Block~3.

Here is how, taking Block~2 as our example.   Assuming that we have a very large collection of assessment data\footnote{Which is the typical case in the \subjindex{\aleks}\aleks{} system for example.}, we can estimate the conditional probabilities that failing both items~$r$ and $t$ entails failing also item~$q$.
Using a convenient abuse of notation, we write for these conditional probabilities\footnote{Technically, $q$, $r$, and $t$ denote items and not random variables}:
\begin{equation}
\Prob(q=0 \,\st\, r=t=0).
\end{equation}
Choosing a suitably large threshold value $\theta$, with  $0<\theta < 1$, we can construct the relation $\PPP$ for Block 2 by the rule:  
\begin{equation}\label{prob_query}   
\{r,t\}\PPP q   \qquad\text{exactly when}\qquad 
\Prob(q=0 \,\st\, r=t=0) > \theta.
\end{equation}
Using such estimates and Formula~\eqref{eq_entailment1_minus} help implementing Block~2 of the \query\ routine.  However, the resulting knowledge space is not necessarily a learning space.  Consequently, not all positive responses $\{r,t\}\PPP q$ should be implemented and result in the elimination of states.  This remark also applies in the general case, to the queries of any block.

\label{page_adapted}
The \query\ routine has been adapted for the construction of learning spaces.  As suggested by our  example of Block~2,  the general idea is to verify that the removal of a state by the application of Theorem~\ref{thm_K_D} would not result in a violation of the learning space axioms.  The key result is Theorem~16.1.6 in \authindex{Doignon, J.-P.}\authindex{Falmagne, J.-Cl.}\citet{Falmagne_Doignon_LS}, which is restated as Theorem \ref{16_1_6} here. It relies on the concept of a `hanging state'.

\begin{definition}\label{criticalhanging} 
A nonempty state $K$ in a knowledge structure $(Q,\KKK)$ is \subjindex{hanging (state)}\term{hanging} if its inner fringe $K^\III=\{q\in Q \,\st\, K\setminus \{q\}\in \KKK\}$ (cf.~Definition~\ref{fringe_def}) is empty.
The state $K$ is \subjindex{almost hanging (state)}\term{almost hanging} if it contains more than one item, but its inner fringe consists of a single item.
\end{definition}

\begin{example}\label{ex_hanging}
The knowledge space
$$
\LLL=\{\es,\{a\},\{b\},\{a,b\},\{a,c\},\{a,d\},\{a,b,c\},\{a,b,d\},\{a,c,d\},Q\},
$$
depicted in Figure~\ref{ex_hanging}
has no hanging states, and has two almost hanging states, which are $\{a,c\}$ and $\{a,d\}$. According to Theorem~\ref{thm_no_hanging} below, $\LLL$ must be a learning space. 
\hfill\EEX\end{example}

\begin{figure}[ht]
\begin{center}
\begin{tikzpicture}[yscale=1.2,baseline=60pt,every node/.style={rectangle, draw, rounded corners}]
\node[label= $\LLL$,shape=coordinate] at (-2.8,3.8) {};
\node (es)  at (0,0) {$\es$};
\node (a)   at (-1,1) {$\{a\}$};
\node (b)   at ( 1,1) {$\{b\}$};
\node (ad)  at (-2,2) {$\{a,d\}$};
\node (ab)  at ( 0,2) {$\{a,b\}$};
\node (ac)  at ( 2,2) {$\{a,c\}$};
\node (abd) at (-2,3) {$\{a,b,d\}$};
\node (acd) at ( 0,3) {$\{a,c,d\}$};
\node (abc) at ( 2,3) {$\{a,b,c\}$};
\node (Q)   at ( 0,4) {$Q$};
\draw (es) -- (a) -- (ad) -- (abd) -- (Q);
\draw (es) -- (b) -- (ab) -- (abc) -- (Q);
\draw (a) -- (ab) -- (abd);
\draw (a) -- (ac) -- (abc);
\draw (ad) -- (acd) -- (Q);
\draw (ac) -- (acd);
\end{tikzpicture}
\end{center}
\caption[A learning space with two hanging states]{The covering diagram of the knowledge space in Example~\ref{ex_hanging}.}
\label{fig_hanging}
\end{figure}
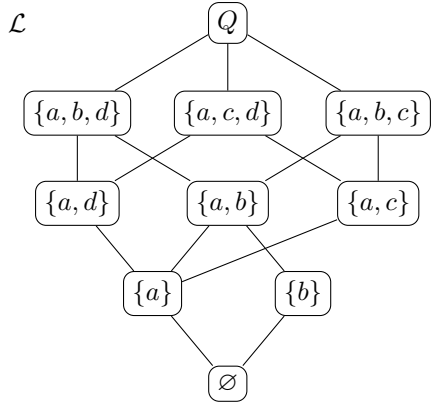

The following result is straightforward.

\begin{theorem}\label{thm_no_hanging}
A finite knowledge space is a learning space if and only if it has no hanging state.
\end{theorem}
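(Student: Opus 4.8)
The plan is to reduce the statement to Theorem~\ref{pro_learning_spaces}, whose downgradability condition~(b) is, after unwinding a definition, exactly the absence of hanging states. First I would make the bookkeeping observation that, by Definition~\ref{criticalhanging} together with Definition~\ref{fringe_def}, a nonempty state $K$ is hanging precisely when its inner fringe $K^\III=\{q\in K\st K\setminus\{q\}\in\KKK\}$ is empty, i.e. precisely when $K\setminus\{q\}\notin\KKK$ for every $q\in K$. Consequently, $\KKK$ has no hanging state if and only if every nonempty state $K$ contains at least one item $q$ with $K\setminus\{q\}\in\KKK$, which is verbatim the downgradability of $\KKK$ (condition~(b) of Theorem~\ref{pro_learning_spaces}). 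So the content of the theorem is that, for a finite knowledge space, being a learning space is equivalent to being downgradable.

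Next I would note that, because $\KKK$ is already assumed to be a finite knowledge space, two of the three conditions of Theorem~\ref{pro_learning_spaces} are granted for free, leaving only~(b) to decide the matter. Condition~(a), finiteness of $Q$, holds by hypothesis. Condition~(c) holds automatically from closure under union: for any state $K$ and items $q,r$ with $K\cup\{q\},\,K\cup\{r\}\in\KKK$, the identity $K\cup\{q,r\}=(K\cup\{q\})\cup(K\cup\{r\})$ exhibits $K\cup\{q,r\}$ as the union of two states, hence as a state. Thus, for a finite knowledge space, the triple of conditions (a),(b),(c) collapses to condition~(b) alone.

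Combining the two observations gives the chain: a finite knowledge space is a learning space $\EQ$ it satisfies (a),(b),(c) (by Theorem~\ref{pro_learning_spaces}) $\EQ$ it satisfies (b) (since (a) and (c) are automatic) $\EQ$ it is downgradable $\EQ$ it has no hanging state. The main---and essentially the only---point to get right is the automatic validity of condition~(c): it is $\cup$-closure that supplies the join-compatibility the learning-space axioms would otherwise have to demand, so that downgradability becomes the sole remaining hurdle. Everything else is a matter of reading off the definition of the inner fringe, which is exactly why the result is ``straightforward.'' (As a cross-check one could instead invoke Theorem~\ref{big_equivalence}, (i)$\EQ$(ii): a learning space is the same as an antimatroid, i.e. a downgradable finite knowledge space, recovering the same equivalence.)
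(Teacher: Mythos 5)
Your proof is correct. The paper in fact states this theorem without proof, merely calling it straightforward, and your argument --- translating ``no hanging state'' into downgradability (condition~(b) of Theorem~\ref{pro_learning_spaces}) and observing that conditions~(a) and (c) are automatic for a finite knowledge space, with the alternative route via Theorem~\ref{big_equivalence} and the definition of an antimatroid --- is exactly the intended straightforward unwinding.
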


Theorem~\ref{thm_no_hanging} indicates that, in the application of the \query\ routine, we must avoid the creation of hanging states (which could result from the removal of states by a positive response to a query).  We need one more tool to analyze the effect of such a positive response.

\begin{definition}\label{DKKK}
Let  $(Q,\KKK)$ be a knowledge space and let $(A,q)$ be any query such that $\es\neq A\subset Q$ and $q\in Q\setminus A$.  For any subfamily $\FFF$ of $\KKK$, we define 
\begin{equation}\label{DFAq}
\DDD_\FFF(A,q) = \{K\in\FFF \;\st\; A \cap K = \es \text{ and } q \in K\}.
\end{equation}
\end{definition}

Thus, $\DDD_\KKK(A,q)$ is the subfamily of all those states of $\KKK$ that would be removed by a positive response $A\PPP q$ to the query $(A,q)$ in the framework of the {\tt QUERY} routine.

\begin{theorem}\label{16_1_6} 
\sl For any knowledge space $\KKK$ and any query $(A,q)$, the family of sets $\KKK\setminus\DDD_\KKK(A,q)$ is a knowledge space. 
If $\KKK$ is a learning space, then $\KKK\setminus\DDD_\KKK(A,q)$ is a learning space if and only if there is no almost hanging state $L$ in $\KKK$ such that $A \cap L = L^\III$ and $q \in L$.
\end{theorem}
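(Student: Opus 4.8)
The plan is to prove the two assertions separately, using the characterization of learning spaces as finite knowledge spaces without hanging states (Theorem~\ref{thm_no_hanging}) for the second one.

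\textbf{First assertion.} To show that $\KKK' := \KKK\setminus\DDD_\KKK(A,q)$ is a knowledge space, I would first check $\es,Q\in\KKK'$: since every member of $\DDD_\KKK(A,q)$ must contain $q$, the empty set is never removed; and since $\es\neq A\subseteq Q$ forces $A\cap Q=A\neq\es$, the set $Q$ is not removed either. For closure under union, the key observation is that membership of a union in $\DDD_\KKK(A,q)$ is already detectable on a single component: if $\CCC\subseteq\KKK'$ and $\bigcup\CCC\in\DDD_\KKK(A,q)$, then $A\cap(\bigcup\CCC)=\es$ forces $A\cap K=\es$ for every $K\in\CCC$, while $q\in\bigcup\CCC$ puts $q$ in some $K\in\CCC$; that $K$ would then lie in $\DDD_\KKK(A,q)$, contradicting $\CCC\subseteq\KKK'$. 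Hence $\bigcup\CCC\in\KKK$ (as $\KKK$ is $\cup$-closed) but $\bigcup\CCC\notin\DDD_\KKK(A,q)$, so $\bigcup\CCC\in\KKK'$.

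\textbf{Second assertion.} Assume $\KKK$ is a learning space; then $\KKK$ is finite and has no hanging state, and $\KKK'\subseteq\KKK$ is finite as well. By Theorem~\ref{thm_no_hanging} it suffices to prove the equivalence: $\KKK'$ contains a hanging state if and only if $\KKK$ contains an almost hanging state $L$ with $A\cap L=L^\III$ and $q\in L$. Throughout, inner fringes are taken in $\KKK$; the point I would record first is that a nonempty $M\in\KKK'$ is hanging in $\KKK'$ exactly when, for every $m$ in its inner fringe $M^\III$ (which is nonempty because $\KKK$ is a learning space), the state $M\setminus\{m\}$ has been removed, i.e.\ $M\setminus\{m\}\in\DDD_\KKK(A,q)$, since for $m\notin M^\III$ the set $M\setminus\{m\}$ already fails to be in $\KKK$.

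For the forward direction I would take a hanging $M\in\KKK'$ and unpack $M\setminus\{m\}\in\DDD_\KKK(A,q)$ for each $m\in M^\III$, namely $A\cap(M\setminus\{m\})=\es$ and $q\in M\setminus\{m\}$. The second condition gives $q\in M$ with $q\neq m$ for all such $m$; the first gives $A\cap M\subseteq\{m\}$. If $M^\III$ had two distinct members $m_1,m_2$, then $A\cap M\subseteq\{m_1\}\cap\{m_2\}=\es$, whence $A\cap M=\es$, and together with $q\in M$ this would force $M\in\DDD_\KKK(A,q)$, contradicting $M\in\KKK'$. So $M^\III=\{m_0\}$ is a singleton; and $A\cap M=\es$ being again excluded, $A\cap M=\{m_0\}=M^\III$. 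Since $m_0\in M$ and $q\in M$ with $q\neq m_0$, $M$ has at least two items, so $M$ is the desired almost hanging state. For the converse I would take such an almost hanging $L$, with $L^\III=\{m_0\}$ and $A\cap L=\{m_0\}$, and verify $L$ is hanging in $\KKK'$: first $L\in\KKK'$ because $A\cap L=\{m_0\}\neq\es$ keeps $L$ out of $\DDD_\KKK(A,q)$; next $L\setminus\{m_0\}\in\KKK$, and $A\cap(L\setminus\{m_0\})=(A\cap L)\setminus\{m_0\}=\es$ while $q\in L\setminus\{m_0\}$---here I use $q\in Q\setminus A$ from Definition~\ref{DKKK}, so that $q\neq m_0$ since $m_0\in A$---hence $L\setminus\{m_0\}$ is removed, and as $m_0$ is the only inner-fringe item of $L$, the inner fringe of $L$ in $\KKK'$ becomes empty.

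I expect the main obstacle to be the forward direction's case analysis: one must rule out $A\cap M=\es$ (which would reabsorb $M$ into $\DDD_\KKK(A,q)$) and show that the inner fringe collapses to a single item, so that the created hanging state is forced to be \emph{almost} hanging with the sharp relation $A\cap M=M^\III$, rather than merely hanging with a larger fringe. The hypothesis $q\notin A$ is the small but essential ingredient that makes the converse go through.
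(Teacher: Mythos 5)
Your proof is correct and complete. Note first that this chapter does not actually prove the statement: it restates Theorem~16.1.6 of \citet{Falmagne_Doignon_LS}, so there is no in-paper argument to compare yours against; but the route you take---reducing via Theorem~\ref{thm_no_hanging} to the question of whether the removal $\DDD_\KKK(A,q)$ creates a hanging state---is exactly the strategy the surrounding text signals as the intended one. Your verification of the first assertion is sound: $\es$ survives because every removed set contains $q$, $Q$ survives because $A\cap Q=A\neq\es$, and the union of surviving states survives because $A\cap\bigcup\CCC=\es$ together with $q\in\bigcup\CCC$ would localize to a single member of $\CCC$, which would then have been removed. For the second assertion, your key bookkeeping observation---that the inner fringe of $M$ in $\KKK'=\KKK\setminus\DDD_\KKK(A,q)$ equals $\{m\in M^\III \st M\setminus\{m\}\notin\DDD_\KKK(A,q)\}$, with $M^\III$ taken in $\KKK$ and nonempty by accessibility of the learning space $\KKK$---is the right pivot, and both directions are handled cleanly: in the forward direction you correctly rule out $|M^\III|\ge 2$ and $A\cap M=\es$ (either would put $M$ itself in $\DDD_\KKK(A,q)$), forcing $M^\III=A\cap M=\{m_0\}$ and $|M|\ge 2$ via $q\in M\setminus\{m_0\}$; in the converse you correctly invoke $q\in Q\setminus A$ from Definition~\ref{DKKK} to get $q\neq m_0$, so that $L\setminus\{m_0\}$ is removed while $L$ itself is not, making $L$ hanging in $\KKK'$. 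One incidental point worth recording: Definition~\ref{criticalhanging} as printed writes the inner fringe with $q$ ranging over $Q$ rather than over $K$; you implicitly (and correctly) used the version of Definition~\ref{fringe_def}, with $q\in K$, which is the only reading under which ``hanging'' is meaningful.
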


The \index{adapted \query{} routine}\term{adapted} \query{} routine for building a learning space is based on Theorem~\ref{16_1_6}.  As we described above (page~\pageref{L_1}), we obtain a learning space (actually an ordinal space) $\LLL_1$ at the end of Block~1. The next query from Block~2 is of the form $(\{r,t\},q)$.  A positive response $\{r,t\}\PPP q$ induced from the estimated conditional probabilities of Statement~\eqref{prob_query} would lead to the removal from $\LLL_1$ of any state $K$ such that
\begin{equation}
\{r,t\}\cap K \;=\; \es\quad\text{and}\quad q\in K,
\end{equation}
provided $\LLL_1\setminus \{K\}$ does not contain any hanging state (cf.\ Formula~\eqref{eq_entailment1_minus}).  But there is one more subtlety. The fact that $\LLL_1\setminus \{K\}$  contains some hanging state does not necessary mean that the query  $(\{r,t\},q)$ must be discarded for ever.  Indeed, it may be that, at a later stage of the execution of the algorithm, after the removal of some other states, implementing $\{r,t\} \PPP q$ no longer creates hanging states.  The adapted \query{} routine takes care of such details
\authindex{Doignon, J.-P.}\authindex{Falmagne, J.-Cl.}\citep[see][Chapter 15 and 16]{Falmagne_Doignon_LS}. 

So far, only Block~2 of the adapted \query\ routine has been implemented to built the learning space $\LLL_2$ in realistic situations involving several hundred items, such as Beginning Algebra as the subject is taught in the US.  Remarkably, in these cases, it was observed that $99\%$ of the false states in $\LLL_1$ are removed to form $\LLL_2$.  It is highly plausible that even better, smaller learning spaces would result from implementing Block~3 to $\LLL_2$, and maybe even higher blocks later on.  We have been told by the \subjindex{\aleks}\aleks{} team\footnote{Personnal communication.} that this costly enterprise is still at the project stage.

\medskip

\label{page_2014}
\authindex{Doignon, J.-P.}\cite{Doignon_2014} recently proposed another modification of the \query{} routine to build a learning space.  He uses a fundamental property of the collection of all the knowledge structures on the domain $Q$, a property which follows from results in \authindex{Caspard, N.}\authindex{Monjardet, B.}\cite{caspard_monjardet_2004}.  Here, a knowledge structure $\KKK$ on the domain $Q$ is included in a knowledge structure $\LLL$ also on $Q$ when any state in $\KKK$ is also a state in $\LLL$ (that is, $\KKK \subseteq \LLL$).

\begin{theorem}\label{theo_largest_learning_space}
For a given finite knowledge space $(Q,\KKK)$, there are two cases.  Either there is no learning space on $Q$ included in $\KKK$, or among all the learning spaces on $Q$ included in $\KKK$, there is one which includes all the other ones.
\end{theorem}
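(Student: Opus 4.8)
The plan is to reduce the statement to a single closure property: that the family of all learning spaces contained in $\KKK$ is closed under the join operation of the lattice of $\cup$-closed families, so that, by finiteness, the join of all its members is again a learning space inside $\KKK$ and hence the desired largest one. Concretely, for two learning spaces $\LLL_1,\LLL_2\subseteq\KKK$ I would form $\MMM=\Span(\LLL_1\cup\LLL_2)$. A first routine step is to check that $\MMM=\{A\cup B\st A\in\LLL_1,\,B\in\LLL_2\}$: grouping the $\LLL_1$-members and the $\LLL_2$-members of any union and using that each of $\LLL_1,\LLL_2$ is $\cup$-closed gives exactly this description. From it, $\MMM$ is visibly a finite knowledge space (since $\es=\es\cup\es$, $Q=Q\cup Q$, and $(A_1\cup B_1)\cup(A_2\cup B_2)=(A_1\cup A_2)\cup(B_1\cup B_2)$), and $\MMM\subseteq\KKK$ because $\KKK$ is $\cup$-closed and contains both $\LLL_1$ and $\LLL_2$.

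The heart of the argument---and the step I expect to be the main obstacle---is to prove that $\MMM$ is itself a learning space. By Theorem~\ref{thm_no_hanging} it suffices to show that $\MMM$ has no hanging state, i.e.\ that every nonempty $M\in\MMM$ is accessible. I would establish this directly by exhibiting a one-item-at-a-time chain from $\es$ up to $M$. Writing $M=A\cup B$ with $A\in\LLL_1$ and $B\in\LLL_2$, Learning Smoothness in $\LLL_1$ yields a tight chain $\es=A_0\subset A_1\subset\cdots\subset A_k=A$, and Learning Smoothness in $\LLL_2$ a tight chain $\es=B_0\subset\cdots\subset B_\ell=B$, each increment of size one. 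Concatenating them gives the sequence $A_0,A_1,\dots,A_k=A,\ A\cup B_1,A\cup B_2,\dots,A\cup B_\ell=M$, every term of which lies in $\MMM$. Consecutive terms differ by at most one item (in the second half the new element $B_j\setminus B_{j-1}$ may already lie in $A$, producing a repetition); after deleting repetitions this is a chain in $\MMM$ from $\es$ to $M$ with single-item increments. Its penultimate term is a state $M\setminus\{q\}\in\MMM$, so $M$ is accessible. Hence $\MMM$ has no hanging state and is a learning space contained in $\KKK$.

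With this lemma in hand the theorem follows quickly. If no learning space is contained in $\KKK$ we are in the first case. Otherwise, since $\KKK$ is finite there are only finitely many subfamilies of $\KKK$, so the collection of learning spaces contained in $\KKK$ is a finite nonempty set closed under the binary join $\vee$ (by the lemma, as $\Span(\LLL_1\cup\LLL_2)=\MMM$). By induction it is then closed under finite joins, so $\LLL^{*}:=\Span\big(\bigcup\{\LLL\st\LLL\text{ a learning space},\ \LLL\subseteq\KKK\}\big)$ is again a learning space contained in $\KKK$; and by construction every learning space $\LLL\subseteq\KKK$ satisfies $\LLL\subseteq\LLL^{*}$. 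Thus $\LLL^{*}$ includes all the others, which is the second case. The only delicate point to write out carefully is the accessibility chain above; everything else is bookkeeping with $\cup$-closure and finiteness. The lattice-theoretic results of Caspard and Monjardet supply the ambient fact that these $\cup$-closed families form a lattice, but the explicit join formula $\MMM=\{A\cup B\}$ makes the verification elementary.
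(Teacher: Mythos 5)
Your proof is correct, but it takes a genuinely different route from the paper. The paper offers no self-contained argument: it attributes Theorem~\ref{theo_largest_learning_space} to lattice-theoretic results of Caspard and Monjardet (via Doignon, 2014), and its companion Theorem~\ref{theo_KKK_m} characterizes the largest learning space $\KKK^\triangle$ constructively, as the set of all states lying on gradations of $\KKK$. What you prove instead is a join-closure lemma: for learning spaces $\LLL_1,\LLL_2\subseteq\KKK$, the family $\Span(\LLL_1\cup\LLL_2)=\{A\cup B \st A\in\LLL_1,\ B\in\LLL_2\}$ is again a learning space included in $\KKK$. Your concatenated-chain argument ($\es=A_0\subset\cdots\subset A_k=A$ followed by $A\cup B_1\subseteq\cdots\subseteq A\cup B_\ell=A\cup B$, with repetitions deleted) correctly establishes accessibility of every member, so Theorem~\ref{thm_no_hanging} applies since the family is a finite knowledge space containing $\es$ and $Q$; finiteness of $\KKK$ then yields the maximum as the join of all learning spaces below $\KKK$. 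This is in essence the classical fact that antimatroids on a fixed ground set are closed under join, specialized to the interval below $\KKK$. The trade-off is clear: your argument is elementary and uses only results stated in the chapter, whereas the paper's gradation description buys an explicit, algorithmically testable handle on $\KKK^\triangle$ --- the adjusted \query{} routine decides whether $\KKK$ contains any learning space by searching for a single gradation and recovers $\KKK^\triangle$ as the union of all gradations --- information which your characterization of the maximum as $\Span\bigl(\bigcup\{\LLL \st \LLL\subseteq\KKK,\ \LLL \text{ a learning space}\}\bigr)$ does not directly provide.
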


When the second case in Theorem~\ref{theo_largest_learning_space} occurs, we denote by $\KKK^\triangle$ the largest learning space included in the knowledge space $\KKK$.  
Theorem~\ref{theo_KKK_m} below provides a description of $\KKK^\triangle$ in terms of `gradations' (Theorems~\ref{theo_largest_learning_space} and \ref{theo_KKK_m} first appeared  in \citealp{Doignon_2014}).

\begin{definition}\label{gradation}
Let $(Q,\KKK)$ be a finite knowledge structure.  A \indexedterm{learning path} in $\KKK$ is a maximal chain of states.  A \indexedterm{gradation} $\CCC$ is a learning path which is accessible (Definition~\ref{space_wg_downgrad}), that is:  for any nonempty state $K$ in $\CCC$ there is some item $q$ in $K$ for which  $K\setminus\{q\}\in\KKK$.
\end{definition}

Notice that a learning path necessarily contains both states $\es$ and $Q$.  A gradation always consists of $1+|Q|$ states of respective sizes $0$, $1$, \dots, $|Q|$, and it forms itself a learning space on $Q$. There is an obvious correspondence between gradations and learning strings (Definition~\ref{def_learning_string}).


\begin{theorem}\label{theo_KKK_m}
Let $\KKK$ be a knowledge space on the finite domain $Q$.  If $\KKK$ includes at least one learning space on $Q$, then the largest learning space $\KKK^\triangle$ on $Q$ included in $\KKK$ is formed by all the states in all gradations in $\KKK$.
\end{theorem}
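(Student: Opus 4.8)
The plan is to identify the set $\GGG$ of all states of $\KKK$ that lie on some gradation of $\KKK$ with the largest learning space $\KKK^\triangle$, which exists by hypothesis and Theorem~\ref{theo_largest_learning_space}, by establishing the two inclusions $\GGG\subseteq\KKK^\triangle$ and $\KKK^\triangle\subseteq\GGG$. Throughout I would lean on the facts recorded just after Definition~\ref{gradation}: a gradation is a chain $\es=C_0\subset C_1\subset\cdots\subset C_{|Q|}=Q$ with $|C_i\setminus C_{i-1}|=1$ for every $i$, and it is itself a learning space on $Q$; conversely, any chain from $\es$ to $Q$ whose successive terms differ by a single item is automatically a \emph{maximal} chain, since no state can be inserted between two sets whose sizes are consecutive, and it is accessible, hence a gradation.

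For $\GGG\subseteq\KKK^\triangle$ I would fix an arbitrary gradation $\CCC$ of $\KKK$. Its terms are states of $\KKK$, so $\CCC\subseteq\KKK$, and by the remark above $\CCC$ is a learning space. Since Theorem~\ref{theo_largest_learning_space} tells us that $\KKK^\triangle$ is the learning space included in $\KKK$ which contains every other learning space included in $\KKK$, we get $\CCC\subseteq\KKK^\triangle$. Taking the union over all gradations of $\KKK$ then yields $\GGG\subseteq\KKK^\triangle$.

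For the reverse inclusion $\KKK^\triangle\subseteq\GGG$ I would take an arbitrary state $K\in\KKK^\triangle$. Because $\KKK^\triangle$ is a learning space, Learning Smoothness~[L1] applied to $\es\subset K$ gives a chain $\es=A_0\subset\cdots\subset A_s=K$ inside $\KKK^\triangle$ with single-item increments, and applied to $K\subset Q$ gives a chain $K=B_0\subset\cdots\subset B_t=Q$ inside $\KKK^\triangle$, again with single-item increments. Concatenating them produces a chain from $\es$ to $Q$ all of whose increments have size one and all of whose terms lie in $\KKK^\triangle\subseteq\KKK$. By the remark this chain is a gradation of $\KKK$, and it passes through $K$; hence $K\in\GGG$. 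Combining the two inclusions gives $\GGG=\KKK^\triangle$, as claimed.

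The step I expect to be the crux is this reverse inclusion, precisely the observation that a unit-increment chain \emph{built inside the smaller structure} $\KKK^\triangle$ is still a maximal accessible chain in the larger structure $\KKK$, i.e. a gradation of $\KKK$ and not merely of $\KKK^\triangle$. This is what rescues the argument: maximality of a chain with consecutive sizes is an ambient-independent property, so enlarging $\KKK^\triangle$ to $\KKK$ cannot produce a new state to squeeze into the chain. The only point needing care beforehand is the standing fact that every gradation is a learning space with unit increments, which I would take from the discussion following Definition~\ref{gradation}.
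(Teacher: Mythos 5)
Your proof is correct; there is, however, nothing in the chapter to compare it against, because Theorem~\ref{theo_KKK_m} is stated there without proof (both it and Theorem~\ref{theo_largest_learning_space} are credited to \citealp{Doignon_2014}). Taking the chapter's logical order as given, your route is the natural and economical one. Writing $\GGG$ for the set of states lying on some gradation, as you do: the inclusion $\GGG\subseteq\KKK^\triangle$ holds because each gradation, being a unit-increment chain from $\es$ to $Q$, is a learning space included in $\KKK$ and is therefore absorbed into $\KKK^\triangle$ by Theorem~\ref{theo_largest_learning_space}; the inclusion $\KKK^\triangle\subseteq\GGG$ follows by splicing two Learning Smoothness chains inside $\KKK^\triangle$ through the given state, and the observation you flag as the crux is exactly right: a chain realizing every cardinality $0,1,\dots,|Q|$ is maximal in \emph{any} family of subsets of $Q$ containing it, and its accessibility is intrinsic to the chain, so it is a gradation of $\KKK$ and not merely of $\KKK^\triangle$. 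Note that a self-contained proof---one that does not presuppose Theorem~\ref{theo_largest_learning_space}, as the original source cannot---must do more work: show directly that the union of all gradations is itself a learning space (say via Theorem~\ref{pro_learning_spaces}) which contains every learning space included in $\KKK$, thereby establishing both theorems at once. Your version trades that self-sufficiency for brevity, which is legitimate given that the chapter states Theorem~\ref{theo_largest_learning_space} as an independent prior result.

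One caveat about the remark following Definition~\ref{gradation} on which both of your preliminary facts rest. As literally printed, that definition requires $K\setminus\{q\}\in\KKK$, i.e., accessibility relative to the \emph{ambient} space; under that reading the remark (and indeed the theorem itself) would be false. For instance, in the knowledge space $\KKK=\{\es,\{a\},\{b\},\{a,b\},\{c,d\},\{a,b,c\},\{a,c,d\},\{b,c,d\},Q\}$ on $Q=\{a,b,c,d\}$, the maximal chain $\es\subset\{b\}\subset\{b,c,d\}\subset Q$ is ambient-accessible, yet $\{b,c,d\}$ belongs to no learning space included in $\KKK$: its only unit step down inside $\KKK$ is $\{c,d\}$, which itself has none. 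The reading that validates the remark---and the one your argument implicitly and correctly uses---is accessibility of $\CCC$ as a structure in its own right, $K\setminus\{q\}\in\CCC$, which is what Definition~\ref{space_wg_downgrad} applied to $\CCC$ actually prescribes. With that reading understood, your proof is complete as written.
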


As the classical \query{} routine, the \indexedterm{adjusted \query{} routine} proposed in \authindex{Doignon, J.-P.}\cite{Doignon_2014} repeatedly asks queries 
and maintains a collection $\LLL$ of subsets of $Q$ which, although it is decreasing, always forms a learning space on $Q$.  When the query $(A,q)$ prompts a positive answer from the expert (or the database), the routine builds the collection $\KKK=\LLL\setminus\DDD_\LLL(A,q)$.  By Theorem~\ref{16_1_6}, the collection $\KKK$ is again a knowledge space.  Now there are two cases:
\begin{enumerate}[\qquad(i)~]
\item $\KKK$ includes some learning space (in other words: there is at least one gradation in $\KKK$, cf.~Theorem~\ref{theo_KKK_m}). Then, in view of Theorem~\ref{theo_largest_learning_space}, $\KKK$ includes for sure the learning space $\KKK^\triangle$.
The routine then replaces $\LLL$ with the latter learning space and asks a new query, if any is left unanswered. 
\item $\KKK$ does not include any gradation.  Then the routine exits execution, delivering the actual collection $\LLL$.
\end{enumerate}

Killing the routine in Case~(ii) above makes sense, because there is no learning space that complies with the answers collected to queries.  To the contrary, when the query answers are coherent in the sense that they reflect some latent learning space, Case~(ii) does not occur. Even more, the routine uncovers the latent learning space.

\begin{theorem}\label{prop_adjusted_query_fundamental}
If $\LLL$ is a latent learning space on the finite domain $Q$ and the query answers are truthful with respect to $\LLL$, then the adjusted \query{} routine will ultimately uncover $\LLL$.
\end{theorem}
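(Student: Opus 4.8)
The plan is to track the collection maintained by the routine, which I denote $\MMM$ to distinguish it from the latent learning space $\LLL$ of the statement, and to prove two inclusions that together force $\MMM=\LLL$ at termination. Let $\PPP$ be the relation associated with the knowledge space $\LLL$ through the correspondence of Theorem~\ref{thm_K_D}, so that $\PPP$ is governed by Equation~\eqref{eq_entailment2}. That the answers are \emph{truthful with respect to $\LLL$} means precisely that a query $(A,q)$ receives a positive answer exactly when $(A,q)\in\PPP$. The routine starts from a learning space containing $\LLL$ (for instance $2^Q$, the largest learning space on $Q$), and I will show the \textbf{soundness invariant} $\LLL\subseteq\MMM$ survives every update.

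For the inductive step, suppose a query $(A,q)$ is answered positively, i.e.\ $(A,q)\in\PPP$, and the routine forms $\KKK=\MMM\setminus\DDD_\MMM(A,q)$, which is again a knowledge space by Theorem~\ref{16_1_6}. No state of $\LLL$ lies in $\DDD_\MMM(A,q)$: a state $K\in\LLL$ with $A\cap K=\es$ and $q\in K$ would contradict $(A,q)\in\PPP$ read through Equation~\eqref{eq_entailment2}. Hence $\LLL\subseteq\KKK$. Since $\LLL$ is a learning space contained in $\KKK$, the routine is in Case~(i), and by Theorem~\ref{theo_largest_learning_space} it replaces $\MMM$ by the largest learning space $\KKK^\triangle$ of $\KKK$; maximality gives $\LLL\subseteq\KKK^\triangle$, so the invariant persists, while a negative answer leaves $\MMM$ unchanged. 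In particular Case~(ii) never arises, so the routine never exits prematurely, and since $Q$ is finite there are finitely many queries; the routine thus terminates having resolved every query, ending with some $\MMM_{\mathrm{fin}}\supseteq\LLL$.

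For the reverse inclusion I would argue \textbf{completeness}: every non-state is eventually deleted. Take $K\notin\LLL$. Applying the contrapositive entailment Equation~\eqref{eq_entailment1_minus} to the knowledge space $\LLL$ yields a pair $(A,q)\in\PPP$ with $A\cap K=\es$ and $q\in K$; note $q\notin A$, so this is a genuine query, and by truthfulness it is answered positively. When it is processed, $K$, if still present, belongs to $\DDD_\MMM(A,q)$ and is removed. Since each update replaces $\MMM$ by $\KKK^\triangle\subseteq\KKK\subseteq\MMM$, the collection only ever shrinks, so $K$ is never reinstated and $K\notin\MMM_{\mathrm{fin}}$. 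Thus $\MMM_{\mathrm{fin}}\subseteq\LLL$, and combined with the invariant this gives $\MMM_{\mathrm{fin}}=\LLL$, which is the claim.

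The delicate point, and the step I expect to be the real obstacle, is the passage from $\KKK$ to $\KKK^\triangle$, because the routine does not keep $\KKK$ but its largest learning subspace. This single move must simultaneously (a)~drop no true state, which is exactly the maximality $\LLL\subseteq\KKK^\triangle$ furnished by Theorem~\ref{theo_largest_learning_space}, and (b)~only shrink the collection, so that the deletions performed for completeness are permanent; both hinge on the invariant $\LLL\subseteq\KKK$, which is also what guarantees $\KKK^\triangle$ exists (it places us in Case~(i), cf.\ Theorem~\ref{theo_KKK_m}). A secondary wrinkle is the treatment of positive answers that are \emph{inferred} rather than asked: here I would observe, using closure condition~(ii) of Theorem~\ref{thm_K_D}, that once the queries generating an entailment $(A,q)\in\PPP$ have been implemented, every $K\in\MMM$ with $A\cap K=\es$ already satisfies $q\notin K$, so an inferred query removes nothing new; restricting attention to explicitly asked queries therefore costs nothing in the completeness argument.
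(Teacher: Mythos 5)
Your proof is correct. Note that the paper states this theorem without giving a proof (it is imported from Doignon, 2014, and the text surrounding the statement only asserts that under truthful answers Case~(ii) never occurs), so there is no in-paper argument to compare against line by line; but your two-sided argument is the natural one and it is sound. Specifically: the invariant $\LLL\subseteq\MMM$, preserved through the correspondence of Theorem~\ref{thm_K_D}, Theorem~\ref{16_1_6}, and the maximality of $\KKK^\triangle$ from Theorem~\ref{theo_largest_learning_space}; the elimination of every non-state of $\LLL$ via Formula~\eqref{eq_entailment1_minus}; and the permanence of deletions because the maintained collection only shrinks, together force the final collection to equal $\LLL$. Your closing observation, that a positive answer inferred through condition~(ii) of Theorem~\ref{thm_K_D} from already implemented queries can only trigger vacuous removals, correctly disposes of the one genuine loophole (strictly speaking that observation requires an induction along the inference chain, since a query may be inferred from other inferred queries, but the induction goes through without difficulty).
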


In any case, the adjusted \query{} routine always produces a learning space (even in case it reached Case~(ii) above and therefore exited execution).  We will not go into the details of the implementation of the adjusted \query{} routine, nor report any comparison of its performances with those of the adapted \query{} routine.

\section[Some Applications---The \subjindex{\aleks}\aleks{} System]{Some Applications---The \aleks{} System
\sectionmark{Some Applications---The \aleks{} System}}
\sectionmark{Some Applications---The \aleks{} System}
\label{applications}
In psychometrics, the concepts of ``validity'' and ``reliability'' are distinct\footnote{See \authindex{Anastasi, A.}\authindex{Urbina, S.}\cite{Anastasi_1997} for explanations.} 
and dealt with separately, which is justified because a psychometric test is a measurement instrument which is not automatically predictive of a criterion.  In the applications of Learning Space Theory (LST) such as \subjindex{\aleks}\aleks, however, the items 
potentially\footnote{Potentially, in the sense that any item of the domain could be part of the test.} used in any assessment are, by construction, a fully comprehensive coverage of a curriculum, typically based on the consultation of numerous standard textbooks.  This implies that if a LST-type assessment is reliable, it is presumably also valid, and vice versa  \authindex{Doignon, J.-P.}\authindex{Falmagne, J.-Cl.}\citep[cf.~Section 17.1 in][]{Falmagne_Doignon_LS}. 
  
In our introduction, we have presented LST as a more predictive, or valid, alternative to psychometrics. We describe here four analysis of data which all vindicate that position.  The material is taken from 
\authindex{Cosyn, E.}\authindex{Doble, C.W.}\authindex{Falmagne, J.-Cl.}\authindex{Lenoble, A.}\authindex{Thi\'ery, N.}\authindex{Uzun, H.}\citet{Cosyn_D_F_T_U_2013}.
However, the statistical analysis is so complex and detailed that we can only provide a summary here.
   
\subsection{The extra problem method} 
\label{extra_problem}
The first inquiry investigates whether the knowledge state uncovered at the end of an assessment is predictive of the responses to questions that were not asked. For example, there are $416$ items in the elementary school mathematics domain used in the \subjindex{\aleks}\aleks{} system, of which at most  $35$ are used in any assessment. Can we reliably 
predict the student's responses to the $381=416-35$  remaining questions? 
   
To find out, an \indexedterm{extra problem} is asked in any assessment, the response to which is not used in uncovering the state.  The authors of \authindex{Cosyn, E.}\authindex{Doble, C.W.}\authindex{Falmagne, J.-Cl.}\authindex{Lenoble, A.}\authindex{Thi\'ery, N.}\authindex{Uzun, H.}\citet{Cosyn_D_F_T_U_2013} have evaluated the correlation between the response to the extra problem---coded as $1/0$ for correct/false---and a couple of predictive indices obtained from or during the assessment. 
   
One predictive index is the dichotomic variable coding the observation that the extra problem is either in or out of the final state. Accordingly, the data takes the form of the $2$ by $2$ matrix in Table~\ref{cor_matrix}.

\begin{table}[h] 
\caption[Basic data matrix for the extra problem]
{Basic data matrix for the computation of the correlation between the cases ``in or out of the final state'' and the student's response coded as $1/0$ for correct/false. So, theoretically, $z$ stands for the number of lucky guesses, and $y$ for the number of careless errors.}
\label{cor_matrix}
\renewcommand{\arraystretch}{1.5}
\renewcommand{\tabcolsep}{4mm}
\begin{center}
\begin{tabular}{cccc}
&&\multicolumn{2}{c}{Response}\\
&& 1(correct) & 0 (false)\\
\cline{3-4}
\multirow{2}{*}{State}
& in  & $x$ & $y$\\
& out & $z$ & $w$\\
\cline{3-4}
\end{tabular}
\end{center}
\end{table} 

Because no correlation coefficient is available that would be fully adequate for the situation, two potentially useful ones were computed by Cosyn and his colleagues for these data:   the tetrachoric and the phi coefficient.  The results are given in Table~\ref{cor_matrix_bis} for the median values of the coefficients and the grouped data. These data pertain to $125{,}786$ assessments using $324$ problems out of the $370$ problems mentioned above\footnote{Forty-six problems were discarded because the relevant data were not sufficient to provide reliable estimates of the coefficients.}.  The tetrachoric values 
are in the left column of the table. The median of the distribution is around .68. The grouped data, obtained from gathering the 324 individual $2 \times 2$ matrices into one, yields a higher correlation of about .80. These are high values, but the tetrachoric coefficient is regarded as biased upward \authindex{Greer, T.}\authindex{Dunlap, W.P.}\authindex{Beaty, G.O.}\citep[however, see][]{greer03}. The right column of the table contains similar values for the phi coefficient. These values are much lower, yielding a median of .43 (contrasting with the .68 value obtained for the tetrachoric) and a grouped data value of .58 (instead of .80).
  
\begin{table}[h]
\caption[Values of correlation coefficients (extra problem)]{Values of the coefficients correlating the $1/0$ variables coding the in/out of the final state and the correct/false response in the two cases: median value of the correlation distribution and grouped data.}
\label{cor_matrix_bis}
\renewcommand{\arraystretch}{1.5}
\renewcommand{\tabcolsep}{5mm}
\begin{center}
\begin{tabular}{r|cc|}
\multicolumn{1}{c}{}&tetrachoric&\multicolumn{1}{c}{phi}\\
\cline{2-3}
median&$.68$& $.43$\\
grouped data&$.80$&  $.58$\\
\cline{2-3}
\end{tabular}
\end{center}
\end{table} 

\def\rba{{\rm\bf a}}
\subsection{Correcting for careless errors} 
The relatively low correlation values obtained for the phi coefficient in the above analysis may be due in part to the occurrence of careless errors. However, the basic data matrix of Table~\ref{cor_matrix_bis} may be revised to include such careless errors. Instead of the dichotomic \textsl{in/out (of the final state)} variable, \authindex{Cosyn, E.}\authindex{Doble, C.W.}\authindex{Falmagne, J.-Cl.}\authindex{Lenoble, A.}\authindex{Thi\'ery, N.}\authindex{Uzun, H.}\citet{Cosyn_D_F_T_U_2013} define the new variable 
\begin{equation}
\mathbf S_{\rba} = \begin{cases} 1 - \epsilon_{\rba} &\text{if the final state  contains the extra question $\rba$,}\\
0&\text{otherwise,}
\end{cases}
\end{equation}
in which $\epsilon_{\rba}$ stands for the probability of committing a careless error to item~$\rba$.  
So, if the extra question $\rba$ is contained in the uncovered state, $\mathbf S_{\rba}$ is the probability of not committing a careless error to item~$\rba$. The careless errors probabilities $\epsilon_{\rba}$ were estimated from those cases in which, by chance, the same item~$\rba$ appears twice in an assessment, once as the extra problem and the other as one of the other items, and moreover, the response in at least one of the two instances of $\rba$ is correct. The relative frequency of cases in which a false response is given to the other instance of $\rba$ provides an estimate of $\epsilon_\rba$.  

The variable $\mathbf S_{\rba}$ is neither exactly continuous nor exactly discrete\footnote{For example, the distribution of $\mathbf S_{\rba}$ vanishes in a positive neighborhood of $0$, but is positive at the point $0$ itself.}. Nevertheless, for the purpose of comparison with similar analyses performed in psychometric situations, the authors have used the point biserial coefficient $r_{pbis}$ to compute the correlation between the variables $\mathbf S_{\rba} $ and the 1/0 variable coding the correct/incorrect responses to the extra problem.   The value reported for $r_{pbis}$ was $.67$, noticeably higher than the $.58$ obtained for the phi coefficient for the same grouped data. 

The authors compare this $.67$ value for the point biserial coefficient with those disclosed for the same coefficient in the \citet{ETSfeb08} report\footnote{Produced for the California Department of Education (Test and Assessment Division). See {\tt http://www.cde.ca.gov/ta/tg/sr/documents/csttechrpt07.pdf}.} for the Algebra I California Standards Test (CST), which covers approximately the same curriculum as the \subjindex{\aleks}\aleks{} assessment for elementary algebra and is given to more than 100\,000 students each year. 
This test consists of 65 multiple choice questions (items) and is built and scored in the framework of Item Response Theory (IRT), for which the point biserial coefficient is a standard measure.  In particular, for each of the 65 items, a point biserial item-test correlation was computed, which measured the relationship between the dichotomous variable giving the $1/0$ item score (correct/incorrect) and the continuous variable giving the total test score (see p.~397 of the ETS report referenced above).
For the 2007 administration of the Algebra I CST, the mean point biserial coefficient for the 65 items was $.36$, and the median was $.38$  (see Table~7.2, p.~397 of the ETS report). The minimum coefficient obtained for an item was $.10$ and the maximum was $.53$ (Table 7.A.4, pp.~408--9, of the ETS report).  The averages for preceding years on the test were similar, namely, the mean point biserial coefficients were $.38$ in 2005 and $.36$ in 2006 (see Table 10.B.3, p.~553, of the same report)\footnote{These low correlation values were obtained even though some items with low item test correlations  were removed from the test in a preliminary analysis.}. 

\subsection{Learning readiness}
\label{outerfringe_success} 
A student using the \subjindex{\aleks}\aleks{} system is given regular assessments. At the end of each assessment\footnote{Except the final one, at the end of the course.}, the system is giving the student the choice of the next item to learn, such items being located in the outer fringe of the student's knowledge state\footnote{See Section~\ref{fringe_theorem} for the concept of outer fringe.}. This makes sense because the items in the outer fringe are exactly those that the student is ready to learn. So, if the validity of the assessment is high, the probability of successfully learning an item chosen in the outer fringe should also be high. \authindex{Cosyn, E.}\authindex{Doble, C.W.}\authindex{Falmagne, J.-Cl.}\authindex{Lenoble, A.}\authindex{Thi\'ery, N.}\authindex{Uzun, H.}\citet{Cosyn_D_F_T_U_2013} have estimated the probability that a student successfully learns an item chosen in the outer fringe of his or her state. For elementary school mathematics, the median of the distribution of the estimated (conditional) success probabilities was $.93$.  
This estimated was based on $1{,}940{,}473$ learning occasions. 
  
\subsection{\subjindex{\aleks}\aleks{} based placement at the University of Illinois}\label{aleks_illinois}
Until 2006, the students entering the University of Illinois had to take a mathematics placement test based on the ACT\footnote{Originally, an acronym for \textsl{American College Testing}.}.  The results were not satisfactory because many students lack an adequate preparation for the course they were advised to take and ended up withdrawing.  Beginning in 2007, the ACT was replaced by \subjindex{\aleks}\aleks.  \authindex{Ahlgren, A.}\authindex{Harper, M.}\citet{Ahlgren_Harper_2013} reports a comparison of the two situations, which we briefly sum up here \authindex{Ahlgren, A.}\authindex{Harper, M.}\citep[see also][]{Ahlgren_2011}.

The consequence of a withdrawal from a course may have dire consequences.  For example, we read in the introductory section of \authindex{Ahlgren, A.}\authindex{Harper, M.}\citet{Ahlgren_Harper_2013}:

  \begin{quote}\sl
  ``At the University of Illinois, the standard introductory calculus course (Calc: Calculus~I) is a five credit course, the withdrawal from which beyond the add-deadline may reduce students to a credit total below full-time status, resulting in the loss of tuition benefits, health benefits, scholarships, and athletic eligibility.'' 
  \end{quote} 
 
The placement program of the Department of Mathematics at the University of Illinois deal with four courses:  Preparation for Calculus (\PreCalc),  Calculus 1 (\Calc), Calculus 1 for student with experience (\CalcExp), and Business Calculus (\BusCalc).  Because the \PreCalc{} course was only offered for the first time in 2007, it is not included in the comparison statistics below.
   
The new placement exam is an \subjindex{\aleks}\aleks{} assessment focused on the module\footnote{A particular learning space used in the \subjindex{\aleks}\aleks{} system.} `\texttt{Prep for Calculus}', with some items removed that are not part of the relevant course at the University of Illinois. The students take the non-proctored assessment either at home or on campus. Students failing to reach the required score for placement in one of the four courses have  the options of taking the placement test again or using the \subjindex{\aleks}\aleks{} learning module (or some other method) to improve their results. The students can keep retaking assessments until the course add-deadline. 
  
Table~\ref{w_decrease} on page \pageref{w_decrease} gives the percentages of the  decrease in the numbers of withdrawals observed in 2007 and 2008 for the \BusCalc{}, \Calc{} and \CalcExp{} courses, in comparison with the numbers of withdrawals from the same courses in 2006.  

\begin{table}
\caption[Percentages of decrease in withdrawals from courses]{Percentages of decrease in withdrawals from three courses between Fall 2006 (before \subjindex{\aleks}\aleks) and Fall 2007, Fall 2008.}
\label{w_decrease}

\vskip2mm

\renewcommand{\arraystretch}{1.5}
\renewcommand{\tabcolsep}{.5cm}
\begin{center}
\begin{tabular}{|l|c|c|c|}
\cline{2-4}
\multicolumn{1}{c}{}&\multicolumn{3}{|c|}{Percentage of Decrease in Withdrawals}\\
\hline
Course&\BusCalc&\Calc&\CalcExp\\
\hline
2006--2007&{\scriptsize $\searrow$\,}57\%&{\scriptsize $\searrow$\,}49\%&{\scriptsize $\searrow$\,}67\%\\[2mm]
2006--2008&{\scriptsize $\searrow$\,}19\%&{\scriptsize $\searrow$\,}81\%&{\scriptsize $\searrow$\,}42\%\\
\hline
\end{tabular}
\end{center}
\end{table}
The decrease in the percentages of withdrawals is remarkable. The author also note a shift in the enrollment---not reported here---from
the least advanced course, \BusCalc, to the more advanced one, which is \CalcExp. Presumably, this may be due to the combination of two factors: the accuracy of the assessment in the \subjindex{\aleks}\aleks{} system, which may improve the placement, and 
the fact the students were given the possibility of using the system to bridge some gaps in their expertise. 
  
\section[Bibliographical Notes]{Bibliographical Notes
\sectionmark{Bibliographical Notes}}
\sectionmark{Bibliographical Notes}
\label{bibliography} 
The first paper on knowledge spaces was \authindex{Doignon, J.-P.}\authindex{Falmagne, J.-Cl.}\citet{Doignon_Falmagne_1985}, which contains some of the combinatorial basis of the theory, and in particular one of two key properties, namely, the closure under union: a knowledge space is a knowledge structure closed under union.  \authindex{Doignon, J.-P.}\citet{doignon:88} was a technical follow up. The stochastic part of the theory was presented in the two papers by \authindex{Falmagne, J.-Cl.}\citet{falmagne:88a,falmagne:88b}. The second paper also introduces a second key property, wellgradedness.  A comprehensive, non-technical description of these basic ideas is contained in \citet{falma90}. \citep[See also][for other introductory papers.]{doignon:ganter,falma89a,doignon:albert} 

Even though, taken together, the closure under union and wellgradedness form a legitimate basis of the theory, it is not obvious that these axioms are pedagogically sound.  Around 2002, Falmagne proposed a reaxiomatization in the form of the conditions labelled here as [L1] and [L2] (see Subsection~\ref{def_ls_axioms} on page \pageref{def_ls_axioms}), which he called a \textsl{learning space}. These axioms were unpublished at the time, but they were 
communicated to Eric Cosyn and Hasan Uzun, who then proved that a knowledge structure is a learning space if and only if it is a well-graded knowledge space \authindex{Cosyn, E.}\authindex{Uzun, H.}\citep[see][and our Theorem~\ref{big_equivalence} on page \pageref{big_equivalence}]{Cosyn_Uzun_2009}.  A similar result is contained in \authindex{Korte, B.}\authindex{Lov\'asz, L.}\authindex{Schrader, R.}\citet{Korte_Lovasz_Schrader_1991}.  In fact, learning spaces are exactly the same structures as antimatroids or convex geometries.  The origin of the term antimatroid (in the setting of intersection-closed families of subsets) goes back to \authindex{Jamison, R.E.}\cite{Jamison_1980} and \cite{Jamison_1982}; see also \authindex{Edelman, P.H.}\cite{Edelman_1986} for a lattice-theoretic approach, and \authindex{Edelman, P.H.}\authindex{Jamison, R.E.}\cite{Edelman_Jamison_1985}. 
 
In time, other scientists became interested in knowledge space theory, notably \authindex{Dowling, C.E.}\authindex{M\"uller, C.}Cornelia Dowling\footnote{Formerly Cornelia  M{\"u}ller; see this name also for references.}, \authindex{Heller, J.}J\"urgen Heller and \authindex{Suck, R.}Reinhard Suck in Germany, \authindex{Albert, D.}Dietrich Albert and \authindex{Hockemeyer, C.}Cord Hockemeyer in Austria, \authindex{Koppen, M.}Mathieu Koppen in The Netherlands, and \authindex{Stefanutti, L.}Luca Stefanutti in Italy. 
The literature on the subject grew and now contains several hundred titles. The data base at

\noindent\texttt{http://liinwww.ira.uka.de/bibliography/Ai/knowledge.spaces.html}

\noindent contains an extensive list of reference on knowledge spaces. It is maintained by Cord Hockemeyer at the University of Graz \authindex{Hockemeyer, C.}\citep[see also][]{hockemeyer:tr01c}.  

The monograph by \authindex{Doignon, J.-P.}\authindex{Falmagne, J.-Cl.}\citet{Doignon_Falmagne_KS} gives a comprehensive description of knowledge space theory at that time. Beginning in 1994, an internet-based educational software based on learning space theory was developed at the University of California, Irvine (UCI), by a team of software engineers including (among many others) Eric Cosyn,  Damien Lauly\authindex{Lauly, D.}, David Lemoine\authindex{Lemoine, D.} and Nicolas Thi{\'e}ry\authindex{Thi{\'e}ry, N.}, under the supervision of Falmagne.  This was supported by a large NSF grant to UCI.  The end product of this software development was called \subjindex{\aleks}\aleks, which is also the name of a company created in 1996 by Falmagne and his graduate students\footnote{In particular Cosyn, Lauly and Thi{\'e}ry. \subjindex{\aleks}\aleks{} Corporation was sold to McGraw-Hill Education in 2013.}.  Around 1999, it was thought that the educational software had matured enough for a fruitful application in the schools and universities. This led to further developments in the form of statistical analysis of the assessment data and new mathematical results. \authindex{Falmagne, J.-Cl.}\authindex{Cosyn, E.}\authindex{Doignon, J.-P.}\authindex{Thi\'ery, N.}\citet{falma06} is a not-too-technical introduction to the topic at that time. The monograph by \authindex{Doignon, J.-P.}\authindex{Falmagne, J.-Cl.}\citet[][a much expanded reedition of Doignon and Falmagne, 1999]{Falmagne_Doignon_LS} is a comprehensive and (almost) 
up-to-date technical presentation of the theory.
 
Several assessment systems are founded on Knowledge Space Theory and Learning Space Theory, the most prominent ones being \subjindex{\aleks}\aleks{} and \rath. (The \rath{} system was developed by a team of researchers at the University of Graz, in Austria; see
\authindex{Hockemeyer, C.}\citealp{hocke97} and \authindex{Held, Th.}\authindex{Albert, D.}\citealp{hockemeyer_98_calisce}).




\renewcommand{\indexname}{Index}
\printindex 

\end{document}